\numberwithin{equation}{section}
\newtheorem{theorem}{Theorem}
\newtheorem{remark}{Remark}
\newlength{\drop}
\definecolor{burgundy}{rgb}{0.5, 0.0, 0.13}
\title{Mechanics-based solution 
verification for porous media models
}
\author{\textbf{M.~Shabouei} and \textbf{K.~B.~Nakshatrala} \\ 
{\small Department of Civil \& Environmental Engineering, 
University of Houston. \\
Correspondence to:~\textsf{knakshatrala@uh.edu}}}
\begin{document}


\begin{titlepage}
    \drop=0.1\textheight
    \centering
    \vspace*{\baselineskip}
    \rule{\textwidth}{1.6pt}\vspace*{-\baselineskip}\vspace*{2pt}
    \rule{\textwidth}{0.4pt}\\[\baselineskip]
    {\LARGE \textbf{\color{burgundy} Mechanics-based 
    solution verification for porous media models}}\\[0.3\baselineskip]
    \rule{\textwidth}{0.4pt}\vspace*{-\baselineskip}\vspace{3.2pt}
    \rule{\textwidth}{1.6pt}\\[\baselineskip]
    \scshape
    An e-print of the paper is available on arXiv:~1501.02581. \par
    
    \vspace*{1\baselineskip}
    Authored by \\[\baselineskip]
    
    {\Large M.~Shabouei\par}
    {\itshape Graduate Student, University of Houston.}\\[\baselineskip]
    
    {\Large K.~B.~Nakshatrala\par}
    {\itshape Department of Civil \& Environmental Engineering \\
    University of Houston, Houston, Texas 77204--4003. \\ 
    \textbf{phone:} +1-713-743-4418, \textbf{e-mail:} knakshatrala@uh.edu \\
    \textbf{website:} http://www.cive.uh.edu/faculty/nakshatrala\par}
    \vspace*{\baselineskip}
    \begin{figure*}[h]
      \subfigure{
        \includegraphics[scale=.3,clip]{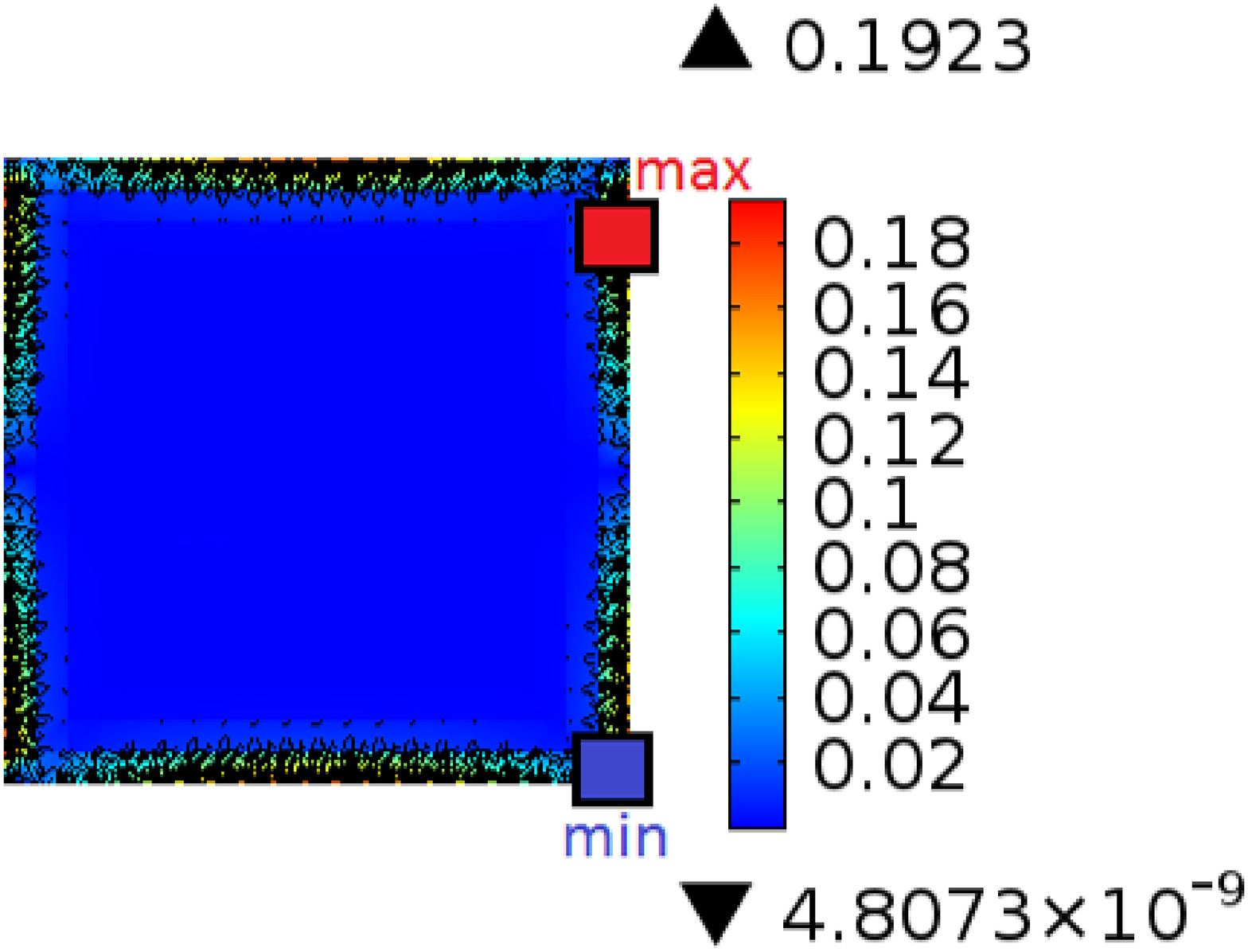}}
      \hspace{1 cm}
      \subfigure{
        \includegraphics[scale=.292,clip]{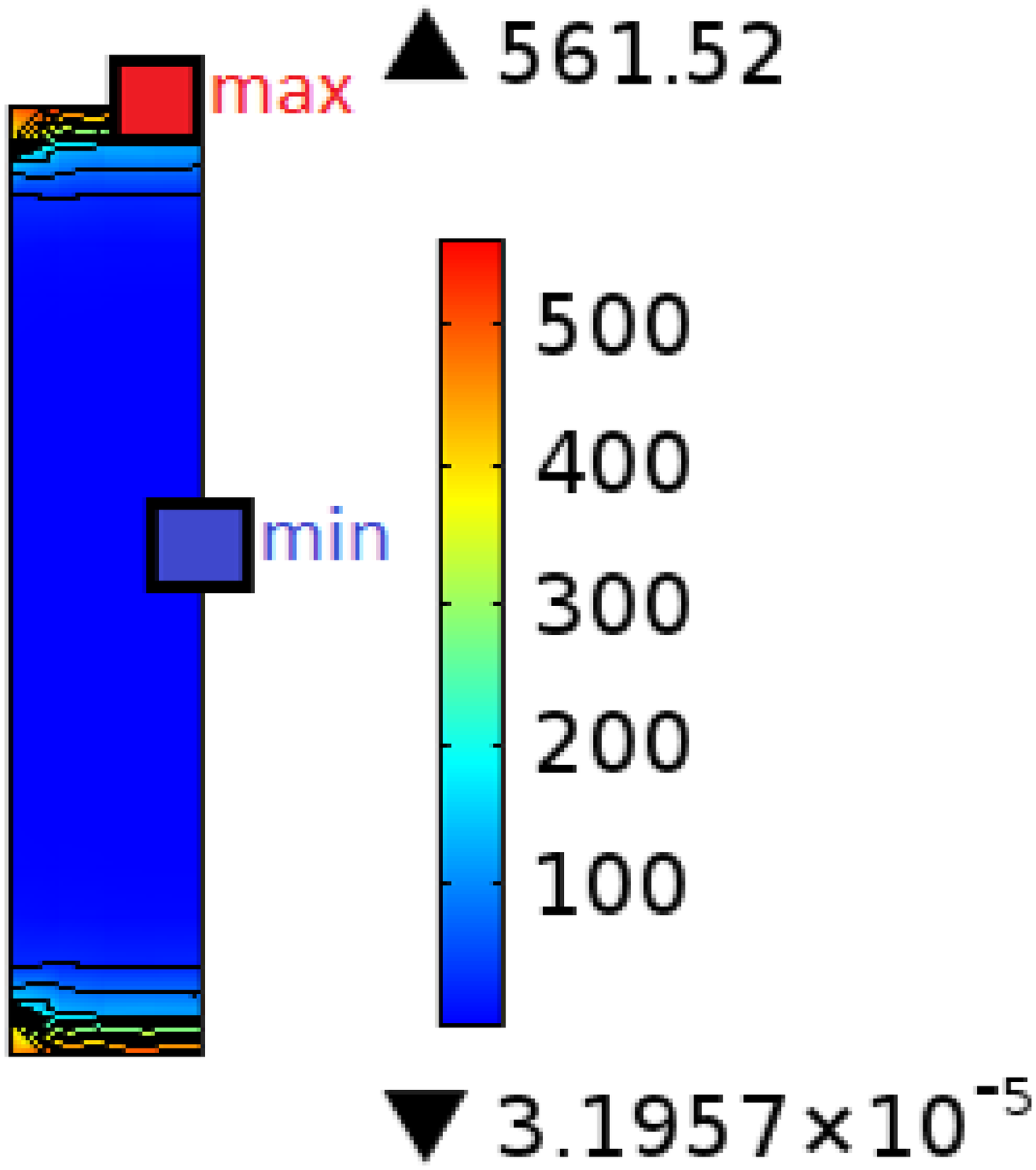}}
    \end{figure*}
        {\small \emph{This figure illustrates that the 
            vorticity under Darcy-Brinkman equations 
	    satisfies the classical maximum principle, 
	    which can serve as a good \emph{a posteriori} 
            criterion.}}
    
    \vfill
    {\scshape 2016} \\
    {\small Computational \& Applied Mechanics Laboratory} \par
  \end{titlepage}

\begin{abstract}
  This paper presents 
  a new approach to verify accuracy of 
  computational simulations. We develop 
  mathematical theorems which can serve 
  as robust \emph{a posteriori} error 
  estimation techniques to identify 
  numerical pollution, check the performance 
  of adaptive meshes, and verify numerical 
  solutions. We demonstrate performance of 
  this methodology on problems from flow 
  thorough porous media. However, one can 
  extend it to other models. We construct 
  mathematical properties such that the 
  solutions to Darcy and Darcy-Brinkman 
  equations satisfy them. 
  The mathematical properties include the 
  total minimum mechanical power, minimum 
  dissipation theorem, reciprocal relation, 
  and maximum principle for the vorticity. 
  All the developed theorems 
  have firm mechanical bases and are 
  independent of numerical methods. So, these 
  can be utilized for solution verification 
  of finite element, finite volume, finite 
  difference, lattice Boltzmann methods and 
  so forth.
  In particular, we show that, for a given set 
  of boundary conditions, Darcy velocity has 
  the minimum total mechanical power of all 
  the kinematically admissible vector fields. 
  We also show that a similar result holds 
  for Darcy-Brinkman velocity. 
  We then show for a conservative body force, 
  the Darcy and Darcy-Brinkman velocities 
  have the minimum total dissipation among their 
  respective kinematically admissible vector fields. 
  Using numerical examples, we show that the minimum 
  dissipation and total mechanical power theorems 
  can be utilized to identify 
  pollution errors in numerical solutions. 
  The solutions to Darcy and Darcy-Brinkman 
  equations are shown to satisfy a reciprocal 
  relation, which has the potential to identify 
  errors in the numerical implementation of 
  boundary conditions. 
  It is also shown that the vorticity under both 
  steady and transient Darcy-Brinkman equations 
  satisfy maximum principles if the body force 
  is conservative and the permeability 
  is homogeneous and isotropic. A discussion on 
  the nature of vorticity under steady and transient 
  Darcy equations is also presented. 
  Using several numerical examples, we will demonstrate 
  the predictive capabilities of the proposed \emph{a 
  posteriori} techniques in assessing the accuracy 
  of numerical solutions for a general class of problems, 
  which could involve complex domains and general 
  computational grids. 
\end{abstract}
\keywords{accuracy assessment; validation and 
  verification (V\&V); Darcy model; Darcy-Brinkman 
  model; mechanical dissipation; reciprocal relations; 
  vorticity; maximum principles.}

\maketitle


\section{INTRODUCTION}
\label{Sec:MDarcy_Introduction}
%
\subsection{Validation and verification (V\&V)}
Errors can arise in both physical modeling and 
numerical simulation. The study of errors due 
to physical modeling is referred to as validation, 
and the study of error in a numerical simulation 
is referred to as verification. As Blottner 
\citep{Blottner_1990_v27_p113_122_J_spc_rock} 
nicely puts it, validation is to solve ``\emph{right 
governing equations}'' and verification is 
to solve ``\emph{governing equation right}''.
Validation errors arise when a model is used 
out of its application range. 
%
The errors in the verification, on the other 
hand, can arise from three broad sources including 
numerical errors, round-off errors (due to the 
finite precision arithmetic), and programming 
mistakes \citep{WL_Oberkampf_VV_v57_p345_2004_Appl_mech_rev}.
Basically, the verification is to ensure that 
the code produces a solution with some degree 
of accuracy, and the numerical solution is 
consistent. Verification itself is conducted 
into two modes: verification of code and 
verification of calculation 
\citep{Roache_Verification_and_Validation, 
rider_2015_Verification_JCP}. 
%
Verification of code addresses the question 
of whether the numerical algorithms have 
been programmed and implemented correctly 
in the code. The two currently popular 
approaches to verify a code are the 
\emph{method of exact solutions} (MES) and 
the \emph{method of manufactured solutions} 
(MMS). 
%
%
More thorough discussions on MES and MMS can be 
found in \citep{Verification_computer_codes_Knupp_2003, 
CJ_Roy_IJNMF_v2004_p599, 
Roache_Verification_and_Validation}.

Verification of calculation (which is also 
referred to as solution verification) estimates 
the overall magnitude (not just the order) 
of the numerical errors in a calculation, 
and the procedure invariably involves 
\emph{a posteriori} error estimation 
\citep{Code_Verification_by_MMS_SANDIA}. 
The numerical errors in the solution 
verification can arise from two different 
sources including discretization errors 
and solution errors. The discretization 
errors refer to all the errors caused by 
conversion of the governing equations 
(PDEs and boundary conditions) into discrete 
algebraic equations whereas the solution 
errors refer to the errors in approximate 
solution of the discrete equations. The 
numerical errors may arise from insufficient 
mesh resolution, improper selection of 
time-step, and incomplete iterative 
convergence. For more details on 
verification of calculation, see 
\citep{CJ_Roy_IJNMF_v2004_p599, 
Code_Verification_by_MMS_SANDIA,
Roache_uncertainty_CFD_AnnRev,
Roache_Verification_and_Validation, 
I_Babuska_VV_v193_p4057_2004_CMAME,
WL_Oberkampf_AIAA_v36_p687, 
WL_Oberkampf_VV_v57_p345_2004_Appl_mech_rev, 
rider_2015_Verification_JCP}.

%
\subsection{\emph{A posteriori} techniques}
The aim of \emph{a posteriori} error estimation 
is to assess the accuracy of the numerical 
approximation in the terms of \emph{known} 
quantities such as geometrical properties 
of computational grid, the input data, and 
the numerical solution. \emph{A posteriori} 
error techniques monitor various forms of 
the error in the numerical solution such 
as velocity, stress, mean fluxes, and drag 
and lift coefficients 
\citep{Becker_e_esti_2001_Acta_Num}. 
Such error estimation differ from \emph{a priori} 
error estimates in that the error controlling 
parameters depend on \emph{unknown} quantities. 
\emph{A priori} error estimation investigates 
the stability and convergence of a solver and 
can give rough information on the asymptotic 
behavior of errors in calculations when grid 
parameters are changed appropriately 
\citep{Ainsworth_e_esti_FEM_1997_CMAME}.

Moreover, discontinuities 
and singularities commonly occur in fluid 
dynamics, solid mechanics, and structural 
dynamics and it is a subject of intense 
research and grave concern in error estimation 
\citep{WL_Oberkampf_VV_v57_p345_2004_Appl_mech_rev}. 
Some representative works on pollution errors 
due to singularities and discontinuities are 
\citep{Babuska_Pollution_problem_v3_p553_1987_CANM,
Babuska_Pollution_adaptive_control_v38_p4207_1995_IJNME,
Babuska_error_estimation_v40_p3883_1997_IJNME,
Oden_Pollution_error_v27_p33_1998_IJNMF,
Roache_Verification_and_Validation,
Botella_singular_solutions_v36_p125_2001_IJNMF}. 
%
%

The question that still remains is how to 
verify the accuracy of numerical solutions, 
especially for realistic problems. 
%
%
Below are some specific challenges  that a 
computational scientist may face in using 
numerical simulators:
\begin{enumerate}[(i)]
\item How much mesh refinement is required 
to obtain solutions with desired degree of 
accuracy for a problem that does not have 
an analytical or reference solution?
\item Will the chosen mesh be able to resolve 
  singularities in the solution and avoid 
  pollution errors? That is, 
  can we identify whether a particular type 
  of mesh suffers from pollution errors for 
  a problem with singularities?
\item Has the computer implementation been 
done properly?
\item Is the chosen numerical formulation 
accurate/appropriate for the chosen problem?
\end{enumerate}
In the literature, one finds the usual 
approach of employing tools from functional 
analysis to obtain \emph{a priori} estimates 
and assess stability. For example, see 
\citep{Brezzi_Fortin,I_Babuska_FEM_2001}. 
On the other hand, this paper aims to address 
the aforementioned challenges by providing 
various \emph{a posteriori} techniques with 
firm mechanics underpinning. 
%
%

In the current study, we develop new methodology 
to verify the accuracy and convergence of 
numerical solutions. In addition, the presented 
\emph{a posteriori} error estimation approach 
is able to identify pollution in computational 
domains and check whether adaptive grids can 
resolve the numerical pollution. The proposed 
criteria is independent of employed numerical 
method. It means, our technique is able to 
verify the solution of all numerical methods 
including finite element, finite volume, finite 
difference and lattice Boltzmann. Hence, we 
named it mechanics-based solution verification. 
Then, we show performance of this powerful tool 
on problems from flow thorough porous solid. 
But, it is not restricted to porous media models 
and it is extendable to other problems. 


%
\subsection{Popular porous media models}
%
%
The simplest and yet the most popular model 
that describes the flow of an incompressible 
fluid in a rigid porous media is the Darcy 
model \citep{Darcy_1856}. 
%
%
The Darcy equations posed several challenges 
to the computational community but played a 
crucial role in the development of mixed and 
stabilized formulations in finite element method 
(FEM) \citep{Masoud_mixed_Darcy_v191_p4341_2002_CMAME,
Nakshatrala_mixed_Darcy_v195_p4036_2006_CMAME}. 
%
%
Brinkman \citep{Brinkman_ASR_1947_vA1_p27, 
Brinkman_ASR_1947_vA1_p81} proposed an 
extension to the Darcy model which is commonly 
referred to as the Darcy-Brinkman model. 
In addition to drag between the fluid and 
porous solid, Darcy-Brinkman model accounts 
for viscous term (friction 
between fluid layers). 
%
It is, however, not possible to obtain 
analytical solutions under these models, and 
one commonly seeks numerical solutions for 
realistic problems. 
%

Since the aim of this paper is \emph{a 
posteriori} error estimation, we assume 
that the code has already been verified 
for the Darcy and Darcy-Brinkman class of 
problems so that programming mistakes are 
not an issue. Likewise, we are not also 
concerned with validation. It means that 
the Darcy and Darcy-Brinkman models are 
physically adequate to model the problems.
Moreover, the solution verification requires 
confirmation of grid convergence which is 
one of the most common and reliable error 
estimation methods 
\citep{Roache_uncertainty_CFD_AnnRev}. 
Similar to grid convergence studies, we 
address only solution verification. 
\subsection{Organization of the paper}
Section \ref{Sec:S2_Brinkman_GE} presents 
the governing equations arising from the 
Darcy and Darcy-Brinkman models. In Section 
\ref{Sec:S3_Brinkman_Theorems}, we propose 
various mathematical properties that the 
solutions to these governing equations 
satisfy. We also discuss how these 
properties can be utilized as robust 
\emph{a posteriori} criteria to assess 
the accuracy of numerical solutions.
Section \ref{Sec:S4_Brinkman_NR} presents 
several  steady-state numerical results 
to illustrate the predictive capabilities 
of the proposed \emph{a posteriori} criteria 
with respect to singularities, pollution 
errors, and discretization errors in the 
implementation of (Neumann) boundary 
conditions.
In Section \ref{Sec:S5_Brinkman_Marmousi}, 
we utilize synthetic reservoir data to 
demonstrate the usefulness of the proposed 
\emph{a posteriori} techniques, especially 
for problems involving spatially heterogeneous 
permeability properties.  
Section \ref{Sec:S6_Brinkman_Transient} 
discusses \emph{a posteriori} criteria 
for transient problems, and presents 
representative numerical results in 
support of the theoretical predictions. 
Finally, conclusions are drawn in Section 
\ref{Sec:S8_Brinkman_Closure}.
\section{DARCY AND DARCY-BRINKMAN MODELS}
\label{Sec:S2_Brinkman_GE}
Let $\Omega \subset \mathbb{R}^{nd}$ be an open and 
bounded domain, where ``$nd$'' denotes the number of 
spatial dimensions. We shall denote the set closure 
of $\Omega$ by $\overline{\Omega}$. Let $\partial 
\Omega := \overline{\Omega} - \Omega$ denote the 
boundary, which is assumed to be piecewise smooth. 
A spatial point in $\overline{\Omega}$ is denoted 
by $\mathbf{x}$. The spatial gradient and divergence 
operators are, respectively, denoted as $\mathrm{grad}
[\cdot]$ and $\mathrm{div}[\cdot]$. Let $\mathbf{v}:~\Omega 
\rightarrow \mathbb{R}^{nd}$ denote the velocity field and 
$p : \Omega \rightarrow \mathbb{R}$ denote the pressure 
field.
The symmetric part of the gradient of velocity is 
denoted by $\mathbf{D}(\mathbf{x})$. That is, 
\begin{align}
  \mathbf{D}(\mathbf{x}) = \frac{1}{2} \left(\mathrm{grad}
         [\mathbf{v}] + \mathrm{grad}[\mathbf{v}]^{\mathrm{T}}
         \right)
\end{align}
The unit outward normal to the boundary is denoted 
as $\widehat{\mathbf{n}}(\mathbf{x})$. The boundary 
is divided into two parts: $\Gamma^{v}$ and $\Gamma^{t}$. 
$\Gamma^{v}$ is the part of the boundary on which the 
velocity is prescribed, and $\Gamma^{t}$ is that part 
of the boundary on which the traction is prescribed. 
For mathematical well-posedness, we have $\Gamma^{v} 
\cap \Gamma^{t} = \emptyset$ and $\Gamma^{v} \cup 
\Gamma^{t} = \partial \Omega$. 

The porous media models that will be considered in 
this paper are the Darcy  and Darcy-Brinkman models. 
Both these models describe the flow of an incompressible 
fluid through \emph{rigid} porous media. We completely 
neglect the motion of the porous solid. The Cauchy stress 
in the Darcy and Darcy-Brinkman models, respectively, take 
the following form:
\begin{subequations}
  \label{Eqn:Cauchy_stresses}
  \begin{align} 
    \label{Eqn:Darcy_Cauchy_stress}
    &\mathbf{T}(\mathbf{x}) = - p(\mathbf{x})\mathbf{I} \\
    \label{Eqn:Brinkman_Cauchy_stress}
    &\mathbf{T}(\mathbf{x}) = - p(\mathbf{x}) 
    \mathbf{I} + 2 \mu \mathbf{D}(\mathbf{x})
  \end{align}
\end{subequations}
where $\mathbf{I}$ denotes the second-order 
identity tensor, and $\mu$ is the dynamic 
coefficient of viscosity. The steady-state 
governing equations based on the Darcy model 
can be written as follows:
\begin{subequations}
  \label{Eqn:Darcy_GE}
  \begin{alignat}{2}
    \label{Eqn:Darcy_LM}
    &\alpha (\mathbf{x}) \mathbf{v}(\mathbf{x}) 
    + \mathrm{grad}[p(\mathbf{x})]  
    = \rho \mathbf{b}(\mathbf{x}) \quad &&\mbox{in} 
    \; \Omega \\    
    \label{Eqn:Darcy_Continuity}
    &\mathrm{div}[\mathbf{v}(\mathbf{x})] = 0 \quad 
    &&\mbox{in} \; \Omega \\
    \label{Eqn:Darcy_velocity_BC}
    &\mathbf{v}(\mathbf{x}) \cdot 
    \widehat{\mathbf{n}}(\mathbf{x}) 
    = v_n(\mathbf{x}) 
    \quad &&\mbox{on} \; \Gamma^{v} \\
    \label{Eqn:Darcy_traction_BC}
    &p(\mathbf{x}) = p_0(\mathbf{x}) 
    \quad &&\mbox{on} \; \Gamma^{t} 
  \end{alignat}
\end{subequations}
where $\alpha(\mathbf{x})$ is the drag coefficient, $\rho$ 
is the density of the fluid, $\mathbf{b}(\mathbf{x})$ is 
the specific body force, $v_n(\mathbf{x})$ is the prescribed 
normal component of the velocity, and $p_0(\mathbf{x})$ is 
the prescribed pressure.
The steady-state governing equations based on the 
Darcy-Brinkman model take the following form: 
\begin{subequations}
  \label{Eqn:Brinkman_GE}
  \begin{alignat}{2}
    \label{Eqn:Brinkman_LM}
    &\alpha (\mathbf{x}) \mathbf{v}(\mathbf{x}) 
    + \mathrm{grad}[p(\mathbf{x})] - \mathrm{div}
    \left[2 \mu \mathbf{D}\right] 
    = \rho \mathbf{b}(\mathbf{x}) \quad &&\mbox{in} 
    \; \Omega \\
    \label{Eqn:Brinkman_Continuity}
    &\mathrm{div}[\mathbf{v}(\mathbf{x})] = 0 \quad 
    &&\mbox{in} \; \Omega \\
    \label{Eqn:Brinkman_velocity_BC}
    &\mathbf{v}(\mathbf{x}) = \mathbf{v}^{\mathrm{p}}(\mathbf{x}) 
    \quad &&\mbox{on} \; \Gamma^{v} \\
    \label{Eqn:Brinkman_traction_BC}
    &\mathbf{T} 
    \mathbf{\widehat{n}}(\mathbf{x}) = \mathbf{t}^{\mathrm{p}}
    (\mathbf{x}) \quad &&\mbox{on} \; \Gamma^{t} 
  \end{alignat}
\end{subequations}
where $\mathbf{v}^{\mathrm{p}}(\mathbf{x})$ is the prescribed 
velocity vector, and $\mathbf{t}^{\mathrm{p}}(\mathbf{x})$ 
is the prescribed traction. We shall call a vector-field 
to be \emph{Darcy velocity} if satisfies equations 
\eqref{Eqn:Darcy_LM}--\eqref{Eqn:Darcy_traction_BC}. 
We shall call a vector field to be \emph{Darcy-Brinkman 
velocity} if it satisfies equations 
\eqref{Eqn:Brinkman_LM}--\eqref{Eqn:Brinkman_traction_BC}.

Equations \eqref{Eqn:Darcy_LM} and \eqref{Eqn:Brinkman_LM} 
can be obtained from the balance of linear momentum under 
the mathematical framework offered by the theory of 
interacting continua 
\citep{Nakshatrala_Rajagopal_IJNMF_2011_v67_p342}. 
The drag term $\alpha(\mathbf{x})\mathbf{v}(\mathbf{x})$ 
models the frictional force between the fluid and the 
porous solid. The term $\mathrm{div}[2 \mu \mathbf{D}]$ 
in the Darcy-Brinkman model arises due to the internal 
friction between the layers of the fluid. The pressure 
$p(\mathbf{x})$ is an undetermined multiplier that 
arises due to the enforcement of the incompressibility 
constraint given by equations \eqref{Eqn:Darcy_Continuity} 
and \eqref{Eqn:Brinkman_Continuity}. The drag coefficient 
is related to the coefficient of viscosity of the fluid 
and the permeability $k(\mathbf{x})$ as follows: 
\begin{align}
  \label{Eqn:Brinkman_alpha_mu}
  \alpha(\mathbf{x}) = \frac{\mu}{k(\mathbf{x})} 
\end{align}

In general, it is not possible to obtain analytical 
solutions to the systems of equations given by either 
\eqref{Eqn:Darcy_LM}--\eqref{Eqn:Darcy_traction_BC} or 
\eqref{Eqn:Brinkman_LM}--\eqref{Eqn:Brinkman_traction_BC}. 
Hence, one needs to resort to numerical solutions. 
\emph{This paper does not concern with developing new 
  numerical formulations to solve the aforementioned 
  mathematical models. The paper instead focuses on 
  deriving mathematical properties with firm mechanics 
  underpinning that the solutions to these mathematical 
  models satisfy. We shall then illustrate how these 
  mathematical properties can serve as robust ``a 
  posteriori'' criteria to assess the accuracy of 
  numerical solutions.}

\section{MATHEMATICAL PROPERTIES:~STATEMENTS AND DERIVATIONS}
\label{Sec:S3_Brinkman_Theorems}
In the remainder of this paper, we shall refer to 
a vector field $\widetilde{\mathbf{v}}:~\Omega 
\rightarrow \mathbb{R}^{nd}$ as \emph{kinematically 
admissible} if it satisfies the following conditions:
\begin{enumerate}[(i)]
\item $\widetilde{\mathbf{v}}(\mathbf{x})$ is solenoidal  
  (i.e., $\mathrm{div}[\widetilde{\mathbf{v}}(\mathbf{x})] 
  = 0 \; \mathrm{in} \; \Omega$), and 
\item $\widetilde{\mathbf{v}}(\mathbf{x})$ 
  satisfies the boundary conditions.
\end{enumerate}
It needs to emphasized that a kinematically 
admissible vector field need not satisfy the 
balance of linear momentum given by equation 
\eqref{Eqn:Darcy_LM} or \eqref{Eqn:Brinkman_LM}. 
Clearly, the Darcy velocity and the Darcy-Brinkman 
velocity are kinematically admissible vector fields. 
For some of the results presented in this paper, we 
will need the body force to be conservative, which is a 
common terminology in potential theory \citep{Kellogg}. 
The body force $\rho \mathbf{b}(\mathbf{x})$ is said 
to be conservative if there exists a scalar potential 
$\psi(\mathbf{x})$ such that $\rho \mathbf{b}(\mathbf{x}) 
= -\mathrm{grad}[\psi]$. We shall define the dissipation 
functional as follows:
\begin{align}
  \Phi[\mathbf{v}] := \left\{\begin{array}{ll} 
  \int_{\Omega} \alpha(\mathbf{x}) \mathbf{v}(\mathbf{x}) 
  \cdot \mathbf{v}(\mathbf{x}) \; \mathrm{d} \Omega & 
  \mbox{Darcy model} \\
  \int_{\Omega} \alpha(\mathbf{x}) 
  \mathbf{v}(\mathbf{x}) \cdot \mathbf{v}(\mathbf{x}) 
  \; \mathrm{d} \Omega + \int_{\Omega} 2 \mu 
  \mathbf{D}(\mathbf{x}) \cdot \mathbf{D}(\mathbf{x}) 
  \; \mathrm{d} \Omega & \mbox{Darcy-Brinkman model} 
  \end{array} \right.
\end{align}
%
  Since $\alpha > 0$ and $\mu > 0$, it is straightforward 
  to check that $\Phi[\mathbf{v}]$ is a norm. In fact, it 
  can be shown that $\Phi[\mathbf{v}]$ under the Darcy 
  model is \emph{equivalent} to the natural norm in 
  $(L^{2}({\Omega}))^{nd}$, where $(L^{2}(\Omega))^{nd}$ 
  is a space of square integrable vector fields defined 
  from $\Omega$ to $\mathbb{R}^{nd}$. Similarly, it 
  can be shown that $\Phi[\mathbf{v}]$ under the 
  Darcy-Brinkman model is equivalent to the natural 
  norm in $(H^{1}(\Omega))^{nd}$, which is a Sobolev 
  space. For further details on function spaces and 
  norms, refer to \citep{Evans_PDE}.

In this section, we shall present four important 
mathematical properties that the solutions to Darcy 
equations and Darcy-Brinkman equations satisfy. These 
properties will be referred to as (i) the minimum total 
mechanical power theorem, (ii) the minimum dissipation 
theorem, (iii) reciprocal relation, and (iv) maximum 
principle for vorticity. As a passing comment, we will 
employ the minimum dissipation theorem to show the 
uniqueness of solution for Darcy equations and 
Darcy-Brinkman equations. 
In the porous media literature, these results have neither 
been discussed nor utilized to solve problems. More 
importantly, these results have not been used to assess 
the accuracy and convergence of numerical solutions of 
porous media models. For example, it will be shown that 
the minimum total mechanical power theorem can be utilized 
to assess the accuracy of the implementation of both 
Dirichlet and Neumann boundary conditions. On the 
other hand, the minimum dissipation theorem can be 
utilized to identify pollution errors in the numerical 
solution. Herein, we give detailed mathematical 
proofs for Darcy-Brinkman equations. We however 
provide comments on the corresponding proofs for 
Darcy equations. 

\begin{theorem}[\textsf{Minimum total mechanical power theorem}]
  \label{Theorem:Minimum_total_mechanical_power}
  Let $\mathbf{v}(\mathbf{x})$ be the Darcy-Brinkman 
  velocity vector field. Then, any kinematically admissible 
  vector field $\widetilde{\mathbf{v}}(\mathbf{x})$ 
  satisfies the following inequality:
  \begin{align}
    \varepsilon_{\mathrm{TMP}}[\mathbf{v}] \leq 
    \varepsilon_{\mathrm{TMP}}[\widetilde{\mathbf{v}}] 
  \end{align}
  where 
  \begin{align}
    \label{Eqn:Brinkman_TMP}
    \varepsilon_{\mathrm{TMP}}[\mathbf{z}] 
    := \frac{1}{2} \Phi[\mathbf{z}]
    - \int_{\Omega} \rho \mathbf{b}(\mathbf{x}) \cdot 
    \mathbf{z}(\mathbf{x}) \; \mathrm{d} \Omega
    - \int_{\Gamma^{t}} \mathbf{t}^{\mathrm{p}}(\mathbf{x}) 
    \cdot  \mathbf{z}(\mathbf{x}) \; \mathrm{d} \Gamma   
  \end{align}
  That is, for given boundary conditions, body force 
  and tractions; the Darcy-Brinkman velocity will 
  have the minimum total mechanical power among all 
  the possible kinematically admissible vector fields.
\end{theorem}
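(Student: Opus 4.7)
The plan is to use the standard direct-method trick from variational calculus: write any kinematically admissible field as $\widetilde{\mathbf{v}} = \mathbf{v} + \mathbf{w}$, where $\mathbf{v}$ is the Darcy-Brinkman velocity and $\mathbf{w} := \widetilde{\mathbf{v}} - \mathbf{v}$. Because both $\widetilde{\mathbf{v}}$ and $\mathbf{v}$ are solenoidal and both satisfy the prescribed Dirichlet data on $\Gamma^{v}$, the perturbation $\mathbf{w}$ is itself solenoidal and vanishes on $\Gamma^{v}$. These two features are what make $\mathbf{w}$ a legitimate ``test field'' against which the linear momentum balance can be tested.

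Next I would expand $\varepsilon_{\mathrm{TMP}}[\mathbf{v}+\mathbf{w}]$. Since $\Phi[\cdot]$ is the quadratic form associated with the bilinear form
\begin{align*}
B(\mathbf{u},\mathbf{w}) = \int_{\Omega} \alpha(\mathbf{x})\,\mathbf{u}\cdot\mathbf{w}\;\mathrm{d}\Omega + \int_{\Omega} 2\mu\,\mathbf{D}(\mathbf{u})\cdot\mathbf{D}(\mathbf{w})\;\mathrm{d}\Omega,
\end{align*}
a direct expansion gives
\begin{align*}
\varepsilon_{\mathrm{TMP}}[\widetilde{\mathbf{v}}] - \varepsilon_{\mathrm{TMP}}[\mathbf{v}] = B(\mathbf{v},\mathbf{w}) - \int_{\Omega} \rho\mathbf{b}\cdot\mathbf{w}\;\mathrm{d}\Omega - \int_{\Gamma^{t}} \mathbf{t}^{\mathrm{p}}\cdot\mathbf{w}\;\mathrm{d}\Gamma + \tfrac{1}{2}\Phi[\mathbf{w}].
\end{align*}
So the inequality will follow at once from the nonnegativity of $\Phi[\mathbf{w}]$ (which is already observed in the excerpt, since $\alpha,\mu > 0$), \emph{provided} the first three terms cancel.

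The core step, and the only nontrivial calculation, is showing that this cancellation is exactly the weak form of the Darcy-Brinkman linear momentum equation tested against $\mathbf{w}$. I would dot equation \eqref{Eqn:Brinkman_LM} with $\mathbf{w}$, integrate over $\Omega$, and apply the divergence theorem to both $\mathrm{grad}[p]\cdot\mathbf{w}$ and $-\mathrm{div}[2\mu\mathbf{D}]\cdot\mathbf{w}$. The interior contributions combine (using $\mathrm{div}[\mathbf{w}] = 0$ and symmetry of $\mathbf{D}$, so $\mathbf{D}(\mathbf{v})\cdot\mathrm{grad}[\mathbf{w}] = \mathbf{D}(\mathbf{v})\cdot\mathbf{D}(\mathbf{w})$) to give precisely $B(\mathbf{v},\mathbf{w})$. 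The boundary integrals reduce to $\int_{\partial\Omega}(\mathbf{T}\widehat{\mathbf{n}})\cdot\mathbf{w}\;\mathrm{d}\Gamma$, which collapses to $\int_{\Gamma^{t}} \mathbf{t}^{\mathrm{p}}\cdot\mathbf{w}\;\mathrm{d}\Gamma$ because $\mathbf{w}$ vanishes on $\Gamma^{v}$ and $\mathbf{T}\widehat{\mathbf{n}} = \mathbf{t}^{\mathrm{p}}$ on $\Gamma^{t}$.

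The main obstacle is keeping the boundary bookkeeping straight: one must verify that the ``mixed'' structure of the boundary (Dirichlet on $\Gamma^{v}$, traction on $\Gamma^{t}$) aligns exactly so that $\mathbf{w}\!\mid_{\Gamma^{v}} = 0$ kills the boundary contributions where $\mathbf{T}\widehat{\mathbf{n}}$ is not prescribed, and the traction boundary condition absorbs the rest. Once that identity is in place, the difference $\varepsilon_{\mathrm{TMP}}[\widetilde{\mathbf{v}}] - \varepsilon_{\mathrm{TMP}}[\mathbf{v}] = \tfrac{1}{2}\Phi[\mathbf{w}] \geq 0$ is immediate, with equality only when $\mathbf{w}\equiv 0$, which incidentally also yields uniqueness of the Darcy-Brinkman velocity as promised in the surrounding discussion. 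The proof for the Darcy model is analogous but simpler: the $2\mu\mathbf{D}$ term and its integration by parts are absent, and one only enforces the normal-component condition $\mathbf{w}\cdot\widehat{\mathbf{n}} = 0$ on $\Gamma^{v}$, which suffices to eliminate the pressure boundary integral because $p = p_{0}$ is prescribed on $\Gamma^{t}$.
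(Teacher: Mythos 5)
Your proposal is correct and follows essentially the same route as the paper: decompose $\widetilde{\mathbf{v}} = \mathbf{v} + \delta\mathbf{v}$ with $\delta\mathbf{v}$ solenoidal and vanishing on $\Gamma^{v}$, expand the quadratic dissipation functional, and use the momentum balance together with Green's identity (with the boundary term surviving only on $\Gamma^{t}$ where $\mathbf{T}\widehat{\mathbf{n}} = \mathbf{t}^{\mathrm{p}}$) to cancel the cross terms, leaving the nonnegative remainder $\tfrac{1}{2}\Phi[\delta\mathbf{v}]$. The only cosmetic difference is that you keep the exact identity to the end rather than dropping the quadratic term early as an inequality, which also cleanly yields the equality case.
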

%
\begin{proof}
	Let 
\begin{subequations}
	\label{Eqn:delta_v_and_D_definition}
	\begin{align}
    &\delta \mathbf{v}(\mathbf{x}) := \widetilde{\mathbf{v}}
    (\mathbf{x}) - \mathbf{v}(\mathbf{x}) \\
	&\delta \mathbf{D}(\mathbf{x}) := \widetilde{\mathbf{D}}
    (\mathbf{x}) - \mathbf{D}(\mathbf{x})
	\end{align}
\end{subequations} 
From the hypothesis of the theorem, $\delta \mathbf{v}
(\mathbf{x})$ satisfies the following relations:
\begin{subequations}    
	\label{Eqn:delta_v_hypothesis}	
	\begin{align}
 	&\delta \mathbf{v}(\mathbf{x}) = \mathbf{0} 
      \quad \forall \mathbf{x} \in \partial \Omega \\
      &\mathrm{div}[\delta \mathbf{v}] = 0 
      \quad \forall \mathbf{x} \in \Omega 
	\end{align}
\end{subequations} 
Let us start with the dissipation due to the vector 
field $\widetilde{\mathbf{v}}(\mathbf{x})$:
  \begin{align} 
	\label{Eqn:total_mechanical_power} 
    \Phi[\widetilde{\mathbf{v}}(\mathbf{x})]
    &:=\int_{\Omega} \alpha(\mathbf{x}) \widetilde{\mathbf{v}} 
    (\mathbf{x}) \cdot \widetilde{\mathbf{v}}(\mathbf{x}) \; 
    \mathrm{d} \Omega 
    +\int_{\Omega} 2 \mu \widetilde{\mathbf{D}} 
    (\mathbf{x}) \cdot \widetilde{\mathbf{D}}(\mathbf{x}) \; 
    \mathrm{d} \Omega \nonumber\\
    &= \int_{\Omega} \alpha(\mathbf{x}) \left( \delta \mathbf{v}
    (\mathbf{x}) + \mathbf{v}(\mathbf{x}) \right)
    \cdot \left( \delta \mathbf{v}(\mathbf{x}) + \mathbf{v}
    (\mathbf{x}) \right) \; \mathrm{d} \Omega 
    + \int_{\Omega} 2 \mu \left( \delta \mathbf{D}(\mathbf{x}) 
    + \mathbf{D}(\mathbf{x}) \right)
    \cdot \left( \delta \mathbf{D}(\mathbf{x}) + \mathbf{D}
    \mathbf{x}) \right) \; \mathrm{d} \Omega \nonumber \\
    &\geq 2 \int_{\Omega} \alpha(\mathbf{x}) \delta \mathbf{v}
    (\mathbf{x}) \cdot \mathbf{v}(\mathbf{x}) \; \mathrm{d} \Omega 
    + 2 \int_{\Omega} 2 \mu \delta \mathbf{D}(\mathbf{x}) 
    \cdot \mathbf{D}(\mathbf{x}) \; \mathrm{d} \Omega 
	+ \Phi{[\mathbf{v}(\mathbf{x})]}
	\end{align}
Using equations \eqref{Eqn:Brinkman_LM} and 
\eqref{Eqn:Brinkman_Cauchy_stress} the first 
integral in the above equation can be written 
as follows: 
\begin{align}
	\label{Eqn:total_dissipation_due_to_drag}
    \int_{\Omega} \alpha(\mathbf{x}) \delta \mathbf{v}(\mathbf{x}) 
    \cdot \mathbf{v}(\mathbf{x}) \; \mathrm{d} \Omega
	=\int_{\Omega} \delta \mathbf{v}
    (\mathbf{x}) \cdot \left( \rho \mathbf{b}(\mathbf{x}) +
	 \mathrm{div}\left[\mathbf{T}(\mathbf{x})\right] \right) \; \mathrm{d} \Omega
\end{align}
The symmetry of $\mathbf{D}(\mathbf{x})$ allows 
the second integral to be written as follows:
\begin{align*}
  \int_{\Omega} 2 \mu \delta \mathbf{D}(\mathbf{x}) 
  \cdot \mathbf{D}(\mathbf{x}) \; \mathrm{d} \Omega
  &=\int_{\Omega} 2 \mu \; \mathrm{grad}[\delta \mathbf{v}(\mathbf{x})] 
  \cdot \mathbf{D}(\mathbf{x}) \; \mathrm{d} \Omega \\
  &=\int_{\Omega} \; \mathrm{grad}[\delta \mathbf{v}(\mathbf{x})] 
  \cdot  \left( \mathbf{T}(\mathbf{x}) + p(\mathbf{x}) 
  \mathbf{I} \right) \; \mathrm{d} \Omega \\
  &=\int_{\Omega} \; \mathrm{grad}[\delta \mathbf{v}(\mathbf{x})] 
  \cdot  \mathbf{T}(\mathbf{x}) \; \mathrm{d} \Omega
  + \int_{\Omega} \; \mathrm{div}[\delta \mathbf{v}(\mathbf{x})] 
  \cdot  p(\mathbf{x}) \; \mathrm{d} \Omega
\end{align*}
Noting that $\mathrm{div}[\delta \mathbf{v}]=0\; 
\mathrm{in} \;\Omega$ and by employing Green's 
identity, we have the following:
\begin{align}
  \label{Eqn:total_dissipation_due_to_internal_friction}
  \int_{\Omega} 2 \mu \delta \mathbf{D}(\mathbf{x}) 
  \cdot \mathbf{D}(\mathbf{x}) \; \mathrm{d} \Omega
  &=  \int_{\Omega} \mathrm{div}[\mathbf{T}
    ^{\mathrm{T}}(\mathbf{x}) \;\delta \mathbf{v}
    (\mathbf{x})] \; \mathrm{d} \Omega
  -  \int_{\Omega} \delta \mathbf{v}(\mathbf{x}) 
  \cdot \mathrm{div}[\mathbf{T}(\mathbf{x})] \; \mathrm{d} \Omega
  \nonumber \\
  &=  \int_{\Gamma^{t}} \delta \mathbf{v}(\mathbf{x}) \cdot 
  \mathbf{t}^{\mathrm{p}}(\mathbf{x}) \; \mathrm{d} \Gamma
  -  \int_{\Omega} \delta \mathbf{v}(\mathbf{x}) 
    \cdot \mathrm{div}[\mathbf{T}(\mathbf{x})] \; \mathrm{d} \Omega
\end{align}
From equations \eqref{Eqn:total_dissipation_due_to_drag} 
and \eqref{Eqn:total_dissipation_due_to_internal_friction}, 
inequality \eqref{Eqn:total_mechanical_power} can be 
written as follows:
\begin{align}
  \Phi[\widetilde{\mathbf{v}}(\mathbf{x})]        
  &\geq \Phi[\mathbf{v}(\mathbf{x})] +  2 
  \int_{\Omega} \delta \mathbf{v}(\mathbf{x}) 
  \cdot \rho \mathbf{b}(\mathbf{x})  \; 
  \mathrm{d} \Omega
  + 2 \int_{\Gamma^{t}} \delta \mathbf{v}(\mathbf{x}) 
  \cdot \mathbf{t}^{\mathrm{p}}(\mathbf{x}) \; \mathrm{d} 
  \Gamma
\end{align}
This completes the proof.
\end{proof}
\begin{remark}
  For Darcy equations 
  \eqref{Eqn:Darcy_LM}--\eqref{Eqn:Darcy_traction_BC}, 
  one can state the minimum total mechanical power 
  theorem as follows: For given boundary conditions, 
  and body force; the Darcy velocity, $\mathbf{v}
  (\mathbf{x})$, has the minimum total mechanical 
  power among all the kinematically admissible vector 
  fields. That is,
  \begin{align}
    \frac{1}{2} \Phi[\mathbf{v}]
    - \int_{\Omega} \mathbf{v}(\mathbf{x}) \cdot 
    \rho \mathbf{b}(\mathbf{x}) \; \mathrm{d} 
    \Omega
    + \int_{\Gamma^{t}} p_0(\mathbf{x}) \mathbf{v}(\mathbf{x}) \cdot  
    \widehat{\mathbf{n}}(\mathbf{x}) \; \mathrm{d} \Gamma 
    & \leq \frac{1}{2} \Phi[\widetilde{\mathbf{v}}]
    - \int_{\Omega} \widetilde{\mathbf{v}}(\mathbf{x}) 
    \cdot \rho \mathbf{b}(\mathbf{x}) \; \mathrm{d} \Omega \nonumber \\
    & + \int_{\Gamma^{t}} p_0(\mathbf{x}) \widetilde{\mathbf{v}}(\mathbf{x}) \cdot  
    \widehat{\mathbf{n}}(\mathbf{x}) \; \mathrm{d} \Gamma 
    \quad \forall \widetilde{\mathbf{v}}(\mathbf{x})
  \end{align}
\end{remark}

\begin{theorem}[\textsf{Minimum dissipation inequality}]
  \label{Theorem:Minimum_dissipation}
Let $\mathbf{v}(\mathbf{x})$ be the Darcy-Brinkman 
velocity vector field, and let $\Gamma^{v} = \partial 
\Omega$. Then, any kinematically admissible vector 
field $\widetilde{\mathbf{v}}(\mathbf{x})$ satisfies 
the following inequality:
\begin{align}
  \Phi[\mathbf{v}] \leq \Phi[\widetilde{\mathbf{v}}]
\end{align}
That is, for given velocity boundary conditions 
and conservative body force, the Darcy-Brinkman 
velocity has the minimum total dissipation due 
to drag and internal friction of all the possible 
kinematically admissible vector fields. 
\end{theorem}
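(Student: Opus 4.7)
The plan is to reduce this theorem to the minimum total mechanical power theorem (Theorem \ref{Theorem:Minimum_total_mechanical_power}) by exploiting the two extra hypotheses: $\Gamma^{v} = \partial \Omega$ (so $\Gamma^{t} = \emptyset$) and the conservativeness of the body force. Under the first hypothesis, the boundary integral $\int_{\Gamma^{t}} \mathbf{t}^{\mathrm{p}} \cdot \mathbf{z} \, \mathrm{d}\Gamma$ in the definition \eqref{Eqn:Brinkman_TMP} of $\varepsilon_{\mathrm{TMP}}$ vanishes identically, so $\varepsilon_{\mathrm{TMP}}[\mathbf{z}] = \tfrac{1}{2}\Phi[\mathbf{z}] - \int_{\Omega} \rho \mathbf{b} \cdot \mathbf{z} \, \mathrm{d}\Omega$.

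First I would substitute $\rho \mathbf{b} = -\mathrm{grad}[\psi]$ into the body-force integral and integrate by parts (Green's identity), obtaining
\begin{align}
\int_{\Omega} \rho \mathbf{b}(\mathbf{x}) \cdot \mathbf{z}(\mathbf{x}) \, \mathrm{d}\Omega = -\int_{\partial \Omega} \psi(\mathbf{x}) \, \mathbf{z}(\mathbf{x}) \cdot \widehat{\mathbf{n}}(\mathbf{x}) \, \mathrm{d}\Gamma + \int_{\Omega} \psi(\mathbf{x}) \, \mathrm{div}[\mathbf{z}(\mathbf{x})] \, \mathrm{d}\Omega.
\end{align}
Next I would apply this identity to both $\mathbf{z} = \mathbf{v}$ (the Darcy-Brinkman velocity) and $\mathbf{z} = \widetilde{\mathbf{v}}$ (an arbitrary kinematically admissible field). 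The volume term vanishes for each because kinematic admissibility demands $\mathrm{div}[\mathbf{z}] = 0$. The boundary term depends only on $\mathbf{z} \cdot \widehat{\mathbf{n}}$ on $\partial \Omega$; since $\Gamma^{v} = \partial \Omega$ and both $\mathbf{v}$ and $\widetilde{\mathbf{v}}$ agree with the prescribed $\mathbf{v}^{\mathrm{p}}$ on $\partial \Omega$, the two boundary integrals coincide. Hence the body-force contribution in $\varepsilon_{\mathrm{TMP}}[\cdot]$ is the same constant $C$ whether evaluated at $\mathbf{v}$ or at $\widetilde{\mathbf{v}}$.

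With that reduction, $\varepsilon_{\mathrm{TMP}}[\mathbf{v}] = \tfrac{1}{2}\Phi[\mathbf{v}] - C$ and $\varepsilon_{\mathrm{TMP}}[\widetilde{\mathbf{v}}] = \tfrac{1}{2}\Phi[\widetilde{\mathbf{v}}] - C$, and invoking Theorem \ref{Theorem:Minimum_total_mechanical_power} gives $\tfrac{1}{2}\Phi[\mathbf{v}] \leq \tfrac{1}{2}\Phi[\widetilde{\mathbf{v}}]$, which is the desired inequality.

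The only delicate step is the cancellation of the body-force integral, which is the \emph{main obstacle} because it is precisely here that all three hypotheses of the theorem are used simultaneously: conservativeness supplies the scalar potential $\psi$, solenoidality of kinematically admissible fields kills the volume term after integration by parts, and the condition $\Gamma^{v} = \partial \Omega$ forces $\mathbf{v}$ and $\widetilde{\mathbf{v}}$ to share the same normal trace on all of $\partial \Omega$, so the remaining boundary term is field-independent. If any one of these assumptions were relaxed, the body-force contributions to $\varepsilon_{\mathrm{TMP}}[\mathbf{v}]$ and $\varepsilon_{\mathrm{TMP}}[\widetilde{\mathbf{v}}]$ would in general differ and the minimum-dissipation inequality would fail; the minimum total mechanical power inequality, which does not require the cancellation, would still hold.
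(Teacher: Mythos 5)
Your proof is correct, but it takes a genuinely different route from the paper's. The paper does not invoke Theorem \ref{Theorem:Minimum_total_mechanical_power} as a black box; instead it returns to the intermediate inequality $\Phi[\widetilde{\mathbf{v}}]-\Phi[\mathbf{v}]\geq 2\int_{\Omega}\alpha\,\delta\mathbf{v}\cdot\mathbf{v}\,\mathrm{d}\Omega+2\int_{\Omega}2\mu\,\delta\mathbf{D}\cdot\mathbf{D}\,\mathrm{d}\Omega$ from the proof of that theorem and shows the right-hand side vanishes: it substitutes $\alpha\mathbf{v}=\mathrm{grad}[\psi-p]+\mathrm{div}[2\mu\mathbf{D}]$ from the momentum balance together with $\rho\mathbf{b}=-\mathrm{grad}[\psi]$, integrates both cross terms by parts, and observes that the interior terms $\pm\int_{\Omega}\delta\mathbf{v}\cdot\mathrm{div}[2\mu\mathbf{D}]\,\mathrm{d}\Omega$ cancel, the term $\int_{\Omega}\mathrm{div}[\delta\mathbf{v}](\psi-p)\,\mathrm{d}\Omega$ dies by solenoidality, and all boundary terms die because $\delta\mathbf{v}=\mathbf{0}$ on $\partial\Omega=\Gamma^{v}$. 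You instead work one level up, on the functional $\varepsilon_{\mathrm{TMP}}$ itself: you show that with $\Gamma^{t}=\emptyset$ the traction term drops and that the body-force work $\int_{\Omega}\rho\mathbf{b}\cdot\mathbf{z}\,\mathrm{d}\Omega$ is the \emph{same constant} for every kinematically admissible $\mathbf{z}$ (conservativeness gives $\psi$, solenoidality kills the volume term, and the shared normal trace fixes the boundary term), so minimizing $\varepsilon_{\mathrm{TMP}}$ is equivalent to minimizing $\tfrac{1}{2}\Phi$. Your argument is more modular and makes the role of each hypothesis transparent — it even shows that only agreement of the normal traces is needed for the body-force cancellation, though the full Dirichlet data is of course available here. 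What the paper's direct computation buys is independence from the precise statement of Theorem \ref{Theorem:Minimum_total_mechanical_power} (it reuses only the quadratic-expansion inequality), and it exhibits explicitly how the pressure and the potential enter through the combined quantity $\psi-p$. Both proofs are sound and rest on the same three ingredients.
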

\begin{proof}
We shall employ the notation introduced in 
equation \eqref{Eqn:delta_v_and_D_definition}. 
Recall that the mechanical dissipation under the 
Darcy-Brinkman model is 
\begin{align}
\Phi[\mathbf{v}] = \int_{\Omega} \alpha \delta \mathbf{v}(\mathbf{x}) 
    \cdot \mathbf{v}(\mathbf{x}) \; \mathrm{d} \Omega 
    +  \int_{\Omega} 2 \mu \delta \mathbf{D}(\mathbf{x}) 
    \cdot \mathbf{D}(\mathbf{x}) \; \mathrm{d} \Omega 
\end{align}
Let us start with the difference in total dissipation, 
and from inequality \eqref{Eqn:total_mechanical_power} 
we have:
\begin{align}
    \Phi[\widetilde{\mathbf{v}}(\mathbf{x})] - 
    \Phi[\mathbf{v}(\mathbf{x})]
    \geq 2 \int_{\Omega} \alpha \delta \mathbf{v}(\mathbf{x}) 
    \cdot \mathbf{v}(\mathbf{x}) \; \mathrm{d} \Omega 
    + 2 \int_{\Omega} 2 \mu \delta \mathbf{D}(\mathbf{x}) 
    \cdot \mathbf{D}(\mathbf{x}) \; \mathrm{d} \Omega 
\end{align}
Using Green's identity, the first integral 
can be simplified as follows:
\begin{align}	
    \int_{\Omega} \alpha \delta \mathbf{v}(\mathbf{x}) 
    \cdot \mathbf{v}(\mathbf{x}) \; \mathrm{d} \Omega &=
	 \int_{\Omega} \delta 	\mathbf{v}
    (\mathbf{x}) \cdot \mathrm{grad}[\psi(\mathbf{x}) - 
      p(\mathbf{x})] \; \mathrm{d} \Omega
   +  \int_{\Omega} \delta \mathbf{v}(\mathbf{x}) 
    \cdot \mathrm{div}[2 \mu \mathbf{D}(\mathbf{x})] \; \mathrm{d} \Omega 
    \nonumber \\
  &=  \int_{\partial \Omega} \delta \mathbf{v}
    (\mathbf{x}) \cdot \widehat{\mathbf{n}}(\mathbf{x}) \left(\psi
    (\mathbf{x}) - p(\mathbf{x})\right) \; \mathrm{d} \Gamma
    - \int_{\Omega} \mathrm{div}[\delta \mathbf{v}
      (\mathbf{x})] \left(\psi(\mathbf{x}) - 
      p(\mathbf{x})\right) \; \mathrm{d} \Omega \nonumber \\
	&\quad +  \int_{\Omega} \delta \mathbf{v}(\mathbf{x}) 
    \cdot \mathrm{div}[2 \mu \mathbf{D}(\mathbf{x})] \; \mathrm{d} \Omega 
\end{align}
Noting the symmetry of $\mathbf{D}(\mathbf{x})$ and 
using Green's identity, the total dissipation due 
to internal friction can be simplified as follows:
\begin{align}	
	\int_{\Omega} 2 \mu \delta \mathbf{D}(\mathbf{x}) 
    \cdot \mathbf{D}(\mathbf{x}) \; \mathrm{d} \Omega
	&=\int_{\Omega} 2 \mu \; \mathrm{grad}[\delta \mathbf{v}(\mathbf{x})] 
    \cdot \mathbf{D}(\mathbf{x}) \; \mathrm{d} \Omega \nonumber \\
	&=  \int_{\partial \Omega} 2 \mu \delta \mathbf{v}
    (\mathbf{x}) \cdot \mathbf{D}(\mathbf{x}) 
	\widehat{\mathbf{n}}(\mathbf{x}) \; \mathrm{d} \Gamma
    -  \int_{\Omega} \delta \mathbf{v}(\mathbf{x}) 
    \cdot \mathrm{div}[2 \mu \mathbf{D}(\mathbf{x})] \; \mathrm{d} \Omega 
\end{align}
From equations \eqref{Eqn:total_dissipation_due_to_drag}--\eqref{Eqn:total_dissipation_due_to_internal_friction}, 
the total dissipation due to drag and internal friction satisfies:
\begin{align}
  \Phi[\widetilde{\mathbf{v}}(\mathbf{x})] - 
  \Phi[\mathbf{v}(\mathbf{x})]   
  &\geq  2 \int_{\partial \Omega} \delta \mathbf{v}
  (\mathbf{x}) \cdot \widehat{\mathbf{n}}(\mathbf{x}) \left(\psi
  (\mathbf{x}) - p(\mathbf{x})\right) \; \mathrm{d} \Gamma
  - 2 \int_{\Omega} \mathrm{div}[\delta \mathbf{v}
    (\mathbf{x})] \left(\psi(\mathbf{x}) - 
  p(\mathbf{x})\right) \; \mathrm{d} \Omega \nonumber \\
  & +2 \int_{\partial \Omega} 2 \mu \delta \mathbf{v}
  (\mathbf{x}) \cdot \mathbf{D}(\mathbf{x}) 
  \widehat{\mathbf{n}}(\mathbf{x})\; \mathrm{d} \Gamma
  = 0	
\end{align}
This completes the proof.
\end{proof}

The solutions to Darcy and Darcy-Brinkman equations 
posses reciprocal relations similar to the famous 
Betti's reciprocal relation in the theory of linear 
elasticity \citep{Truesdell_Noll, Sadd} and to a 
classical reciprocal relation in the area of creeping 
flows \citep{Guazzelli_Morris}. The Betti's reciprocal 
relation is often employed to solve a class of problems 
in linear elasticity, which otherwise may be difficult 
to solve. 
Mathematically, the Betti's reciprocal relation 
is equivalent to the existence and symmetry of 
Green's function. We now precisely state a 
reciprocal relation that the solutions of 
Darcy-Brinkman equations satisfy, and then 
provide a mathematical proof. 

\begin{theorem}[\textsf{Reciprocal relation}]
  \label{Theorem:Betti_relation}
  Assume that $\mathbf{v}^{\mathrm{p}}(\mathbf{x}) = \mathbf{0}$ 
  on $\Gamma^{v}$. Let $\left\{\mathbf{v}_1(\mathbf{x}), 
  p_1(\mathbf{x})\right\}$ and $\left\{\mathbf{v}_2(\mathbf{x}), 
  p_2(\mathbf{x})\right\}$ be the solutions of equations 
  \eqref{Eqn:Brinkman_LM}--\eqref{Eqn:Brinkman_traction_BC} 
  for the prescribed data $\left\{\mathbf{b}_1(\mathbf{x}), 
  \mathbf{t}^{\mathrm{p}}_1(\mathbf{x})\right\}$ and 
  $\left\{\mathbf{b}_2(\mathbf{x}), \mathbf{t}^{\mathrm{p}}_2
  (\mathbf{x})\right\}$, respectively. Then, these fields 
  satisfy the following relation:
  \begin{align}
    \label{Eqn:Brinkman_reciprocal}
    \int_{\Omega} \rho \mathbf{b}_1(\mathbf{x}) 
    \cdot \mathbf{v}_2(\mathbf{x}) \; \mathrm{d} 
    \Omega
    + \int_{\Gamma^{t}} \mathbf{t}_1^{\mathrm{p}}(\mathbf{x}) 
    \cdot \mathbf{v}_2(\mathbf{x}) \; \mathrm{d} \Gamma
    = \int_{\Omega} \rho \mathbf{b}_2(\mathbf{x}) 
    \cdot \mathbf{v}_1(\mathbf{x}) \; \mathrm{d} 
    \Omega
    + \int_{\Gamma^{t}} \mathbf{t}_2^{\mathrm{p}}(\mathbf{x}) 
    \cdot \mathbf{v}_1(\mathbf{x}) \; \mathrm{d} \Gamma
  \end{align}
\end{theorem}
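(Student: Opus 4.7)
The plan is to recognize that the left-hand side of the target identity \eqref{Eqn:Brinkman_reciprocal} equals a \emph{symmetric} bilinear form evaluated on $(\mathbf{v}_1, \mathbf{v}_2)$, so the reciprocity follows by swapping the roles of the two solutions. Concretely, I will test the linear momentum balance \eqref{Eqn:Brinkman_LM} for the first solution against the velocity of the second solution, and integrate by parts; the analogue of the ``virtual work identity'' from linear elasticity will fall out naturally, since the Darcy--Brinkman operator on the left-hand side of \eqref{Eqn:Brinkman_LM} is formally self-adjoint on divergence-free fields satisfying the homogeneous Dirichlet condition on $\Gamma^{v}$.

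The key computation is as follows. Taking the dot product of \eqref{Eqn:Brinkman_LM} for data $\{\mathbf{b}_1, \mathbf{t}_1^{\mathrm{p}}\}$ with $\mathbf{v}_2(\mathbf{x})$ and integrating over $\Omega$ gives
\begin{align*}
  \int_{\Omega} \alpha \mathbf{v}_1 \cdot \mathbf{v}_2 \, \mathrm{d}\Omega
  + \int_{\Omega} \mathrm{grad}[p_1] \cdot \mathbf{v}_2 \, \mathrm{d}\Omega
  - \int_{\Omega} \mathrm{div}[2\mu \mathbf{D}_1] \cdot \mathbf{v}_2 \, \mathrm{d}\Omega
  = \int_{\Omega} \rho \mathbf{b}_1 \cdot \mathbf{v}_2 \, \mathrm{d}\Omega.
\end{align*}
Next I would apply Green's identity to the two middle terms, using $\mathrm{div}[\mathbf{v}_2] = 0$ in $\Omega$ to kill the volumetric pressure contribution, and using the symmetry of $\mathbf{D}_1$ to rewrite the viscous volume term as $\int_{\Omega} 2\mu \mathbf{D}_1 \cdot \mathbf{D}_2 \, \mathrm{d}\Omega$. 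The boundary contributions combine into $-\int_{\partial \Omega} (\mathbf{T}_1 \widehat{\mathbf{n}}) \cdot \mathbf{v}_2 \, \mathrm{d}\Gamma$ with $\mathbf{T}_1 = -p_1 \mathbf{I} + 2\mu \mathbf{D}_1$; this splits as a $\Gamma^{v}$ part (which vanishes because $\mathbf{v}_2 = \mathbf{v}^{\mathrm{p}} = \mathbf{0}$ by hypothesis) and a $\Gamma^{t}$ part equal to $-\int_{\Gamma^{t}} \mathbf{t}_1^{\mathrm{p}} \cdot \mathbf{v}_2 \, \mathrm{d}\Gamma$ via \eqref{Eqn:Brinkman_traction_BC}. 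Rearranging yields
\begin{align*}
  \int_{\Omega} \alpha \mathbf{v}_1 \cdot \mathbf{v}_2 \, \mathrm{d}\Omega
  + \int_{\Omega} 2\mu \mathbf{D}_1 \cdot \mathbf{D}_2 \, \mathrm{d}\Omega
  = \int_{\Omega} \rho \mathbf{b}_1 \cdot \mathbf{v}_2 \, \mathrm{d}\Omega
  + \int_{\Gamma^{t}} \mathbf{t}_1^{\mathrm{p}} \cdot \mathbf{v}_2 \, \mathrm{d}\Gamma.
\end{align*}

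To finish, I would repeat the identical argument after interchanging the subscripts $1 \leftrightarrow 2$, producing the analogous identity with the right-hand side of \eqref{Eqn:Brinkman_reciprocal}. The left-hand side of the displayed identity is manifestly symmetric under $1 \leftrightarrow 2$ (scalar contractions $\alpha \mathbf{v}_1 \cdot \mathbf{v}_2$ and $\mathbf{D}_1 \cdot \mathbf{D}_2$ are commutative), so equating the two right-hand sides delivers \eqref{Eqn:Brinkman_reciprocal}.

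There is no serious obstacle here; the result is essentially bookkeeping once the correct test function is chosen. The only point demanding care is the bookkeeping of boundary terms: one must be sure that the combined boundary integral from the pressure and viscous integrations by parts reassembles into $\mathbf{T}_1 \widehat{\mathbf{n}}$ on all of $\partial \Omega$, and that the hypothesis $\mathbf{v}^{\mathrm{p}} = \mathbf{0}$ is what makes the $\Gamma^{v}$ portion disappear (a nonzero prescribed velocity would introduce extra nontrivial boundary contributions and spoil the clean form of \eqref{Eqn:Brinkman_reciprocal}). An analogous but simpler argument handles the Darcy case, where the viscous term is absent and the traction boundary integral is replaced by $\int_{\Gamma^{t}} p_0 (\mathbf{v}_2 \cdot \widehat{\mathbf{n}}) \, \mathrm{d}\Gamma$.
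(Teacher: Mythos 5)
Your proposal is correct and follows essentially the same route as the paper: both arguments reduce each side of \eqref{Eqn:Brinkman_reciprocal} to the symmetric bilinear form $\int_{\Omega} \alpha \mathbf{v}_1 \cdot \mathbf{v}_2 \, \mathrm{d}\Omega + \int_{\Omega} 2\mu \mathbf{D}_1 \cdot \mathbf{D}_2 \, \mathrm{d}\Omega$ using Green's identity, $\mathrm{div}[\mathbf{v}_2]=0$, the symmetry of $\mathbf{D}_1$, and the vanishing of $\mathbf{v}$ on $\Gamma^{v}$. The only cosmetic difference is the direction of the computation (you start from the tested momentum balance, the paper starts from the left integral of the reciprocal relation), which changes nothing of substance.
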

\begin{proof}
  Let us start with the left side of equation 
  \eqref{Eqn:Brinkman_reciprocal}. Noting that 
  $\mathbf{v}^{\mathrm{p}}(\mathbf{x}) = \mathbf{0}$ 
  on $\Gamma^v$ and $\Gamma^{v} \cup \Gamma^{t} = 
  \partial \Omega$, one can proceed as follows: 
  \begin{align*}
   \int_{\Omega} \rho \mathbf{b}_1(\mathbf{x}) \cdot 
   \mathbf{v}_2(\mathbf{x}) \; \mathrm{d} \Omega
   &+ \int_{\Gamma^{t}} \mathbf{t}_1^{\mathrm{p}}(\mathbf{x}) 
   \cdot \mathbf{v}_2(\mathbf{x}) \; \mathrm{d} \Gamma
   = \int_{\Omega} \rho \mathbf{b}_1(\mathbf{x}) \cdot 
   \mathbf{v}_2(\mathbf{x}) \; \mathrm{d} \Omega 
   + \int_{\Gamma^{t}} \left(\mathbf{T}_1 \widehat{\mathbf{n}}
   (\mathbf{x})\right) \cdot \mathbf{v}_2(\mathbf{x}) \; 
   \mathrm{d} \Gamma \\
   &= \int_{\Omega} \rho \mathbf{b}_1(\mathbf{x}) \cdot 
   \mathbf{v}_2(\mathbf{x}) \; \mathrm{d} \Omega 
   + \int_{\partial \Omega} \left(\mathbf{T}_1 \widehat{\mathbf{n}}
   (\mathbf{x})\right) \cdot \mathbf{v}_2(\mathbf{x}) \; 
   \mathrm{d} \Gamma \\
   &= \int_{\Omega} \rho \mathbf{b}_1(\mathbf{x}) \cdot 
   \mathbf{v}_2(\mathbf{x}) \; \mathrm{d} \Omega 
   + \int_{\Omega} \mathrm{div} \left[\mathbf{T}_1^{\mathrm{T}}
     (\mathbf{x}) \mathbf{v}_2(\mathbf{x})\right] \; \mathrm{d} \Omega \\
   &= \int_{\Omega} \left(\rho \mathbf{b}_1(\mathbf{x}) 
   + \mathrm{div}[\mathbf{T}_1] \right) \cdot 
   \mathbf{v}_2(\mathbf{x}) \; \mathrm{d} \Omega 
   + \int_{\Omega} \mathbf{T}_1(\mathbf{x}) \cdot \mathrm{grad}
   \left[\mathbf{v}_2\right]\; \mathrm{d} \Omega \\
   &= \int_{\Omega} \alpha(\mathbf{x}) \mathbf{v}_1(\mathbf{x}) 
   \cdot \mathbf{v}_2(\mathbf{x}) \; \mathrm{d} \Omega
   + \int_{\Omega} \left(-p_1(\mathbf{x}) \mathbf{I} + 2 \mu 
   \mathbf{D}_1(\mathbf{x})
   \right) \cdot \mathrm{grad}\left[\mathbf{v}_2 \right]\; 
   \mathrm{d} \Omega \\
   &= \int_{\Omega} \alpha(\mathbf{x}) \mathbf{v}_1(\mathbf{x}) 
   \cdot \mathbf{v}_2(\mathbf{x}) \; \mathrm{d} \Omega - 
   \int_{\Omega} p_1(\mathbf{x})\mathrm{div}\left[\mathbf{v}_2 \right] 
   \; \mathrm{d} \Omega + \int_{\Omega} 2 \mu \mathbf{D}_1(\mathbf{x})
   \cdot \mathbf{D}_2(\mathbf{x}) \; \mathrm{d} \Omega
\end{align*}
  In the above step, we have used the fact that $\mathbf{D}_1
  (\mathbf{x})$ is a symmetric second-order tensor. Since 
  ${\mathrm{div}\left[\mathbf{v}_2\right]=0}$ we have the 
  following: 
  \begin{align*}
    \int_{\Omega} \rho \mathbf{b}_1(\mathbf{x}) \cdot 
    \mathbf{v}_2(\mathbf{x}) \; \mathrm{d} \Omega
    + \int_{\Gamma^{t}} \mathbf{t}_1^{\mathrm{p}}(\mathbf{x}) 
    \cdot \mathbf{v}_{2}(\mathbf{x}) \; \mathrm{d} \Gamma
    &= \int_{\Omega} \alpha(\mathbf{x}) \mathbf{v}_1
    (\mathbf{x}) \cdot \mathbf{v}_2(\mathbf{x}) 
    \; \mathrm{d} \Omega + \int_{\Omega} 2 \mu 
    \mathbf{D}_1(\mathbf{x}) \cdot \mathbf{D}_2
    (\mathbf{x}) \; \mathrm{d} \Omega
  \end{align*}
  Similarly, it can be shown that the right side 
  of equation \eqref{Eqn:Brinkman_reciprocal} is 
  also equal to 
  \begin{align*}
  \int_{\Omega} \alpha(\mathbf{x}) \mathbf{v}_1(\mathbf{x}) 
  \cdot \mathbf{v}_2(\mathbf{x}) \; \mathrm{d} \Omega 
  + \int_{\Omega} 2 \mu \mathbf{D}_1(\mathbf{x}) \cdot 
  \mathbf{D}_2(\mathbf{x}) \; \mathrm{d} \Omega
  \end{align*}  
  This completes the proof. 
\end{proof}
The following notation will be used later 
to verify the reciprocal relation:
\begin{align}
  \label{Eqn:Brinkman_epsilon_reciprocal}
  \varepsilon_{\mathrm{reciprocal}} := \frac{\mbox{left integral} 
    - \mbox{right integral}}{\mbox{left integral}}
\end{align}
where the left and right integrals are, 
respectively, defined as follows:
\begin{subequations}
  \begin{align}
    \mbox{left integral} := \int_{\Omega} \rho 
    \mathbf{b}_1(\mathbf{x}) \cdot \mathbf{v}_2
    (\mathbf{x}) \; \mathrm{d} \Omega + 
    \int_{\Gamma^{t}} \mathbf{t}_1^{\mathrm{p}}(\mathbf{x}) 
    \cdot \mathbf{v}_2(\mathbf{x}) \; \mathrm{d} \Gamma \\
    \mbox{right integral} := \int_{\Omega} \rho 
    \mathbf{b}_2(\mathbf{x}) \cdot \mathbf{v}_1
    (\mathbf{x}) \; \mathrm{d} \Omega + 
    \int_{\Gamma^{t}} \mathbf{t}_2^{\mathrm{p}}(\mathbf{x}) 
    \cdot \mathbf{v}_1(\mathbf{x}) \; \mathrm{d} \Gamma
  \end{align}
\end{subequations}

\begin{remark}
  \label{Remark:Betti_relation_Darcy}
  The corresponding reciprocal relation for Darcy 
  equations can be written as follows: Assume that 
  $v_{n}(\mathbf{x}) = 0$ on $\Gamma^{v}$. Let $\left
  \{\mathbf{v}_1(\mathbf{x}), p_1(\mathbf{x})\right\}$ 
  and $\left\{\mathbf{v}_2(\mathbf{x}), p_2(\mathbf{x})
  \right\}$ be the solutions of equations 
  \eqref{Eqn:Brinkman_LM}--\eqref{Eqn:Brinkman_traction_BC} 
  for the prescribed data $\left\{\mathbf{b}_1(\mathbf{x}), 
  {p_{0}}_1(\mathbf{x})\right\}$ and $\left\{\mathbf{b}_2
  (\mathbf{x}), {p_{0}}_2(\mathbf{x})\right\}$, respectively. 
  Then, these fields satisfy the following relation:
  {\small
    \begin{align}
      \int_{\Omega} \rho \mathbf{b}_1(\mathbf{x}) 
      \cdot \mathbf{v}_2(\mathbf{x}) \; \mathrm{d} 
      \Omega
      &- \int_{\Gamma^{t}} {p_0}_1(\mathbf{x}) \widehat{\mathbf{n}}
      (\mathbf{x}) \cdot \mathbf{v}_2(\mathbf{x}) \; \mathrm{d} 
      \Gamma \nonumber \\
      &= \int_{\Omega} \rho \mathbf{b}_2(\mathbf{x}) 
      \cdot \mathbf{v}_1(\mathbf{x}) \; \mathrm{d} 
      \Omega
      - \int_{\Gamma^{t}} {p_0}_2(\mathbf{x}) \widehat{\mathbf{n}}
      (\mathbf{x})\cdot \mathbf{v}_1(\mathbf{x}) \; \mathrm{d} 
      \Gamma
    \end{align}
  }
\end{remark}

\begin{remark}
  It needs to be emphasized that the reciprocal relation will 
  not directly be able to assess the accuracy of the pressure 
  field in the computational domain. The reciprocal relation 
  is ideal for assessing the accuracy of the velocity vector 
  field in the domain and the accuracy of the implementation 
  of prescribed traction boundary conditions. This reciprocal 
  relation will not be able to provide information about the 
  accuracy of the implementation of non-zero velocity boundary 
  conditions.
\end{remark}

Next, we discuss in the form of a theorem on the nature 
of the vorticity under the Darcy and Darcy-Brinkman 
models. To this end,  
\begin{align}
  \boldsymbol{\omega}(\mathbf{x}) \equiv  
  \mathrm{curl}[\mathbf{v}(\mathbf{x})]
\end{align}
In a Cartesian coordinate system, the components 
of vorticity take the following form:
\begin{align}
  \label{Eqn:vorticity_xyz_coord}
  \omega_x = \frac{\partial v_z}{\partial y} - 
  \frac{\partial v_y}{\partial z}, \; 
  \omega_y = \frac{\partial v_x}{\partial z} - 
  \frac{\partial v_z}{\partial x}, \;
  \omega_z = \frac{\partial v_y}{\partial x} - 
  \frac{\partial v_x}{\partial y}
\end{align}
It should be emphasized that $\mathrm{curl}[\cdot]$ 
operator is defined only in $\mathbb{R}^{3}$ (i.e., 
the three-dimensional Euclidean space).
However, for two-dimensional problems, one 
can consider the vorticity as follows:
\begin{align}
  \boldsymbol{\omega}(\mathrm{x},\mathrm{y}) = \omega_{z}
  (\mathrm{x},\mathrm{y}) \widehat{\mathbf{e}}_z
\end{align}
where $z$ denotes the axis perpendicular to the 
two-dimensional plane in which the problem is 
defined, and $\widehat{\mathrm{e}}_z$ is the 
unit vector along the $z$-direction. 

\begin{theorem}[\textsf{On the nature of vorticity 
      under Darcy and Darcy-Brinkman models}]
  \label{Theorem:Vorticity}
  Assume that the medium is isotropic and homogeneous 
  (i.e., $\alpha(\mathbf{x})$ is a constant scalar), 
  the body force is a conservative vector field (i.e., 
  $\rho \mathbf{b}(\mathbf{x}) = -\mathrm{grad}[\psi
    (\mathbf{x})]$), and the response is steady-state. 
  Then the vorticity vanishes under Darcy equations. 
  Under Darcy-Brinkman equations, the vorticity is 
  an eigenvector of the Laplacian with $1/k$ 
  as the eigenvalue. Moreover, the 
  vorticity satisfies a maximum principle 
  in which the non-negative maximum and 
  the non-positive minimum occur on the 
  boundary.   
\end{theorem}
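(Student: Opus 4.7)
The plan is to take the curl of the momentum balance in each model and exploit the vector-calculus identity $\mathrm{curl}[\mathrm{grad}[f]] = \mathbf{0}$. For the Darcy model, equation \eqref{Eqn:Darcy_LM} with a conservative body force reads $\alpha \mathbf{v} + \mathrm{grad}[p] = -\mathrm{grad}[\psi]$. Because $\alpha$ is constant, applying $\mathrm{curl}[\cdot]$ annihilates both gradient terms and leaves $\alpha \boldsymbol{\omega} = \mathbf{0}$, so the vorticity vanishes identically.

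For the Darcy-Brinkman model, I would first invoke incompressibility to rewrite $\mathrm{div}[2\mu \mathbf{D}] = \mu \Delta \mathbf{v} + \mu \, \mathrm{grad}[\mathrm{div}[\mathbf{v}]] = \mu \Delta \mathbf{v}$, so that equation \eqref{Eqn:Brinkman_LM} reduces to $\alpha \mathbf{v} + \mathrm{grad}[p] - \mu \Delta \mathbf{v} = -\mathrm{grad}[\psi]$. Taking the curl, the two gradient terms drop out, and since $\mathrm{curl}$ commutes with the vector Laplacian for sufficiently smooth fields, we obtain $\alpha \boldsymbol{\omega} - \mu \Delta \boldsymbol{\omega} = \mathbf{0}$. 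Dividing by $\mu$ and using $\alpha = \mu/k$ from equation \eqref{Eqn:Brinkman_alpha_mu} gives $\Delta \boldsymbol{\omega} = (1/k)\,\boldsymbol{\omega}$, exhibiting $\boldsymbol{\omega}$ as an eigenfunction of the Laplacian with eigenvalue $1/k$.

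The maximum principle then follows from the classical argument for scalar elliptic PDEs of the form $-\Delta u + c\,u = 0$ with positive zeroth-order coefficient $c = 1/k > 0$. In two dimensions this applies directly to $\omega_z$; in three dimensions each Cartesian component of $\boldsymbol{\omega}$ satisfies the same scalar equation, so one argues componentwise. If $\omega_z$ attained a strictly positive interior maximum at some $\mathbf{x}_0 \in \Omega$, then $\Delta \omega_z(\mathbf{x}_0) \leq 0$, and substituting into $\Delta \omega_z = (1/k)\,\omega_z$ would force $\omega_z(\mathbf{x}_0) \leq 0$, a contradiction; hence any non-negative maximum must be attained on $\partial \Omega$. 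Applying the same reasoning to $-\omega_z$ pushes any non-positive minimum to the boundary as well.

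The main obstacle I anticipate is the careful justification of the commutation $\mathrm{curl}[\Delta \mathbf{v}] = \Delta[\mathrm{curl}[\mathbf{v}]]$ (immediate for classical solutions but worth explicit mention) and, more subtly, the transition from the vectorial Helmholtz-type identity to the pointwise maximum principle in three dimensions, where one must work with individual Cartesian components rather than with $|\boldsymbol{\omega}|$ directly. The constancy and positivity of $\alpha$ and $\mu$ are essential throughout: if $\alpha$ were spatially varying then $\mathrm{curl}[\alpha \mathbf{v}]$ would produce an additional $\mathrm{grad}[\alpha] \times \mathbf{v}$ term that neither vanishes nor has a definite sign, and the clean eigenvalue structure would be lost.
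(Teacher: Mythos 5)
Your proposal is correct and follows essentially the same route as the paper: take the curl of the momentum balance, use the constancy of $\alpha$ and the conservative body force to kill the gradient terms, invoke incompressibility to reduce $\mathrm{div}[2\mu\mathbf{D}]$ to $\mu\Delta\mathbf{v}$, and arrive at $\Delta\boldsymbol{\omega} = (1/k)\,\boldsymbol{\omega}$. The only difference is cosmetic: where the paper cites the standard maximum principle for elliptic equations with decay, you prove the weak maximum principle directly via the second-derivative test at an interior extremum and explicitly note the componentwise reduction in three dimensions, which makes the last step self-contained but does not change the argument.
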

\begin{proof}
  By taking the curl on both sides of the balance 
  of linear momentum under Darcy equations 
  \eqref{Eqn:Darcy_LM} we obtain the following:
  \begin{align}
    \mathrm{curl}[\alpha \mathbf{v}] = -\mathrm{curl}[
      \mathrm{grad}[\psi + p]] = \mathbf{0} 
  \end{align}
  Since $\alpha$ is spatially homogeneous scalar, 
  one can conclude that the vorticity vanishes 
  under the Darcy model.

  The incompressibility constraint implies 
  that the balance of linear momentum under 
  the Darcy-Brinkman model can be written 
  as follows:
  \begin{align}
    \alpha \mathbf{v}(\mathbf{x}) + 
    \mathrm{grad}[p(\mathbf{x})] - \mu \mathrm{div}
           [\mathrm{grad}[\mathbf{v}]]
           = -\mathrm{grad}[\psi] 
  \end{align}
  By taking curl on both sides of the above 
  equation, we get the following:
  \begin{align}
    \Delta \boldsymbol{\omega}(\mathbf{x}) = 
    \frac{1}{k} \boldsymbol{\omega}(\mathbf{x}) 
  \end{align}
  where $\Delta$ denotes the Laplacian operator and 
  $k$ is the permeability. The above equation is a 
  \emph{vector} eigenvalue problem in which the 
  vorticity vector is the eigenvector, and $1/k$ 
  is the corresponding eigenvalue. 
  For two-dimensional problems, we have the following: 
  \begin{align}
    \Delta_{2\mathrm{D}} \omega_{z}(\mathrm{x},\mathrm{y}) 
    = \frac{1}{k} \omega_{z}(\mathrm{x},\mathrm{y})
  \end{align}
  where $\Delta_{2\mathrm{D}}$ denotes the two-dimensional 
  Laplacian operator. The above equation is a \emph{scalar} 
  eigenvalue problem in which $\omega_{z}(\mathrm{x},\mathrm{y})$ 
  is the eigenvector and $1/k$ is the corresponding eigenvalue. 
  
  Since $1/k > 0$, this equation is commonly referred to 
  as diffusion with decay, which is a linear self-adjoint 
  elliptic partial differential equation. It is well-known 
  that such a partial differential equation satisfies a 
  maximum principle \citep{Gilbarg_Trudinger}. 
  Mathematically, the maximum principle for the vorticity 
  under the Darcy-Brinkman model takes the following form: 
  If $w_{z}(\mathbf{x}) \in C^{2}(\Omega) \cap C^{0}
  (\overline{\Omega})$, then the non-negative maximum 
  and the non-positive minimum occur on the boundary.
  That is, 
  \begin{subequations}
    \begin{align}
      &\max_{\mathbf{x} \in \overline{\Omega}} \left[\omega_{z}(\mathbf{x})\right] \leq \max\left[0,\max_{\mathbf{x} \in \partial \Omega} \omega_{z}(\mathbf{x}) \right] \\
      &\min_{\mathbf{x} \in \overline{\Omega}} \left[\omega_{z}(\mathbf{x}) \right] \geq \min\left[0,\min_{\mathbf{x} \in \partial \Omega} \omega_{z} (\mathbf{x}) \right]
    \end{align}
  \end{subequations}
  where $C^{2}(\Omega)$ denotes the set of twice 
  differentiable functions defined on $\Omega$, 
  and $C^{0}(\overline{\Omega})$ is the set of 
  functions that are continuous to the boundary. 
\end{proof}

The maximum principle is used to obtain 
physical meaningful numerical solutions 
(see \citep{nakshatrala_2015_NN_CiCP, 
Maruti_2015_NN_ADR_JCP} and References therein).
Herein, it can be utilized to verify 
the accuracy of numerical solutions by plotting 
vorticity and checking whether the non-negative 
maximum and non-positive minimum of the 
vorticity occur on the boundary. 
For the Darcy model with isotropic and homogeneous 
medium properties and conservative body force, 
it can be shown that the vorticity vanishes (i.e., 
$\boldsymbol{\omega}(\mathbf{x}) = \boldsymbol{0}$). 
However, it should be noted that heterogeneity, 
pressure-dependent viscosity, or non-conservative 
body force can introduce vorticity under the Darcy 
model.
All the above results can serve as invaluable tools 
to assess the performance of a numerical formulation 
to verify a computer implementation, and to provide 
metrics for numerical convergence.

\section{STEADY-STATE NUMERICAL RESULTS}
\label{Sec:S4_Brinkman_NR}
We shall first non-dimensionalize the governing 
equations by choosing primary variables that seem 
appropriate for problems arising in modeling of flows 
through porous media. This non-dimensional procedure 
is different from the standard non-dimensionalization 
procedure for incompressible Navier-Stokes in the 
choice of primary variables. In the standard 
non-dimensionalization of Navier-Stokes equations, 
one employs characteristic velocity $v$, characteristic 
length $L$  and density of the fluid $\rho$ as primary 
variables. 
We shall choose $L$ (reference length in the problem), 
$g$ (acceleration due to gravity) and $p_{\mathrm{atm}}$ 
(atmospheric pressure) as the reference quantities. 
Using these reference quantities, we define the 
following non-dimensional quantities: 
\begin{align}
  &\overline{\mathbf{x}} = \frac{\mathbf{x}}{L}, \; 
  \overline{\mathbf{v}} = \frac{\mathbf{v}}{\sqrt{gL}}, \; 
  \overline{\mathbf{v}}^{\mathrm{p}} = \frac{\mathbf{v}^{\mathrm{p}}}{\sqrt{gL}}, \;
  \overline{\mathbf{T}} = \frac{\mathbf{T}}{p_{\mathrm{atm}}}, \;
  \overline{\mathbf{b}} = \frac{\mathbf{b}}{g}, \;
  \overline{p} = \frac{p}{p_{\mathrm{atm}}},  \nonumber \\ 
  &\overline{p}_0 = \frac{p_0}{p_{\mathrm{atm}}}, \; 
  \overline{\rho} = \frac{\rho g L}{p_{\mathrm{atm}}}, \; 
  \overline{\alpha} = \frac{\alpha \sqrt{g L^3}}{p_{\mathrm{atm}}}, \;
  \overline{\mu}= \frac{\mu \sqrt{g/L}}{p_{\mathrm{atm}}}, \;
  \overline{k}= \frac{k}{L^2}
\end{align}
where the non-dimensional quantities are denoted by superposed bars.
The gradient and divergence operators with respect to 
$\overline{\mathbf{x}}$ are denoted as $\overline{\mathrm{grad}}
[\cdot]$ and $\overline{\mathrm{div}}[\cdot]$, respectively. The 
scaled domain $\Omega_{\mathrm{scaled}}$ is defined as follows: A 
point in space with position vector $\overline{\mathbf{x}} \in 
\Omega_{\mathrm{scaled}}$ corresponds to the same point with position 
vector given by $\mathbf{x} = \overline{\mathbf{x}} L \in \Omega$. 
Similarly, one can define the scaled boundaries: $\partial 
\Omega_{\mathrm{scaled}}$, $\Gamma^{v}_{\mathrm{scaled}}$, and 
$\Gamma^{p}_{\mathrm{scaled}}$. 
Using the above non-dimensionalization procedure, 
Darcy-Brinkman equations can be written as follows:
\begin{subequations}
  \begin{align}
    &\overline{\alpha}(\overline{\mathbf{x}}) \overline{\mathbf{v}}(\overline{\mathbf{x}}) + \overline{\mathrm{grad}} 
      [\overline{p}(\overline{\mathbf{x}})] - \overline{\mathrm{div}} [ 2 \overline{\mu} \overline{\mathbf{D}}(\overline{\mathbf{x}})] 
    = \overline{\rho} \overline{\mathbf{b}}(\overline{\mathbf{x}}) \quad 
    \; \mathrm{in} \; {\Omega}_{\mathrm{scaled}} \\
    &\overline{\mathrm{div}}[\overline{\mathbf{v}}(\overline{\mathbf{x}})] = 0 \quad \; 
    \mathrm{in} \; {\Omega}_{\mathrm{scaled}} \\
    &\overline{\mathbf{v}}(\overline{\mathbf{x}}) = 
    \overline{\mathbf{v}}^{\mathrm{p}}(\overline{\mathbf{x}}) \quad \; 
        \mathrm{on} \; {\Gamma}^{v}_{\mathrm{scaled}} \\
    &\overline{\mathbf{t}}(\overline{\mathbf{x}}) = 
        \overline{\mathbf{t}}^{\mathrm{p}} (\overline{\mathbf{x}}) 
    \quad \; \mathrm{on} \; {\Gamma}^{t}_{\mathrm{scaled}}
  \end{align}
\end{subequations}
Similarly, one could write the corresponding non-dimensional 
form of Darcy equations. For simplicity, the ``over-lines" 
will be dropped in the remainder of the paper. 

We shall use several flow through porous media problems with 
different boundary conditions to illustrate that the proposed 
\emph{a posteriori} techniques can be used as good measures 
for the accuracy and convergence of numerical results. 
We have employed P2P1 (which is based on six-node triangle 
element T6) and Q2P1 (which is based on nine-node quadrilateral 
element Q9) interpolations available in COMSOL \citep{COMSOL_v4.3b}. 
Consistent SUPG stabilization has been employed if the interpolations 
(i.e., Q2P1 and P2P1) violate the LBB \emph{inf-sup} stability condition \citep{Hughes_Franca_Balestra_CMAME_1986_v59_p85,COMSOL_v4.3b}. 
Typical structured finite elements utilized in this paper are 
shown in Figure \ref{Fig:quadrilateral_triangular_ele}. Unless 
mentioned otherwise, all the elements in a quadrilateral mesh 
are squares and all the elements in a triangular mesh are 
right-angled isosceles triangles. In this numerical solution 
study, we have taken $h$ (the maximum element size) to be equal 
to the length of the side for square elements, and to the length 
of the base (or height) for right-angled isosceles triangles.

\subsection{Body force problem}
The test problem is pictorially described in Figure 
\ref{Fig:Test_prob}. The non-dimensional parameters 
used in the numerical simulation are provided in 
Table \ref{Tab:Input_to_COMSOL_for_body_force}. 
The conservative body force is taken as $\rho 
\mathbf{b}(\mathbf{x}) = 10[\mathrm{sin}(\pi x), 
\mathrm{cos}(\pi y)]$ (i.e., $a=10$). 
Velocity boundary condition is prescribed on the 
entire boundary (i.e., $\Gamma^{v} = \partial 
\Omega$). For the Darcy-Brinkman model, we 
assume $\mathbf{v}^{\mathrm{p}}(\mathbf{x}) = 
\mathbf{0}$; and for the Darcy model, we 
assume $v_n(\mathbf{x}) = 0$.  
Figure \ref{Fig:test_prob_diss} shows the minimum 
total mechanical power and the minimum dissipation 
with mesh refinement. The numerical results for the 
reciprocal relation with mesh refinement are shown in 
Figure \ref{Fig:test_prob_Betti}. All the numerical 
results are in accordance with the theoretical 
predictions for this test problem. 

\begin{table}
  \begin{center}
    \caption{\textsf{Body force problem:}~Non-dimensional 
      parameters used in the problem.
      \label{Tab:Input_to_COMSOL_for_body_force}}
    \begin{tabular}{c c}
      \hline
      Parameter                 & Value \\ \hline\hline
      $\alpha$		        & 1 \\
      $\mu$		        & 1 and 0.001 \\
      $\rho$		        & 1 \\
      $L$		        & 1 \\
      $\mathbf{b}(\mathbf{x})$	& $a[\mathrm{sin}(\pi x), \mathrm{cos}(\pi y)]$ \\      
      \hline
    \end{tabular}
  \end{center} 
\end{table}

\subsection{Lid-driven cavity problem}
The two-dimensional lid-driven cavity problem is 
a benchmark study widely used to investigate the accuracy 
of numerical formulations for various fluid models
\citep{Burggraf_1966_v24_p113_151,E_Ertuk_2009_v60_p275_294_IJNMF}.
Figure \ref{Fig:lid_driven_cavity_problem} provides 
a pictorial description of the problem. The domain 
of the problem is a bi-unit square. Velocity boundary 
condition is prescribed on the entire boundary (i.e., 
$\Gamma^{v}=\partial \Omega$) which implies that the 
minimum dissipation theorem is also applicable to this 
problem. 
It should be noted that lid-driven cavity problem is 
not compatible with Darcy equations which need only 
the normal component of the velocity to be prescribed 
on the boundary. However, the lid-driven cavity problem 
demands the prescription of the entire velocity vector 
field on the boundary. We shall therefore employ 
Darcy-Brinkman model in our numerical simulations. 

It is crucial to note that the solution to the 
lid-driven cavity problem has singularities at 
the top corners which arise due to velocity 
discontinuity on the boundary 
\citep{Botella_Peyret_1998_v27_p421_433,
  Batchelor_Fluid_Dynamics}. 
%

Herein, we use an adaptive mesh to resolve the 
singularities  in the solution. The non-dimensional 
parameters used in the lid-driven cavity problem are 
presented in Table \ref{Tab:Input_to_COMSOL_for_lid_cavity}. 
Figures 
\ref{Fig:lid_cavity_uni_Q9_4} and \ref{Fig:lid_cavityadapt_Q9_4_top} 
show the uniform structured mesh and the adaptive mesh, respectively. 
For the adaptive mesh, we generate fine grid in the region close to 
top lid (see Figure \ref{Fig:lid_cavityadapt_Q9_4_top}). 
Figures \ref{Fig:Br_cavity_min_diss} and 
\ref{Fig:Br_fine_top_cavity_min_diss} show 
variation of the minimum dissipation with 
mesh refinement for respective grids. 
Under the adaptive mesh, the dissipation 
decreased uniformly and converged to a 
constant value with mesh refinement. On 
the other hand, the dissipation increased 
monotonically with mesh refinement under 
the structured mesh. More importantly, due 
to the presence of singularities and pollution 
error, the dissipation did not hit a plateau 
even for very fine meshes (i.e., $1/h \geq 220$). 
The dissipation reached a plateau relatively 
quickly under the adaptive mesh (say $1/h = 10$). 
\emph{This problem clearly illustrates that the 
minimum dissipation theorem can be used to 
identify pollution errors and to 
check performance of adaptive mesh in numerical 
solutions which is one of the main findings 
of this paper.}

\begin{table}[t]
  \caption{\textsf{Lid-driven cavity problem:}~Non-dimensional 
    parameters used in the problem.
    \label{Tab:Input_to_COMSOL_for_lid_cavity}}
  \centering
  \begin{tabular}{c c}
    \hline
    Parameter              & Value \\
    \hline\hline
    $\alpha$		& 1 \\
    $\mu$		& 1 \\
    $\rho$		& 1 \\
    $L$		& 1 \\
    \hline
  \end{tabular}
\end{table}

\subsection{Pipe bend problem}
\label{Pipe_bend_problem}
As another application of the proposed techniques, 
we consider an engineering problem commonly 
found in the fluid mechanics literature, which 
is called the pipe bend problem (for example, see 
References \citep{T_Borrvall_Topology_pipe_bend_v41_p77_2003_IJNMF, AG_Hansen_topology_pipe_v30_p_181_2005_Struct_Multidisc_Optim, N_Aage_topology_pipe_v35_p175_2008_Struc_Multi_Optim, 
G_Pingen_optim_pipe_v38_p910_2009_Compt_Fluids, 
VJ_Challis_Topology_pipe_bend_v79_p1284_2009_IJNME, 
M_Hassine_Topology_pipe_bend_2012_chapte10}).
In the pipe bend problem, the computational 
domain $\Omega$ is a square with $L = 1$.

\subsubsection{Velocity boundary condition}
The problem is pictorially described in Figure \ref{Fig:pipe_bend}. 
An inflow parabolic velocity is enforced on a part of 
the left boundary and an outflow parabolic 
velocity on a segment of the bottom. 
Each parabolic velocity profile 
has a unit maximum value (i.e., ${v}_{{x}_{max}}=1$ 
or ${v}_{{y}_{max}}=1$).
Elsewhere, the homogeneous 
velocity is prescribed  (i.e., $\mathbf{v}^{\mathrm{p}}
(\mathbf{x}) = \mathbf{0}$ for the Darcy-Brinkman model 
and $v_{n}(\mathbf{x}) = 0$ for the Darcy model).
The velocity boundary condition makes 
the problem compatible with the total mechanical 
power and the dissipation theorems. 
The non-dimensional parameters used in the problem 
are presented in Table 
\ref{Tab:Input_to_COMSOL_for_2D_pipe_bend}.
Figure \ref{Fig:pipe_bend_diss} shows the 
variation of minimum total mechanical power and minimum 
dissipation with mesh refinement for the 
quadrilateral and triangular elements. 
The result of the numerical solutions verification 
are presented in Figure \ref{Fig:Dr_vs_Br_pipe_min_diss} 
and \ref{Fig:Dr_vs_Br_pipe_min_mech_pw} for the 
minimum dissipation and the total mechanical power, 
respectively. The numerical error decreases and 
converges uniformly.
\subsubsection{Parabolic velocity-pressure boundary condition}
A pictorial description of the problem is given by Figure 
\ref{Fig:2D_press_pipe_bend}. 
The traction is prescribed on a part of the bottom 
boundary (i.e., $\mathbf{t}^\mathrm{p}(\mathbf{x})
=-p_{\mathrm{atm}}\widehat{\mathbf{n}}(\mathbf{x})$ 
on $\Gamma^t$). 
The velocity has parabolic 
profile with unit maximum value (i.e., ${v}_{{x}_{max}}=1$) 
prescribed on a segment of the left boundary. 
Elsewhere, the homogeneous velocity is enforced. 
On account of the traction and parabolic velocity boundary 
conditions, current problem is not compatible with 
the minimum dissipation and reciprocal theorems. It is 
only compatible with the total mechanical power 
theorem.
It should be noted that the solution to the 
problem has the singularity at near corners of 
the outlet (i.e., $\Gamma^t$). 
Hence, we again use an adaptive mesh to resolve the 
pollution in the solution.
The non-dimensional parameters using in the 
problem are provided in 
Table \ref{Tab:Input_to_COMSOL_for_2D_pipe_bend}.
Figure \ref{Fig:press_pipe_min_mech_pw} depicts 
the variation of the minimum total mechanical 
power with mesh refinement for the quadrilateral 
and triangular elements.
The top figures show the uniform structured 
and adaptive meshes.
Under the structured mesh, the total mechanical 
power increased uniformly with mesh refinement 
due to the polluted area in the computational domain.
The error decreased uniformly and converged 
to a constant value with mesh refinement using 
the adaptive mesh.
So, in addition to error estimation capability, 
the total mechanical power theorem can be used 
to identify the pollution errors in the numerical 
solutions and check the performance 
of adaptive mesh.
%
\begin{table}
\begin{center}
\caption {\textsf{Pipe bend problem:}~Non-dimensional 
parameters of the parabolic velocity boundary condition problem.
\label{Tab:Input_to_COMSOL_for_2D_pipe_bend}}
\begin{tabular}{c c}
\hline
Parameter              & Value \\
\hline\hline
$\alpha$		& 1 and 10 \\
$\mu$		& 1 and 0.001 \\
$\rho$		& 1 \\
$\mathbf{b}(\mathbf{x})$		& [1, 1] \\
$L$		& 1 \\
\hline
\end{tabular}
\end{center} 
\end{table}
\subsection{Pressure slab problem}
Figure \ref{Fig:press_slab} provides a pictorial 
description of the problem. The non-dimensional 
parameters used in the numerical simulation are 
provided in Table \ref{Tab:Input_to_COMSOL_for_slab}.
The domain is a $W \times L$ rectangle. 
The homogeneous velocity 
boundary condition is enforced on the top and 
bottom sides of the boundary. 
The traction is prescribed on the left side (i.e., 
$\mathbf{t}^\mathrm{p}(\mathbf{x})=-p_{\mathrm{inj}} 
\widehat{\mathbf{n}}(\mathbf{x})$ on $\Gamma^t_1$) 
and on the right side (i.e., 
$\mathbf{t}^\mathrm{p}(\mathbf{x})=-p_{\mathrm{atm}} 
\widehat{\mathbf{n}}(\mathbf{x})$ on $\Gamma^t_2$). 
We shall use this test problem to assess the 
accuracy of numerical solutions with respect 
to the reciprocal relation.  
Figure \ref{Fig:Br_Betti_slab_no_bodyf} depicts 
the variation of the error in the reciprocal 
relation with mesh refinement for the triangular 
and quadrilateral grids. 
The error in the reciprocal relations under Darcy 
equations is very close to zero for all the meshes. 
The error under Darcy-Brinkman equations decrease 
uniformly with mesh refinement. All the numerical 
results are in accordance with the theoretical 
predictions. 
\begin{table}
\begin{center}
\caption{\textsf{Pressure slab problem:}~Non-dimensional 
  parameters used in the problem.
\label{Tab:Input_to_COMSOL_for_slab}}
\begin{tabular}{c c}
\hline
Parameter              & Value \\
\hline\hline
$\alpha$		& 1 \\
$\mu$		& 0.001 \\
$\rho$		& 1 \\
$p_{\mathrm{inj}}$	& 5 and 7.5 \\
$p_{\mathrm{atm}}$	& 1 \\
$L$		& 1 \\
$W$		& 0.2 \\
\hline
\end{tabular}
\end{center}
\end{table}

\subsection{Pressure driven problem}
The domain is a square with $L = 1$. 
A pictorial description of the problem 
is given in Figure \ref{Fig:press_driven}. 
The traction is prescribed on the left 
boundary (i.e., $\mathbf{t}^\mathrm{p}
(\mathbf{x})=-p_{\mathrm{inj}} \widehat
{\mathbf{n}}(\mathbf{x})$ on $\Gamma^t_1$) 
and on the middle of the right boundary 
(i.e., $\mathbf{t}^\mathrm{p}(\mathbf{x})
=-p_{\mathrm{atm}} \widehat{\mathbf{n}}
(\mathbf{x})$ on $\Gamma^t_2$). 
Elsewhere, homogeneous velocity boundary 
condition is enforced. 
Due to the prescription of the traction 
on a part of the boundary, this problem 
is incompatible with the minimum dissipation 
theorem but is compatible with the reciprocal 
relation and total mechanical power theorems. 
Herein, we present the results for the 
reciprocal relation. The non-dimensional 
parameters used in the numerical simulation 
are provided in Table 
\ref{Tab:Input_to_COMSOL_for_press_driven}. 
Figure \ref{Fig:Betti_press_driven} shows 
the variation of the error in the reciprocal 
relation with mesh refinement for triangular 
and quadrilateral meshes. 
%
The convergence under uniform structured meshes 
is uniformly to zero which is expected by theorem.
%
\begin{table}
\begin{center}
\caption{\textsf{Pressure driven problem:}~Non-dimensional 
  parameters used in the problem.
\label{Tab:Input_to_COMSOL_for_press_driven}}
\begin{tabular}{c c}
\hline
Parameter             & Value \\ \hline\hline
$\alpha$              & 1 \\
$\mu$		      & 1 and 0.001 \\
$\rho$	              & 1 \\
$p_{\mathrm{inj}}$        & 5 and 7.5 \\
$p_{\mathrm{atm}}$	      & 1 \\
$L$		      & 1 \\
\hline
\end{tabular}
\end{center}
\end{table}

\subsection{Vorticity results}
The maximum principle given by Theorem \ref{Theorem:Vorticity} 
can be used to assess the accuracy of numerical solutions to 
the Darcy-Brinkman equations by checking whether the non-negative 
maximum and non-positive minimum of the vorticity occur 
on the boundary. Figure \ref{Fig:Metrics_vorticity_Brinkman} 
shows that the maximum principle is satisfied for various 
problems under the steady-state Darcy-Brinkman equations.

\section{SYNTHETIC RESERVOIR DATA:~MARMOUSI DATASET}
\label{Sec:S5_Brinkman_Marmousi}
We will now solve an idealized reservoir problem using 
a popular synthetic dataset -- the so-called (smooth) 
Marmousi dataset \citep{Versteeg_marmousi_1990,
Versteeg_marmousi_1991,Versteeg_1993,Marmousi_Online,
Marmousi_Rice}. The dataset provides spatially varying 
speed of sound on a $384 \times 122$ grid. We have 
assumed that the permeability scales linearly with 
the values provided by the dataset. This is just an 
arbitrary choice to generate a heterogeneous dataset 
for permeability. However, it should be noted that the 
conclusions that will be drawn here will be valid even 
if one uses another dataset for the permeability. Figure 
\ref{Fig:Brinkman_Marmousi} shows the contours of Marmousi 
dataset. 

The boundary value problem of the reservoir is pictorially 
described in Figure \ref{Fig:reservoir}. This computational 
domain has been employed in some recent works (e.g., 
\citep{Nakshatrala_Rajagopal_IJNMF_2011_v67_p342}). 
All these works have assumed homogeneous medium properties, 
and did not use a reservoir data like the Marmousi dataset. 
Moreover, these studies did not address the use of 
\emph{a posteriori} techniques to access numerical 
accuracy which is the main focus of the current 
paper. The parameters used in this problem are 
provided in Table \ref{Tab:Input_to_COMSOL_for_Marmousi}. 
Figure \ref{Fig:Betti_Marm_smooth_reservoir} shows that 
the errors in the reciprocal relation are larger under 
uniform structured meshes than under adaptive meshes. This 
is due to the fact that uniform structured meshes suffer 
from pollution errors due to the singularity near the 
production well. The variation of the error in the reciprocal 
relation with mesh refinement provides guidelines on how much 
refinement is required to obtain solution of some desired 
accuracy especially in those cases where there are no analytical 
solutions. Figure \ref{Fig:Vort_Marm_smooth_reservoir} shows the 
magnitude of the vorticity, and one can see from the figure that 
the maximum principle for the vorticity is satisfied. 
%
\begin{table}[h]
  \begin{center}
    \caption{\textsf{Synthetic reservoir problem:}~Non-dimensional 
      parameters used in the problem.
      \label{Tab:Input_to_COMSOL_for_Marmousi}}
    \begin{tabular}{c c}
      \hline
      Parameter              & Value \\
      \hline\hline
      $\mu$		& 0.001 \\      
      $\rho$		& 1 \\
      $p_{\mathrm{inj}}$	& 5 and 7.5 \\
      $p_{\mathrm{atm}}$	& 1    \\
      $L$		& 384    \\
      $H$		& 384/2    \\
      $W$		& 384$\times$0.1  \\
      $k$		& Marmousi dataset/$L^2$ \\
      \hline
    \end{tabular}
  \end{center}
\end{table}

\section{TRANSIENT CASE}
\label{Sec:S6_Brinkman_Transient}

\subsection{Governing equations}
Let us denote the time interval of interest by 
$\mathcal{I}$, and the time by $t \in \mathcal{I}$. 
The unsteady governing equations under the Darcy 
model take the following form:
\begin{subequations}
  \begin{alignat}{2}
    &\rho \frac{\partial \mathbf{v}}{\partial t} + 
    \alpha \mathbf{v} + \mathrm{grad}[p] = \rho \mathbf{b}(\mathbf{x},t) && 
    \quad \mathrm{in} \; \Omega \times \mathcal{I} \\
    &\mathrm{div}[\mathbf{v}] = 0 && \quad \mathrm{in} \; \Omega \times \mathcal{I} \\
    &\mathbf{v}(\mathbf{x},t)\cdot \widehat{\mathbf{n}}(\mathbf{x}) = v_n(\mathbf{x},t) 
    && \quad \mathrm{on} \; \Gamma^v \times \mathcal{I} \\
    &p(\mathbf{x},t) = p_0(\mathbf{x},t) && \quad \mathrm{on} \; \Gamma^t 
    \times \mathcal{I} \\
    &\mathbf{v}(\mathbf{x},t=0) = \mathbf{v}_0(\mathbf{x}) 
    && \quad \mathrm{in} \; \Omega 
  \end{alignat}
\end{subequations}
Of course, the convective term $\mathrm{grad}
[\mathbf{v}]\mathbf{v}$ is neglected.
We assume that the coefficient of viscosity of the fluid 
and the permeability of the porous solid to be constants, 
and hence the drag coefficient is constant. We further 
assume that the density is homogeneous, and the body 
force is assumed to be conservative.
The unsteady Darcy-Brinkman equations can be written as follows:
\begin{subequations}
  \begin{alignat}{2}
    &\rho \frac{\partial \mathbf{v}}{\partial t} + 
    \alpha \mathbf{v} + \mathrm{grad}[p] - \mathrm{div}
           [2 \mu \mathbf{D}] = \rho \mathbf{b}(\mathbf{x},t) && 
           \quad \mathrm{in} \; \Omega \times \mathcal{I} \\
           &\mathrm{div}[\mathbf{v}] = 0 && \quad \mathrm{in} \; \Omega 
           \times \mathcal{I} \\
           &\mathbf{v}(\mathbf{x},t) = \mathbf{v}^{\mathrm{p}}(\mathbf{x},t) 
           && \quad \mathrm{on} \; \Gamma^v \times \mathcal{I} \\
           &\mathbf{T} 
           \widehat{\mathbf{n}}(\mathbf{x}) = \mathbf{t}^{\mathrm{p}}
           (\mathbf{x},t) && \quad \mathrm{on} \; \Gamma^t \times 
           \mathcal{I} \\
           &\mathbf{v}(\mathbf{x},t=0) = \mathbf{v}_0(\mathbf{x}) 
           && \quad \mathrm{in} \; \Omega 
  \end{alignat}
\end{subequations}
where $\mathbf{v}^{\mathrm{p}}(\mathbf{x},t)$ is the 
prescribed velocity vector, and $\mathbf{t}^{\mathrm{p}}
(\mathbf{x},t)$ is the prescribed traction.
%
\subsection{Mathematical properties}
Under unsteady Darcy equations, the vorticity 
satisfies the following equation:
\begin{align}
  \rho \frac{\partial \boldsymbol{\omega}}{\partial t} 
  + \alpha \boldsymbol{\omega} = \boldsymbol{0}
\end{align}
which is an ordinary differential equation 
at each spatial point. The solution takes 
the following form: 
\begin{align}
  \boldsymbol{\omega}(\mathbf{x},t) = \boldsymbol{\omega}_0(\mathbf{x}) 
  \exp\left[-\frac{\alpha t}{\rho}\right]
\end{align}
This means that the decay of vorticity should be 
exponential. One can check whether the numerical 
solutions exhibit this trend by plotting in log 
scale the each component of the vorticity with 
respect to time should, and this should be a 
straight line with slope $-\alpha/\rho$. If 
this trend is not satisfied for a given mesh 
and given time-step, does refining the grid 
spacing and the time-step improve the trend? 
Of course, one can check whether the vorticity 
goes to zero for large times.  
The vorticity under unsteady Darcy-Brinkman 
equations satisfies the following equation:
\begin{align}
  \rho \frac{\partial \boldsymbol{\omega}}{\partial t} + 
  \alpha \boldsymbol{\omega} - \mu \Delta \boldsymbol{\omega} 
  = \boldsymbol{0} \quad \mathrm{in} \; \Omega \times \mathcal{I}
\end{align}
where $\Delta$ is the Laplacian operator. The above 
equation is a homogeneous linear parabolic partial 
differential equation, which is known to satisfy a 
maximum principle \citep{Pao}. This implies that both the maximum 
and the minimum will occur either in the initial 
condition or on the boundary. One can check whether 
numerical solutions satisfy the aforementioned 
maximum principle. Also, whether refining the grid 
spacing and time-steps affect the performance of 
numerical solutions with respect to this metric. 
One can devise any test problem as long as the 
following assumptions are met: (i) $\mu$ is 
constant, (ii) permeability is homogeneous, 
and (iii) the body force is conservative.
%
\subsection{Representative numerical results}
The theoretical results presented in this section 
are corroborated numerically in Figures 
\ref{Fig:Vorticity_trans_Darcy_slab}--\ref{Fig:Vorticity_trans_Brinkman}.
The vorticity results for the pressure slab problem  
under the transient Darcy model are provided in 
Figures \ref{Fig:Vorticity_trans_Darcy_slab}.
(Recall that Figure \ref{Fig:press_slab} provided a 
pictorial description of the pressure slab problem.) 
The non-dimensional parameters used in the 
numerical simulation are presented in Table 
\ref{Tab:Input_to_COMSOL_for_slab}, and the 
initial condition is provided in Table 
\ref{Tab:IC_for_slab}.
Figure \ref{Fig:Vorticity_trans_Darcy_slab} shows 
that $\mathrm{log}(\frac{\omega}{\omega_0})$ is a 
straight line with slope equal to $\frac{\alpha}{\rho}$ 
under the mesh and time refinements for various 
spatial points in the solution of the transient 
Darcy equations (for the current problem 
$\frac{\alpha}{\rho}=-1$).
Figure \ref{Fig:Vorticity_trans_Brinkman} verifies 
the maximum principle for vorticity for various 
test problems under transient Darcy-Brinkman 
equations. In all the cases, the non-negative 
maximum and non-positive minimum of the 
vorticity occur on the boundary, which 
agree with the mathematical theory presented 
above. 

\begin{table}
\begin{center}
\caption{\textsf{Pressure slab problem:}~Initial condition used in the problem.
\label{Tab:IC_for_slab}}
\begin{tabular}{c c}
\hline
Parameter              & Value \\
\hline\hline
$p$		& $p_{\mathrm{atm}}$ \\
$v_x=v_y$	&	$\sin(\pi x/W) \sin(\pi y/L)$ \\
\hline
\end{tabular}
\end{center}
\end{table}

%

\section{CONCLUDING REMARKS}
\label{Sec:S8_Brinkman_Closure}
We presented a new methodology 
for solution verification which is independent 
of utilized numerical formulation. We called 
it mechanics-based solution verification.
To employ this approach 
we developed four important properties 
(minimum dissipation theorem, minimum 
total mechanical power, reciprocal relation, 
and maximum principle for vorticity) that 
the solutions under the Darcy and the 
Darcy-Brinkman models satisfy.
However, the proposed technique is 
not merely restricted to porous media type 
problems and one can easily extend it to 
other models. We also presented various 
test problems can serve as benchmark 
problems to verify numerical implementation 
of solvers for the Darcy and Darcy-Brinkman 
models. Results showed that the proposed 
technique can be effectively used to 
assess the accuracy of numerical solutions, 
identify numerical pollution, 
and check the performance of adaptive mesh. 
An attractive feature is that these 
properties can be verified for any given 
problem (i.e., needed not be one of the 
benchmark problems), and for any computational 
domain. For example, if the problem involves 
prescribing velocity boundary condition on 
the entire boundary, then one plots the 
dissipation with respect to mesh refinement. 
Some of the main conclusions are: 
\begin{enumerate}[(a)]
\item If the numerical formulation is converging, the 
  dissipation, total mechanical power, and reciprocal 
  relation should decrease with mesh refinement. If 
  this does not occur, one needs to suspect that 
  there are singularities in the solutions or that 
  the numerical formulation does not perform well 
  with respect to the local mass balance property.
\item It has been shown that the minimum dissipation, 
  minimum total mechanical power, 
  and reciprocal relation theorems can be 
  utilized to identify pollution errors in 
  numerical solutions. The theorems can also be 
  used to assess whether a given type of mesh 
  will be able to resolve singularities in 
  the solution. This can be assessed by 
  creating a series of hierarchical meshes and 
  plotting for example the dissipation, with 
  respect to characteristic 
  mesh size. A given type of mesh will resolve 
  singularities in the solution and will not 
  be affected by pollution errors if the total 
  dissipation decreases uniformly and reaches 
  a plateau with a hierarchical mesh refinement.
\item The non-negative maximum vorticity and the 
  non-positive minimum vorticity under Darcy-Brinkman 
  equations with homogeneous isotropic medium properties 
  should occur on the boundary. 
\item Log plot of vorticity with respect to time, 
under transient Darcy models, is a line with slope 
$-\frac{\alpha}{\rho}$.

\end{enumerate}
The proposed \emph{a posteriori} techniques can be 
invaluable additions to the usual repertoire of 
methods for verification -- the method of exact 
solutions (MES) and the method of manufactured 
solutions (MMS).

%
\section*{APPENDIX:~UNIQUENESS OF SOLUTION}
For completeness and pedagogical value, we now show 
that the uniqueness of solution under Darcy-Brinkman 
equations is a direct consequence of the minimum 
dissipation inequality. One can construct a proof 
for the uniqueness of solution under Darcy equations 
on similar lines. 

\begin{theorem}[\textsf{Uniqueness theorem}]
  \label{Theorem:Uniqueness_theorem}
  The solution to Darcy-Brinkman equations 
  \eqref{Eqn:Brinkman_LM}--\eqref{Eqn:Brinkman_traction_BC} 
  is unique up to an arbitrary constant for the pressure 
  given $\mathbf{b}(\mathbf{x})$, 
  $\mathbf{v}^{\mathrm{p}}(\mathbf{x})$ and 
  $\mathbf{t}^{\mathrm{p}}(\mathbf{x})$. 
\end{theorem}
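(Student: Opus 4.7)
The plan is to reduce uniqueness to a variational argument: any two solutions must agree because the governing equations characterize a unique minimizer of a strictly convex functional. First, I would let $(\mathbf{v}_1, p_1)$ and $(\mathbf{v}_2, p_2)$ be two solutions of \eqref{Eqn:Brinkman_LM}--\eqref{Eqn:Brinkman_traction_BC} sharing the same data $\{\rho\mathbf{b}, \mathbf{v}^{\mathrm{p}}, \mathbf{t}^{\mathrm{p}}\}$. Both are kinematically admissible (they satisfy $\mathrm{div}[\mathbf{v}_i] = 0$ and the velocity boundary condition by definition), and by linearity the difference $\delta\mathbf{v} := \mathbf{v}_1 - \mathbf{v}_2$ and $\delta p := p_1 - p_2$ solve the homogeneous Darcy-Brinkman system: zero body force, $\delta\mathbf{v} = \mathbf{0}$ on $\Gamma^v$, and $\delta\mathbf{T}\widehat{\mathbf{n}} = \mathbf{0}$ on $\Gamma^t$.

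Next, I would exploit the minimum total mechanical power theorem (Theorem \ref{Theorem:Minimum_total_mechanical_power}) in its sharp form. Re-reading that proof, the chain of manipulations can be read as an identity rather than an inequality, namely $\varepsilon_{\mathrm{TMP}}[\widetilde{\mathbf{v}}] - \varepsilon_{\mathrm{TMP}}[\mathbf{v}] = \frac{1}{2}\Phi[\widetilde{\mathbf{v}} - \mathbf{v}]$ whenever $\mathbf{v}$ is a Darcy-Brinkman velocity and $\widetilde{\mathbf{v}}$ is kinematically admissible, since the only inequality used was $\Phi[\delta\mathbf{v}] \geq 0$. Applying this with $\mathbf{v} = \mathbf{v}_1$ and $\widetilde{\mathbf{v}} = \mathbf{v}_2$, and again with roles swapped, and noting that both $\mathbf{v}_1$ and $\mathbf{v}_2$ are minimizers so $\varepsilon_{\mathrm{TMP}}[\mathbf{v}_1] = \varepsilon_{\mathrm{TMP}}[\mathbf{v}_2]$, I obtain $\Phi[\delta\mathbf{v}] = 0$.

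Then I would invoke the observation recorded in the paper immediately after the definition of $\Phi$, that $\Phi[\cdot]$ is equivalent to the $(H^{1}(\Omega))^{nd}$ norm under the Darcy-Brinkman model, so it is a genuine norm. Hence $\Phi[\delta\mathbf{v}] = 0$ forces $\delta\mathbf{v} \equiv \mathbf{0}$ in $\Omega$, establishing $\mathbf{v}_1 = \mathbf{v}_2$. Substituting back into the difference of the two momentum balances yields $\mathrm{grad}[\delta p] = \mathbf{0}$ in $\Omega$, so $\delta p$ is an arbitrary constant, which is precisely the content of the theorem.

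The main point to watch is that Theorem \ref{Theorem:Minimum_dissipation} (the minimum dissipation inequality explicitly named by the author as the tool) is proved under the restriction $\Gamma^v = \partial\Omega$, whereas the present theorem is stated for mixed boundary conditions. Using Theorem \ref{Theorem:Minimum_total_mechanical_power} sidesteps this restriction because the total mechanical power functional already accounts for a nonempty traction boundary, and the homogeneous difference problem still falls under its hypotheses (the zero body force is trivially conservative). A fully self-contained alternative is to dot the homogeneous momentum balance with $\delta\mathbf{v}$ and integrate by parts, using $\delta\mathbf{v} = \mathbf{0}$ on $\Gamma^v$, $\delta\mathbf{T}\widehat{\mathbf{n}} = \mathbf{0}$ on $\Gamma^t$, $\mathrm{div}[\delta\mathbf{v}] = 0$, and symmetry of $\delta\mathbf{D}$ to reach $\Phi[\delta\mathbf{v}] = 0$ in a single line, essentially repeating the calculations already carried out in the proofs of Theorems \ref{Theorem:Minimum_total_mechanical_power} and \ref{Theorem:Minimum_dissipation}.
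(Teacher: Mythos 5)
Your argument is correct, but it is packaged differently from the paper's. The paper does not actually invoke either minimization theorem in the appendix (despite advertising uniqueness as ``a direct consequence of the minimum dissipation inequality''): it forms the quantity $\mathcal{I}=\Phi[\mathbf{v}_1-\mathbf{v}_2]$ directly, integrates the viscous term by parts, kills the boundary integral using the shared velocity and traction data on $\Gamma^{v}$ and $\Gamma^{t}$, and substitutes the momentum balance to show that the viscous part of $\mathcal{I}$ equals minus the drag part, whence $\mathcal{I}=0$; positivity of $\alpha$ and $\mu$ then gives $\mathbf{v}_1=\mathbf{v}_2$ and $\mathrm{grad}[p_1-p_2]=\mathbf{0}$. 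That is precisely the ``fully self-contained alternative'' you sketch in your last paragraph. Your primary route --- reading the proof of Theorem \ref{Theorem:Minimum_total_mechanical_power} as the sharp identity $\varepsilon_{\mathrm{TMP}}[\widetilde{\mathbf{v}}]-\varepsilon_{\mathrm{TMP}}[\mathbf{v}]=\tfrac{1}{2}\Phi[\widetilde{\mathbf{v}}-\mathbf{v}]$ and applying it with the roles of the two solutions interchanged --- is a valid strict-convexity argument resting on the same integrations by parts, and it buys reuse of an established result plus a cleaner conceptual statement (the solution is the unique minimizer of a strictly convex functional). What the paper's direct computation buys is self-containedness: it does not require the reader to upgrade the stated inequality of Theorem \ref{Theorem:Minimum_total_mechanical_power} to an identity. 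Your observation that Theorem \ref{Theorem:Minimum_dissipation} cannot be used as stated because it assumes $\Gamma^{v}=\partial\Omega$ while the uniqueness theorem allows mixed boundary conditions is well taken, and is presumably why the appendix proof is written from scratch rather than citing that theorem.
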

\begin{proof}
  On the contrary assume that $\left\{\mathbf{v}_1(\mathbf{x}),
  p_1(\mathbf{x})\right\}$ and $\left\{\mathbf{v}_2(\mathbf{x}),
  p_2(\mathbf{x})\right\}$ are two solutions to Darcy-Brinkman 
  equations for the prescribed data. Let us consider the 
  following quantity:
  \begin{align*}
    \mathcal{I} := \int_{\Omega} \alpha(\mathbf{x}) 
    \left(\mathbf{v}_1(\mathbf{x}) 
    - \mathbf{v}_2(\mathbf{x})\right) \cdot \left(\mathbf{v}_1 
    (\mathbf{x}) - \mathbf{v}_2(\mathbf{x})\right) \; \mathrm{d} 
    \Omega
    + \int_{\Omega} 2 \mu \left(\mathbf{D}_1(\mathbf{x}) 
    - \mathbf{D}_2(\mathbf{x}) \right) \cdot 
    \left(\mathbf{D}_1(\mathbf{x}) - \mathbf{D}_2(\mathbf{x}) 
    \right) \; \mathrm{d} \Omega
  \end{align*}
  Noting that $\mathrm{div}[\mathbf{v}_1] = 0$ 
  and $\mathrm{div}[\mathbf{v}_2] = 0$, the second 
  integral can be simplified as follows: 
  \begin{align*}
    \int_{\Omega} 2 \mu 
    \left(\mathbf{D}_1(\mathbf{x}) 
    - \mathbf{D}_2(\mathbf{x})\right) \cdot 
    \left(\mathbf{D}_1 (\mathbf{x}) - \mathbf{D}_2
    (\mathbf{x})\right) \; \mathrm{d} \Omega
    &= \int_{\Omega} \left(\mathbf{D}
    _1(\mathbf{x}) -\mathbf{D}_2(\mathbf{x}) \right) \cdot 
    \left(\mathbf{T}_1(\mathbf{x}) - \mathbf{T}_2(\mathbf{x}) 
    \right) \; \mathrm{d} \Omega \\
    &= \int_{\Omega} \mathrm{grad}\left[\mathbf{v}_1(\mathbf{x}) 
      - \mathbf{v}_2(\mathbf{x})\right] \cdot \left(\mathbf{T}_1 
    (\mathbf{x}) - \mathbf{T}_2(\mathbf{x})\right) \; \mathrm{d} 
    \Omega 
  \end{align*}
  In obtaining the above equation, we have used 
  the fact that $\mathbf{T}_1(\mathbf{x})$ and 
  $\mathbf{T}_2(\mathbf{x})$ are symmetric. Using 
  Green's identity, the above equation can be 
  written as follows: 
  \begin{align*}
    \int_{\Omega} 2 \mu 
    \left(\mathbf{D}_1(\mathbf{x}) 
    - \mathbf{D}_2(\mathbf{x})\right) \cdot 
    \left(\mathbf{D}_1 (\mathbf{x}) - \mathbf{D}_2
    (\mathbf{x})\right) \; \mathrm{d} \Omega
    &= \int_{\partial \Omega} (\mathbf{v}_1(\mathbf{x}) - 
    \mathbf{v}_2(\mathbf{x})) \cdot (\mathbf{T}_1(\mathbf{x}) 
    - \mathbf{T}_2(\mathbf{x})) \widehat{\mathbf{n}} 
    (\mathbf{x}) \; \mathrm{d} \Gamma \\
    &- \int_{\Omega} (\mathbf{v}_1(\mathbf{x}) - \mathbf{v}_2(\mathbf{x})) 
    \cdot (\mathrm{div}[\mathbf{T}_1] - \mathrm{div}[\mathbf{T}_2]) 
    \; \mathrm{d} \Omega
  \end{align*}
  Using boundary conditions and the balance 
  of linear momentum, we get the following:
  \begin{align*}
    \int_{\Omega} 2 \mu 
    \left(\mathbf{D}_1(\mathbf{x}) 
    - \mathbf{D}_2(\mathbf{x})\right) \cdot 
    \left(\mathbf{D}_1 (\mathbf{x}) - \mathbf{D}_2
    (\mathbf{x})\right) \; \mathrm{d} \Omega
    =- \int_{\Omega} \alpha(\mathbf{x}) \left(\mathbf{v}_1(\mathbf{x}) 
    - \mathbf{v}_2(\mathbf{x})\right) \cdot \left(\mathbf{v}_1 
    (\mathbf{x}) - \mathbf{v}_2(\mathbf{x})\right) \; \mathrm{d} 
    \Omega	
  \end{align*}
  This implies that $\mathcal{I} = 0 $. 
  Since $\alpha(\mathbf{x}) > 0 \; \forall 
  \mathbf{x} \in \Omega$ and $\mu > 0 \; 
  \mbox{in} \; \Omega$, one can conclude 
  that 
  \begin{subequations}
    \begin{align}
      &\mathbf{v}_1(\mathbf{x}) = \mathbf{v}_2(\mathbf{x}) \quad 
      \forall \mathbf{x} \in \Omega \\
      &\mathbf{D}_1(\mathbf{x}) = \mathbf{D}_2(\mathbf{x}) \quad 
      \forall \mathbf{x} \in \Omega
    \end{align}
  \end{subequations}
  That is, the velocity and symmetric part of 
  velocity vector field are unique. Using the 
  equation for the balance of linear momentum 
  \eqref{Eqn:Brinkman_LM} and the fact that 
  the velocity vector field is unique, one 
  can obtain the following equation: 
  \begin{align}
    \mathrm{grad}\left[p_1(\mathbf{x}) - p_2(\mathbf{x})\right] 
    = \mathbf{0} \quad \forall \mathbf{x} \in \Omega
  \end{align}
  This further implies that 
  \begin{align}
    p_1(\mathbf{x}) - p_2(\mathbf{x}) = p_0 
    \quad \forall \mathbf{x} \in \Omega
  \end{align}
  where $p_0$ is an arbitrary constant. 
  This completes the proof.
\end{proof}

\section*{ACKNOWLEDGMENTS}
The authors acknowledge the support from the Department 
of Energy through Subsurface Biogeochemical Research 
Program. Neither the United States Government nor any 
agency thereof, nor any of their employees, makes any 
warranty, express or implied, or assumes any legal 
liability or responsibility for the accuracy, 
completeness, or usefulness of any information. 
The opinions expressed in this paper are those of 
the authors and do not necessarily reflect that of 
the sponsor(s). 

\bibliographystyle{plainnat}
\bibliography{Master_References/Books,Master_References/Master_References}
\newpage
\clearpage
\begin{figure}
  \centering
  \subfigure[Q2P1 interpolation for nine-node 
    quadrilateral (Q9) element]
    {\includegraphics[scale=1.30]
    {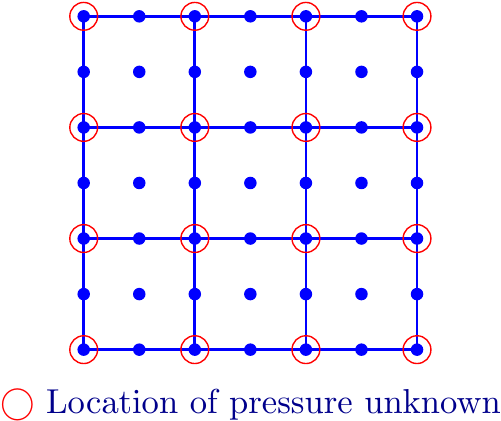}
    \label{Fig:structured_Q9_ele}}
  \qquad
  \subfigure[P2P1 interpolation for six-node triangular 
    (T6) element]{\includegraphics[scale=1.30]
    {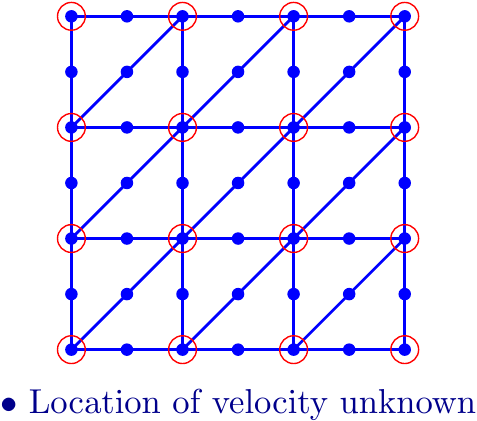}
    \label{Fig:structured_T6_ele}}
  \caption{This figure shows the typical structured 
    finite element meshes employed in the current 
    study. We use Q2P1 and P2P1 mixed interpolations 
    to approximate the unknowns (i.e., second-order 
    interpolation for the velocity field, and 
    first-order interpolation for the pressure 
    field). \label{Fig:quadrilateral_triangular_ele}}
\end{figure}

\let\thefootnote\relax\footnote{Results for 
\textsf{Body force problem}}

\begin{figure}
  \includegraphics[scale=0.35]
  {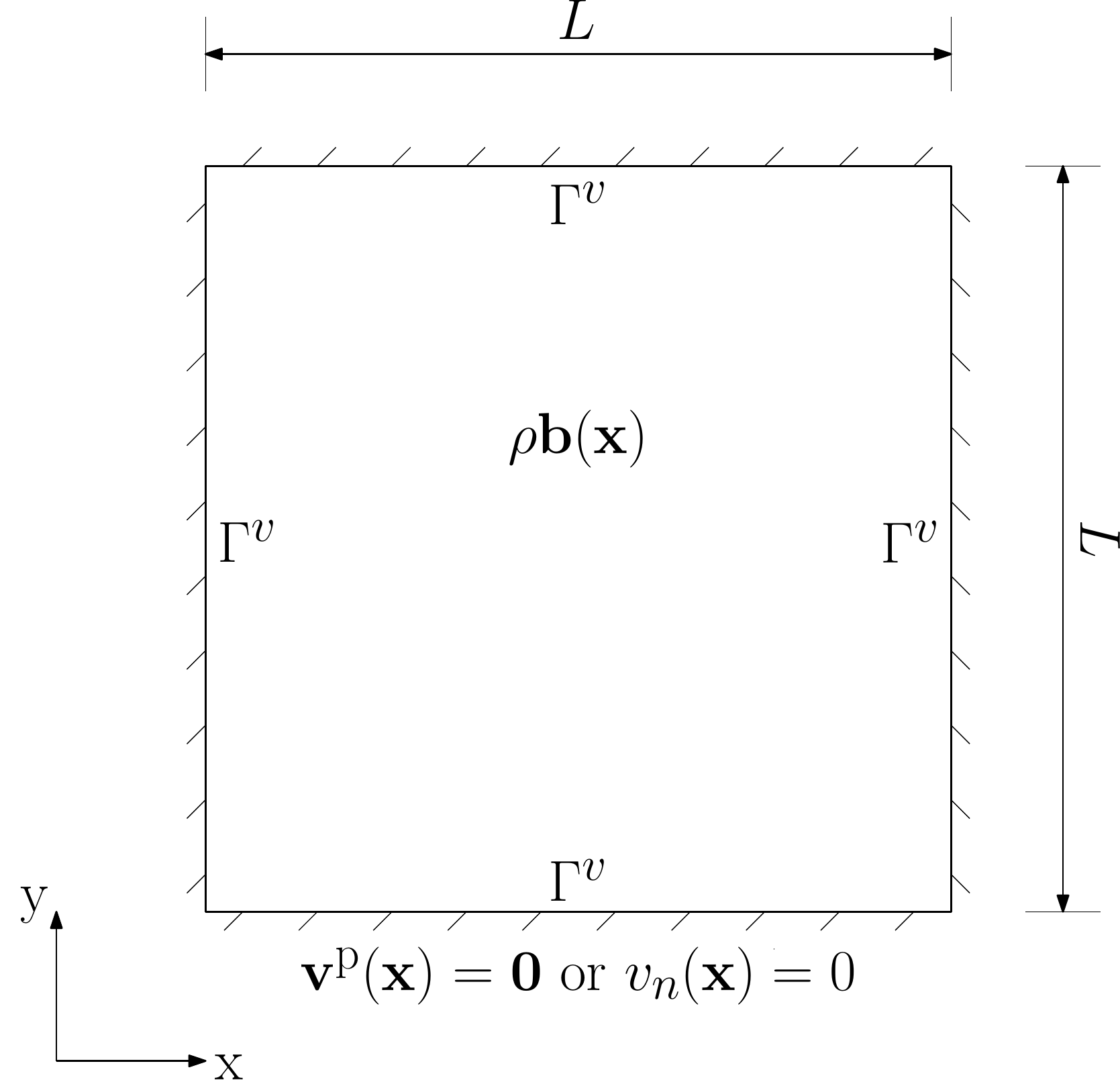}
  \caption{\textsf{Body force problem:}
  ~The computational domain is a square with $L=1$. 
    The prescribed conservative body force is 
    $\rho \mathbf{b}(\mathbf{x}) = 10\times[\mathrm{sin}
      (\pi x), \mathrm{cos}(\pi y)]$.
    Homogeneous velocity is enforced on the 
    entire boundary (i.e., $\Gamma^{v} = \partial 
    \Omega$). That is, $\mathbf{v}^{\mathrm{p}}
    (\mathbf{x}) = \mathbf{0}$ for Darcy-Brinkman 
    equations and $v_{n}(\mathbf{x}) = 0$ for Darcy 
    equations. 
    \label{Fig:Test_prob}}
\end{figure}

\begin{figure}  
  \subfigure[Darcy model with $\mu = 1$.]{
    \includegraphics[clip, scale=0.30]
    {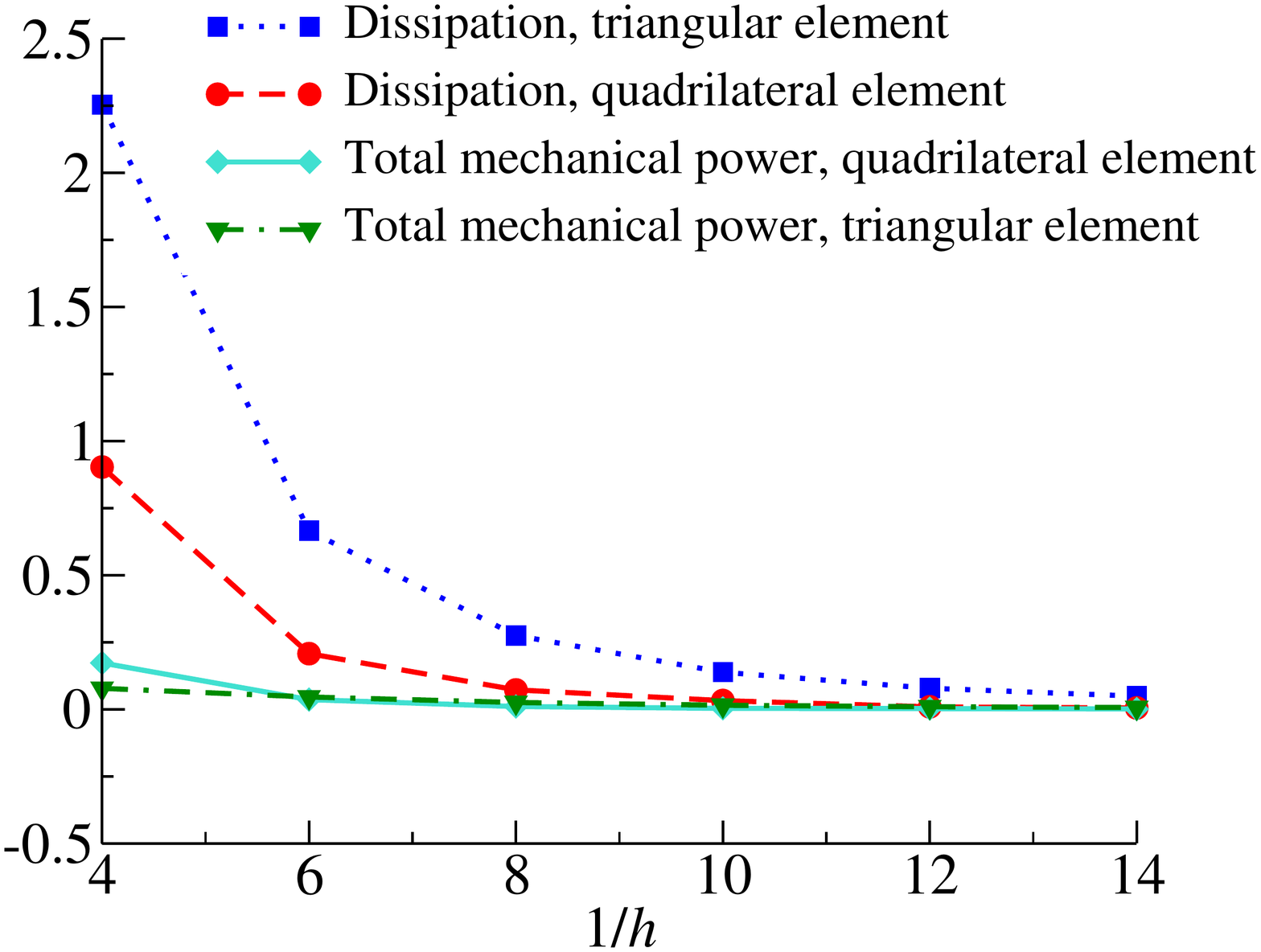} 
    \label{Fig:Dr_test_prob}}
  \subfigure[Darcy-Brinkman model with $\mu = 0.001$.]{
    \includegraphics[clip, scale=0.27]
    {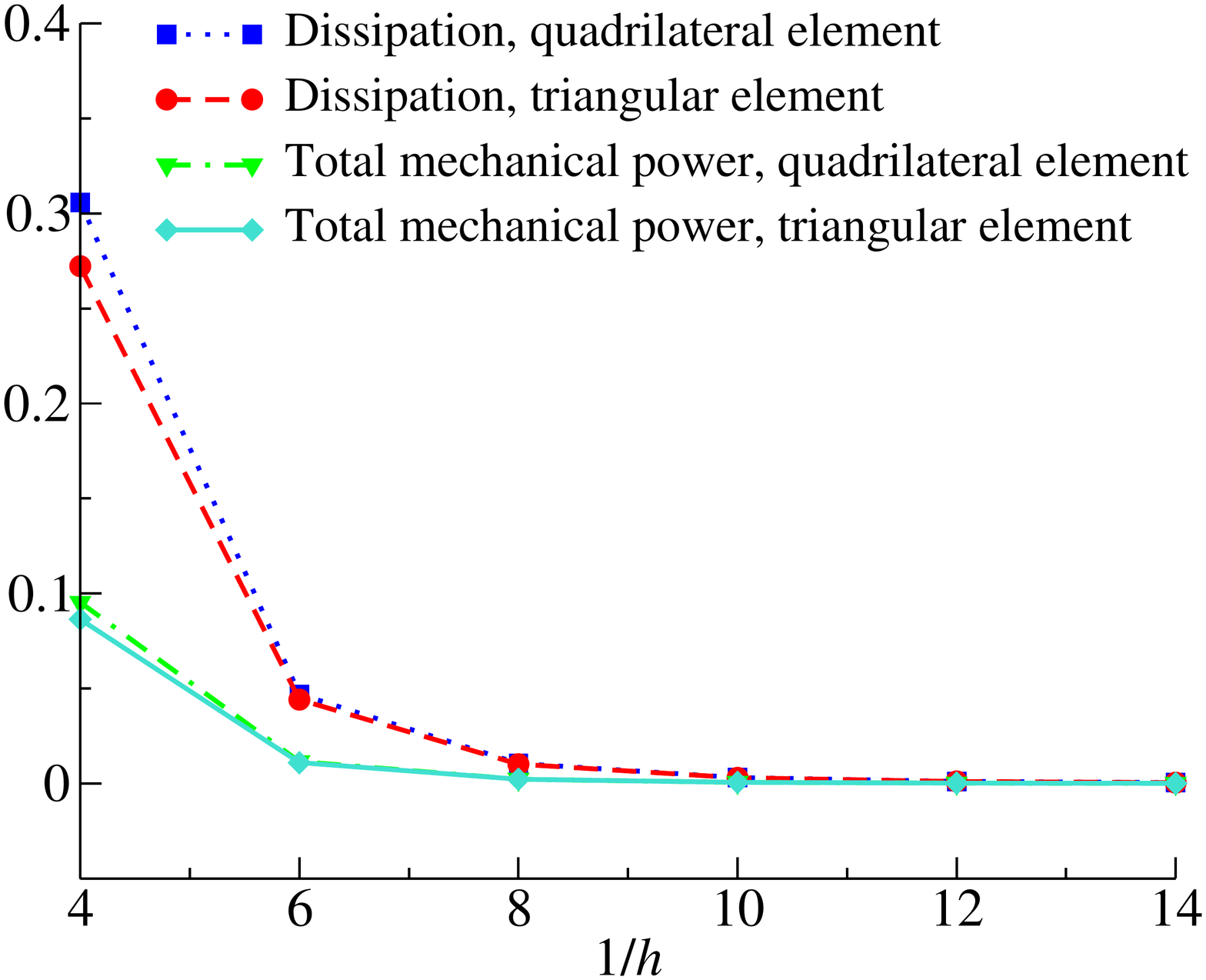} 
    \label{Fig:Br_test_prob}}
  \caption{\textsf{Body force problem:}~This figure 
    shows the variation of the dissipation and the 
    total mechanical power with mesh refinement 
    under the Darcy and Darcy-Brinkman models 
    using quadrilateral and triangular elements.
    The parameters used in this problem are provided 
    in Table \ref{Tab:Input_to_COMSOL_for_body_force}.
    The figure show that the total mechanical power 
    and the dissipation decrease uniformly with mesh 
    refinement, which are in accordance with Theorems 
    \ref{Theorem:Minimum_total_mechanical_power} and 
    \ref{Theorem:Minimum_dissipation}.
    \label{Fig:test_prob_diss}}
\end{figure}

\begin{figure}
  \subfigure[Darcy model with $\mu = 1$.]
  {\includegraphics[scale=0.28,clip]
    {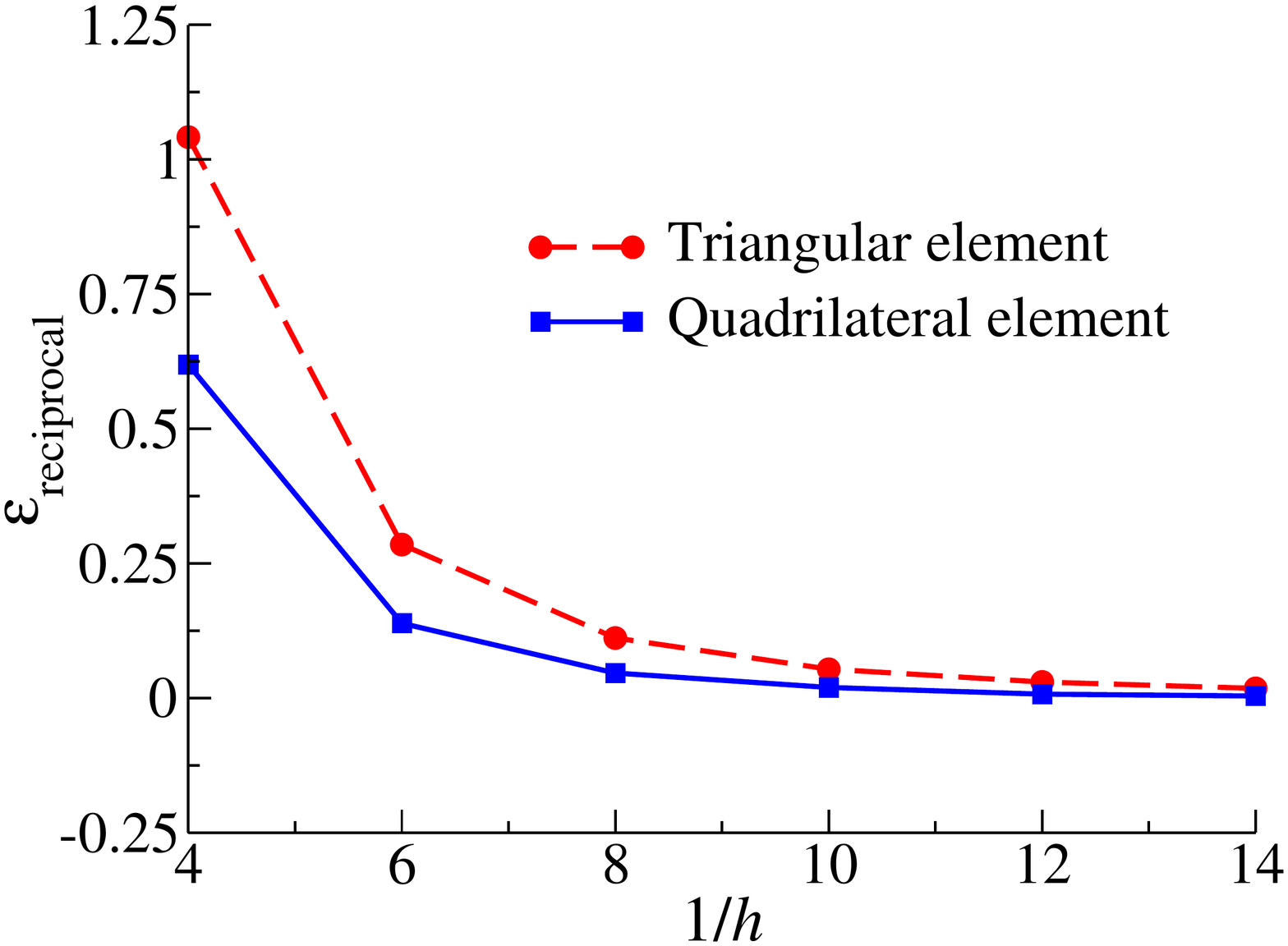}
    \label{Fig:Dr_test_prob_Betti}}
  \qquad
  \subfigure[Darcy-Brinkman model with $\mu = 0.001$.]
  {\includegraphics[scale=0.32,clip]
    {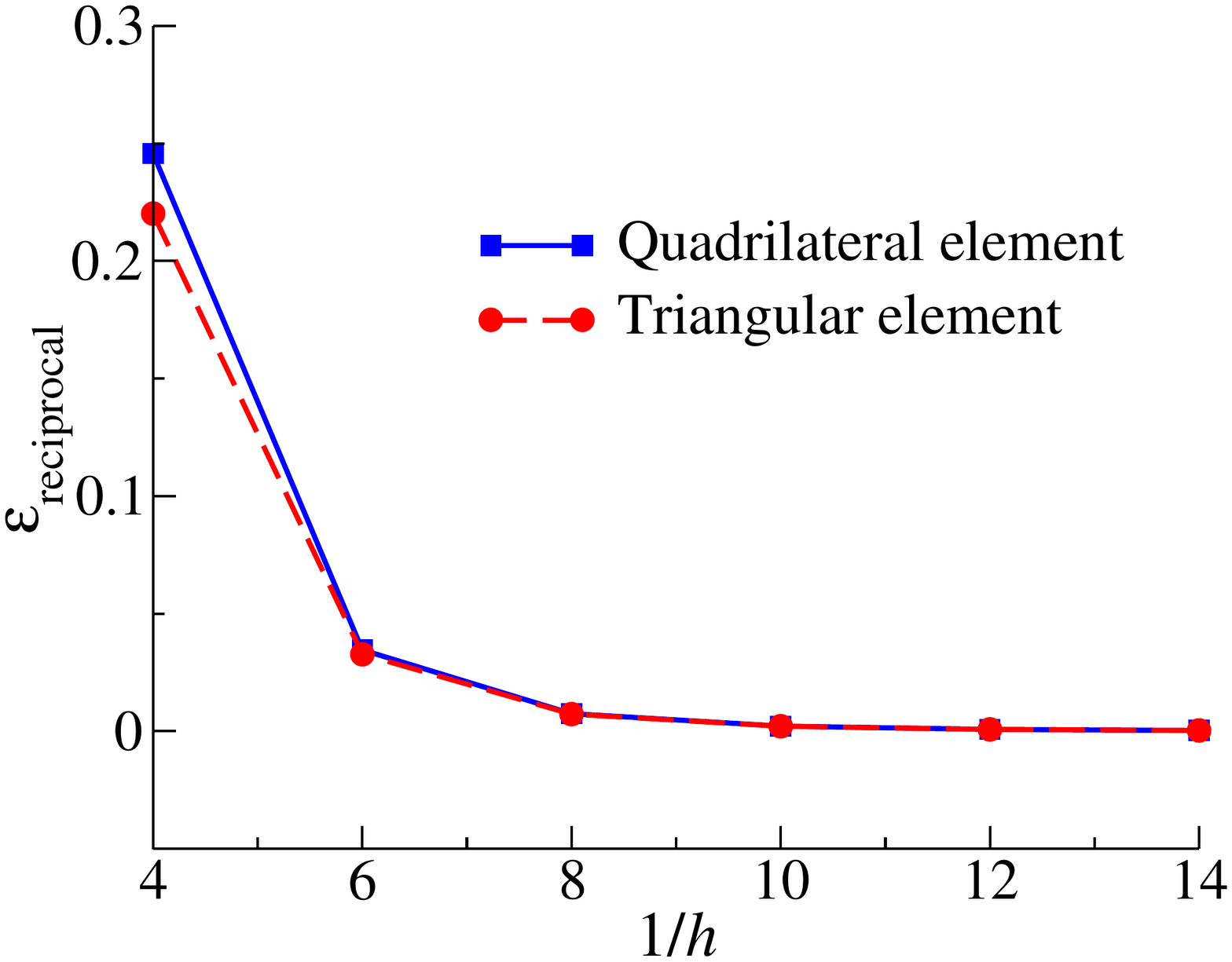}
    \label{Fig:Br_test_prob_Betti}}
  \caption{\textsf{Body force problem:}~This figure shows 
    the variation of $\varepsilon_{\mathrm{reciprocal}}$ with 
    mesh refinement for the Darcy and Darcy-Brinkman 
    models using quadrilateral and triangular elements.
    The parameters used in this problem is provided 
    in Table \ref{Tab:Input_to_COMSOL_for_body_force}.
    The figure show that the numerical error in the 
    reciprocal relation decrease uniformly to zero 
    with mesh refinement for this test problem. 
    \label{Fig:test_prob_Betti}}
\end{figure}

\clearpage 
\newpage 

\let\thefootnote\relax\footnote{Results for 
\textsf{Lid-driven cavity problem}}

\begin{figure}
  \includegraphics[scale=0.35]
  {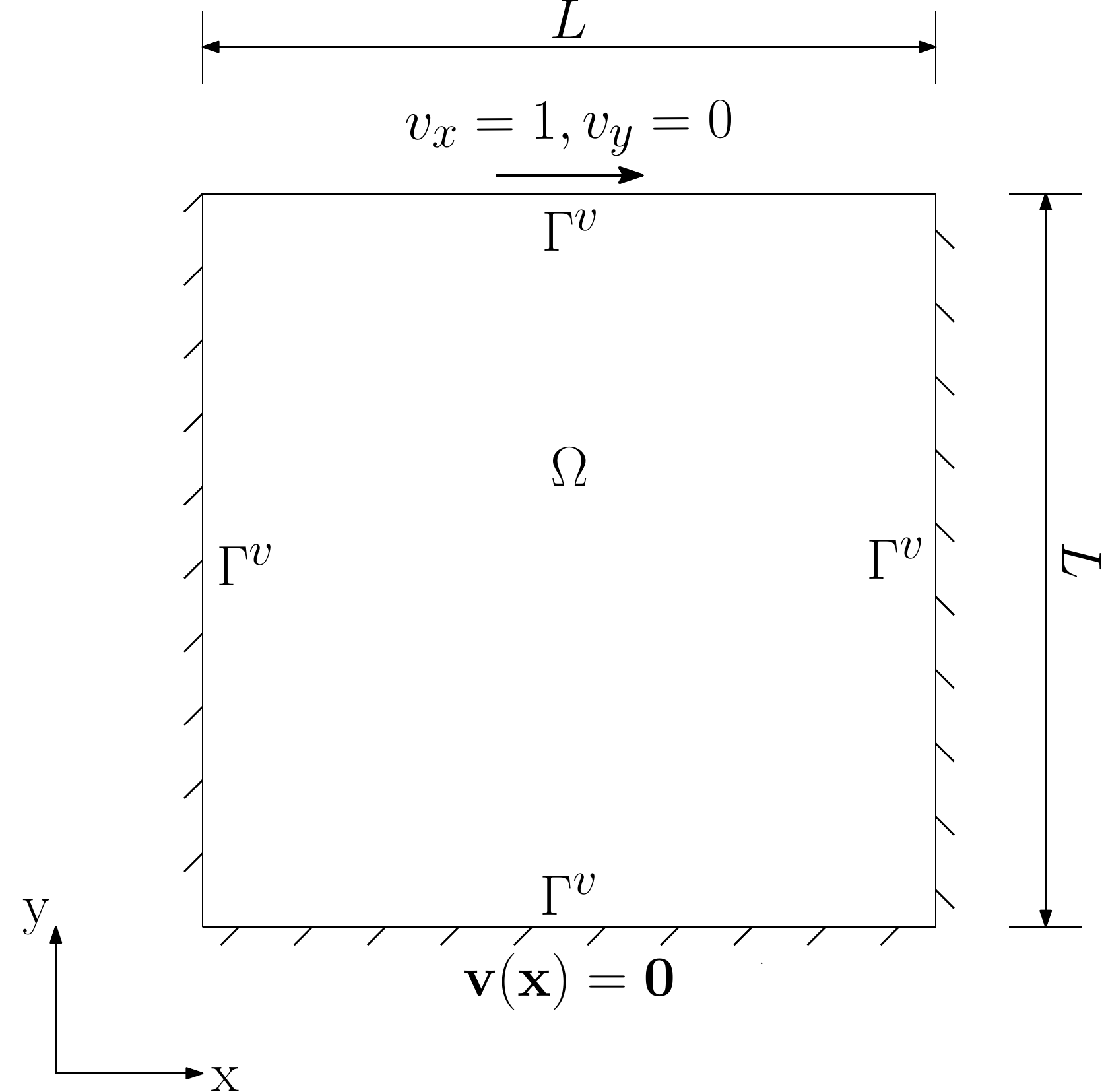}
  \caption{\textsf{Lid-driven cavity problem:}~The 
    computational domain is a unit square. Velocity 
    boundary condition 
    is prescribed on the entire boundary (i.e., 
    $\Gamma^{v} = \partial \Omega$). The prescribed 
    velocity on the top side is $v_{x}=1$ and $v_y 
    = 0$, and the prescribed velocity on the remaining 
    sides of the boundary is zero (i.e., 
    $\mathbf{v}^{\mathrm{p}}(\mathbf{x}) = \mathbf{0}$). 
    Note that this test problem is \emph{not} compatible 
    with Darcy equations. 
    \label{Fig:lid_driven_cavity_problem}}
\end{figure} 
%
\begin{figure}
  \centering
  \subfigure[Uniform structured mesh with $1/h=12$.]
  {\includegraphics[clip,scale=0.72]
    {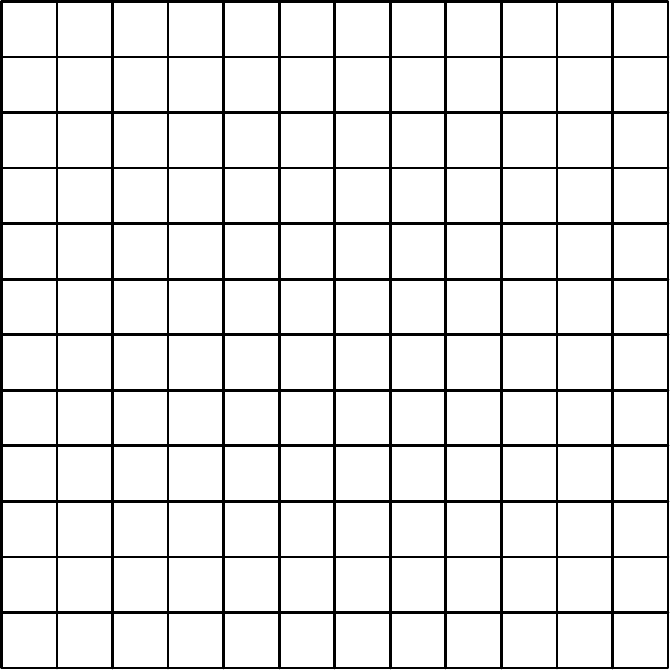}
    \label{Fig:lid_cavity_uni_Q9_4}} 
  \qquad \quad
  \subfigure[Adaptive mesh near the top left corner 
  with $1/h=12$ elsewhere.]
  {\includegraphics[clip,scale=0.86]
    {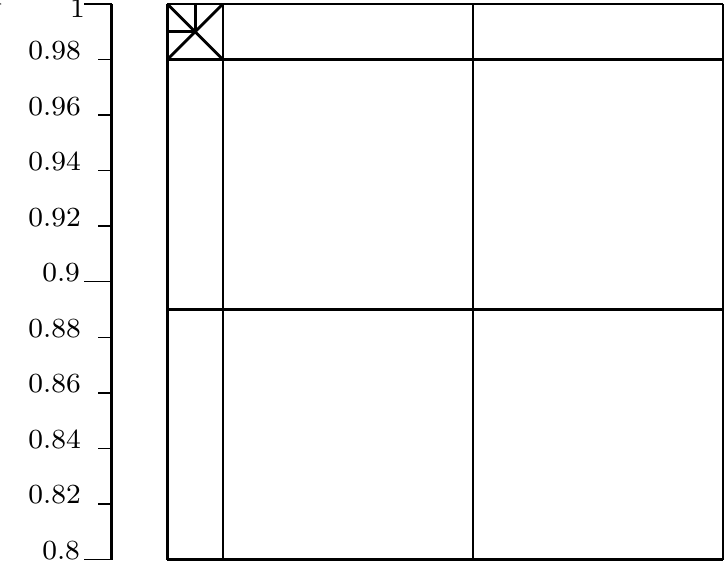}
    \label{Fig:lid_cavityadapt_Q9_4_top}}
  \subfigure[Dissipation vs. $h$ under a hierarchy 
    of uniform structured meshes.]
  {\includegraphics[clip,scale=0.26]
    {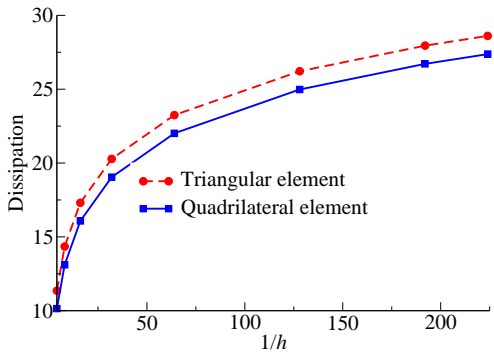}
    \label{Fig:Br_cavity_min_diss}}
  \qquad
  \subfigure[Dissipation vs. $h$ under a hierarchy 
  of adaptive meshes.]
  {\includegraphics[clip,scale=0.3]
    {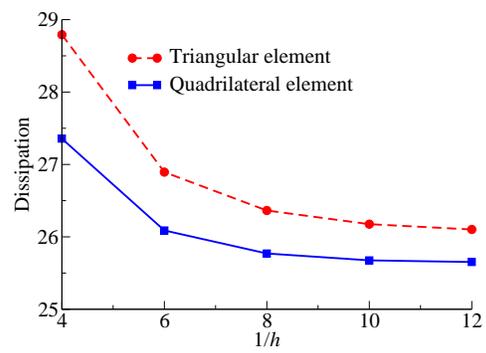}
    \label{Fig:Br_fine_top_cavity_min_diss}}
  \caption{\textsf{Lid-driven cavity problem:}~The 
    top figures show a uniform structured mesh, 
    and an adaptive mesh near the top corners.
    The bottom figures show the variation of total 
    dissipation with mesh refinement for quadrilateral 
    and triangular finite elements. 
    The parameters in 
    this problem are provided in Table 
    \ref{Tab:Input_to_COMSOL_for_lid_cavity}. 
    One should note that the solution exhibits 
    singularities at the top corners. It is 
    evident that the convergence is uniform 
    under both structured and adaptive meshes. 
    However, the dissipation under the structured 
    mesh increased with mesh refinements, whereas 
    the trend is reversed under the adaptive mesh. 
    This is because the adaptive mesh resolves the 
    singularities and removes the associated pollution 
    error. 
    \emph{This figure corroborates one of the 
      main conclusions of the paper that the minimum 
      dissipation theorem can be utilized to identify 
      pollution errors in numerical solutions.} 
      \label{Fig:cavity_min_diss}}
\end{figure}

\newpage 
\clearpage 

\let\thefootnote\relax\footnote{Results for 
\textsf{Pipe bend problem}}

\begin{figure}[h]
  \includegraphics[scale=0.35]
  {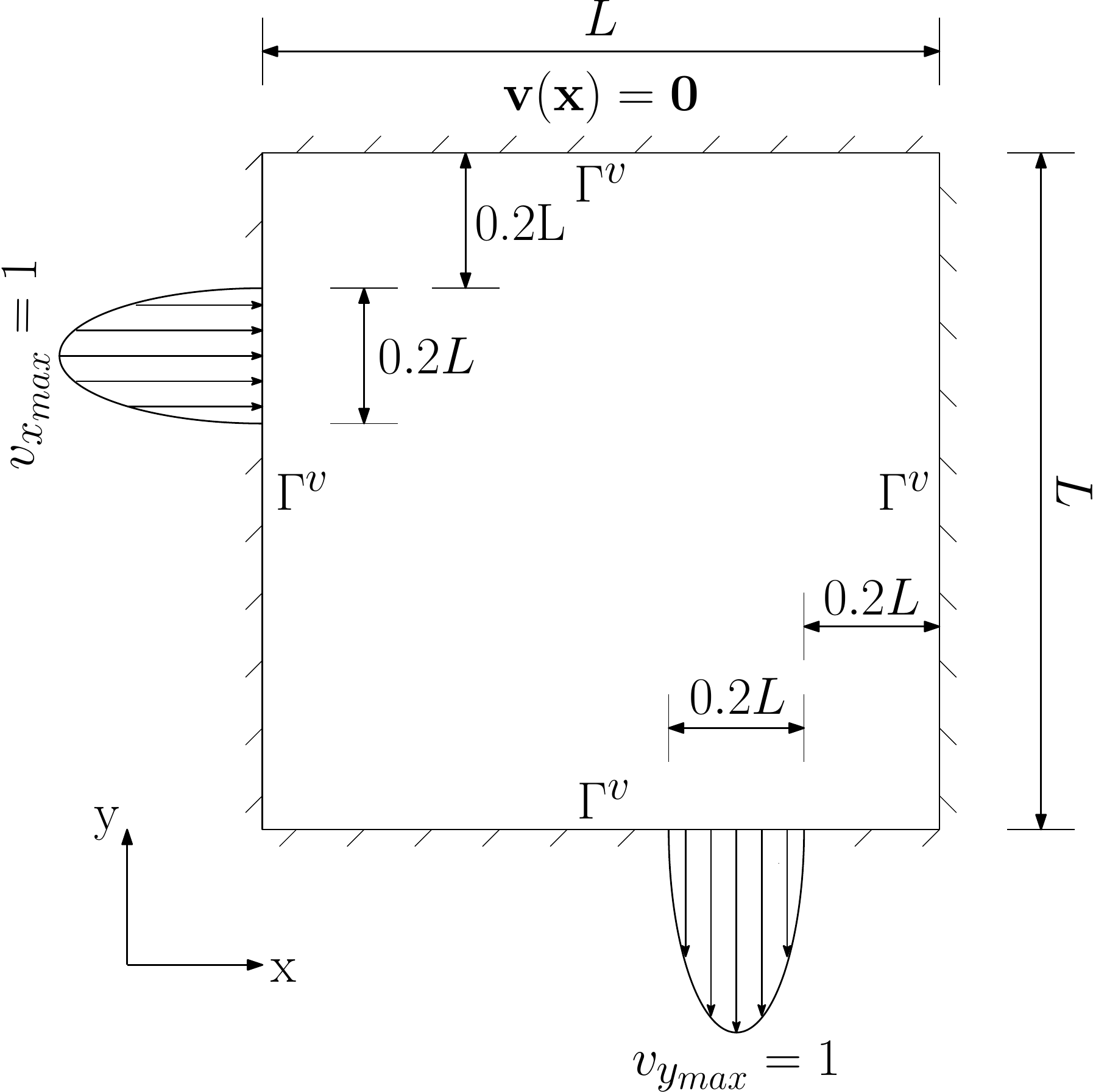}
  \caption{\textsf{Pipe bend problem (velocity 
      boundary condition):}~A pictorial description 
    of the problem. The computational domain $\Omega$ 
    is a unit square. 
    The velocity boundary condition is prescribed 
    on the entire boundary (i.e., $\Gamma^{v} = \partial 
    \Omega$). As indicated in the figure, a parabolic 
    velocity profile with $v_{x_{max}} = 1$ is prescribed 
    on a segment of the left side of the boundary. 
    Similarly, a parabolic velocity profile with 
    $v_{y_{max}} = 1$ is prescribed on a segment of 
    the bottom side of the boundary.  
    Homogeneous velocity is enforced on the 
    remaining parts of the boundary. 
    \label{Fig:pipe_bend}}
\end{figure}

\begin{figure}[h]
  \subfigure[Dissipation for $\alpha=10$.]
  {\includegraphics[scale=0.25]
    {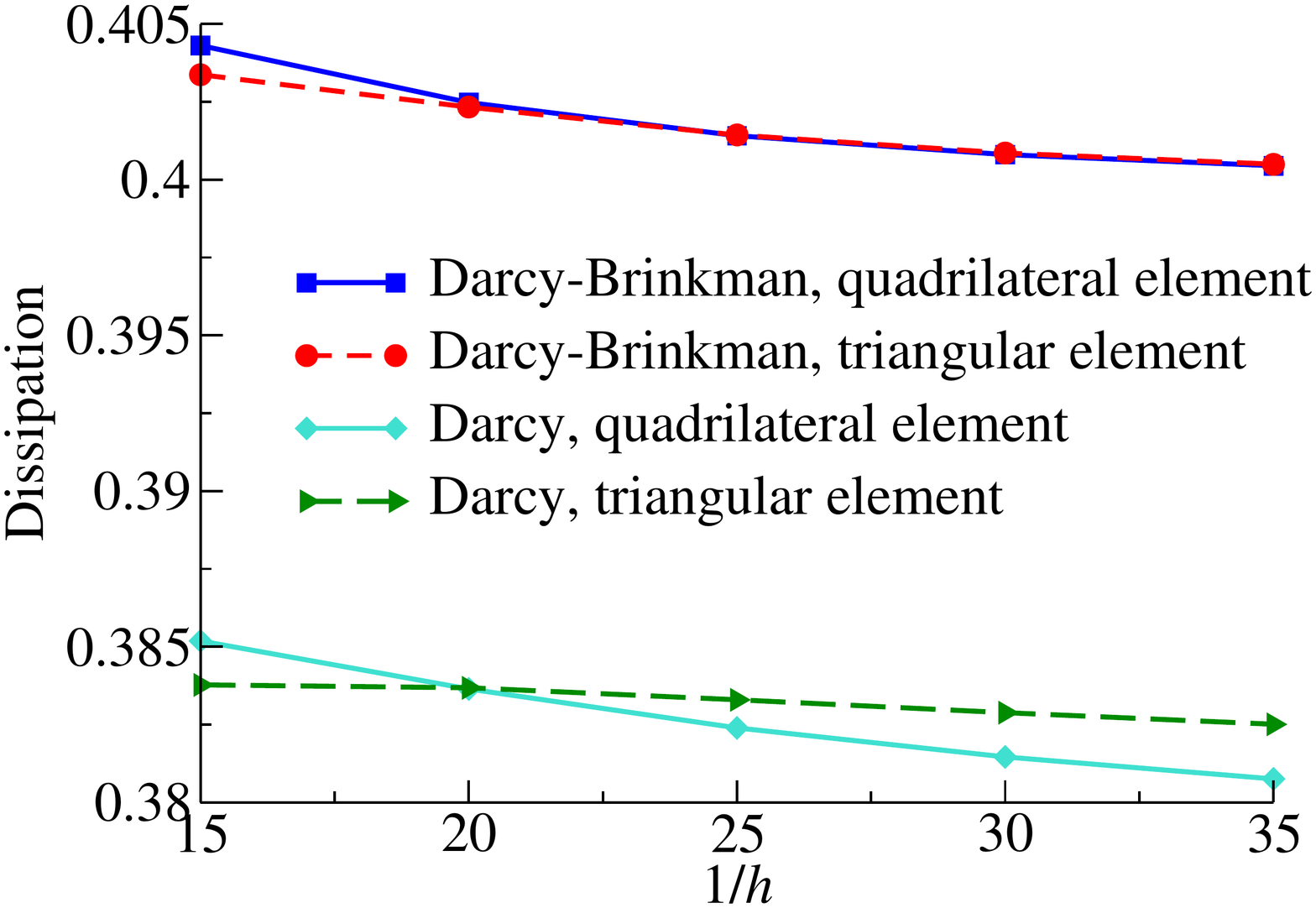}
    \label{Fig:Dr_vs_Br_pipe_min_diss}}
  \qquad
  \subfigure[Total mechanical power for $\alpha=1$.]
  {\includegraphics[scale=0.30]
    {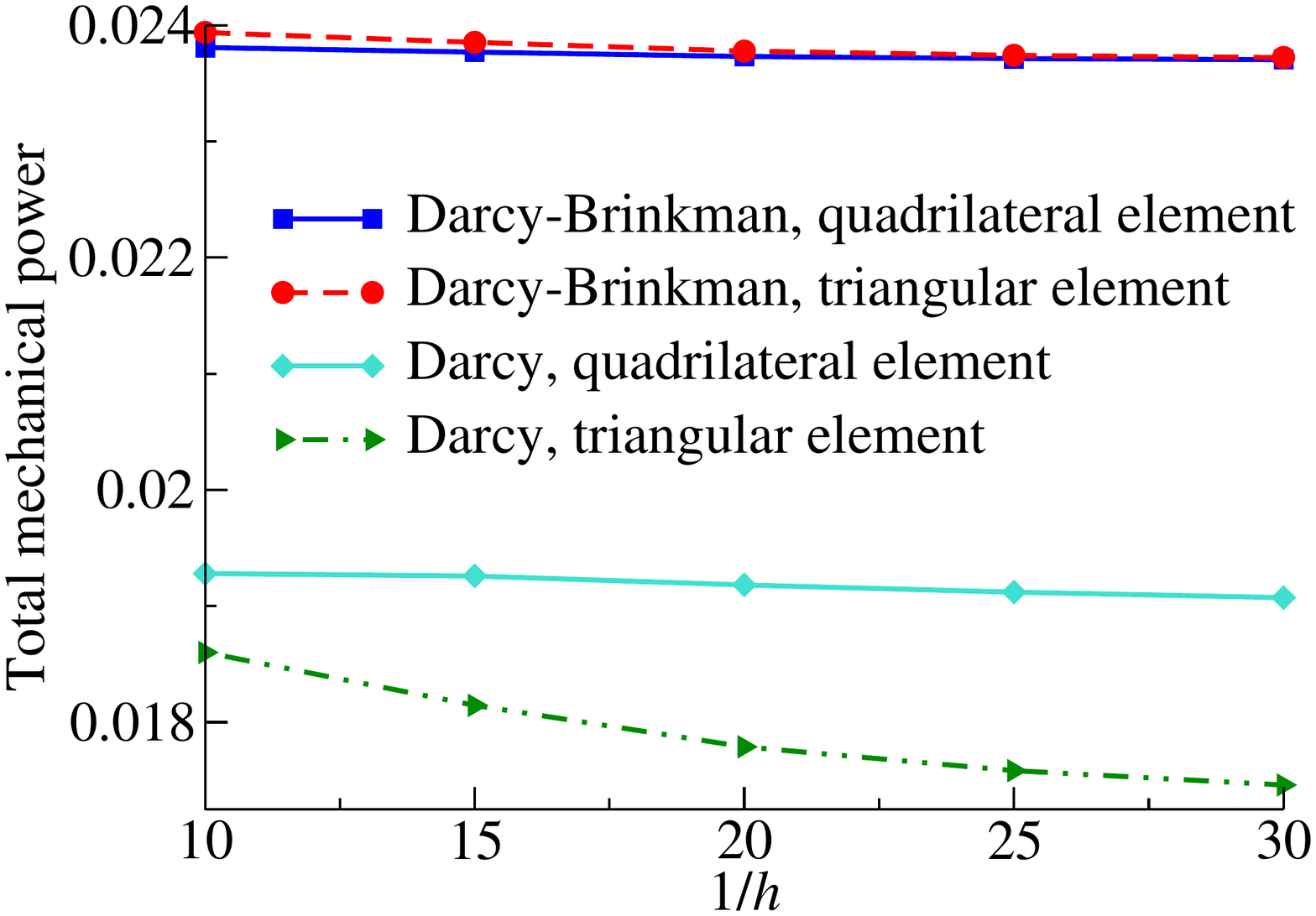}
    \label{Fig:Dr_vs_Br_pipe_min_mech_pw}}
  \caption{\textsf{Pipe bend problem (velocity 
      boundary condition):}~The figure shows that 
    the dissipation and total mechanical power 
    decrease uniformly with mesh refinement for the 
    Darcy and Darcy-Brinkman models. The results are 
    presented for both quadrilateral and triangular 
    elements. 
    The parameters used in this problem are provided in 
    Table \ref{Tab:Input_to_COMSOL_for_2D_pipe_bend} 
    ($\mu=1$ for Darcy and $\mu=0.001$ for Darcy-Brinkman models). 
    Note that the total mechanical power is defined 
    in equation \eqref{Eqn:Brinkman_TMP}. 
    \label{Fig:pipe_bend_diss}}
\end{figure}

\begin{figure}
  \includegraphics[scale=0.35]
  {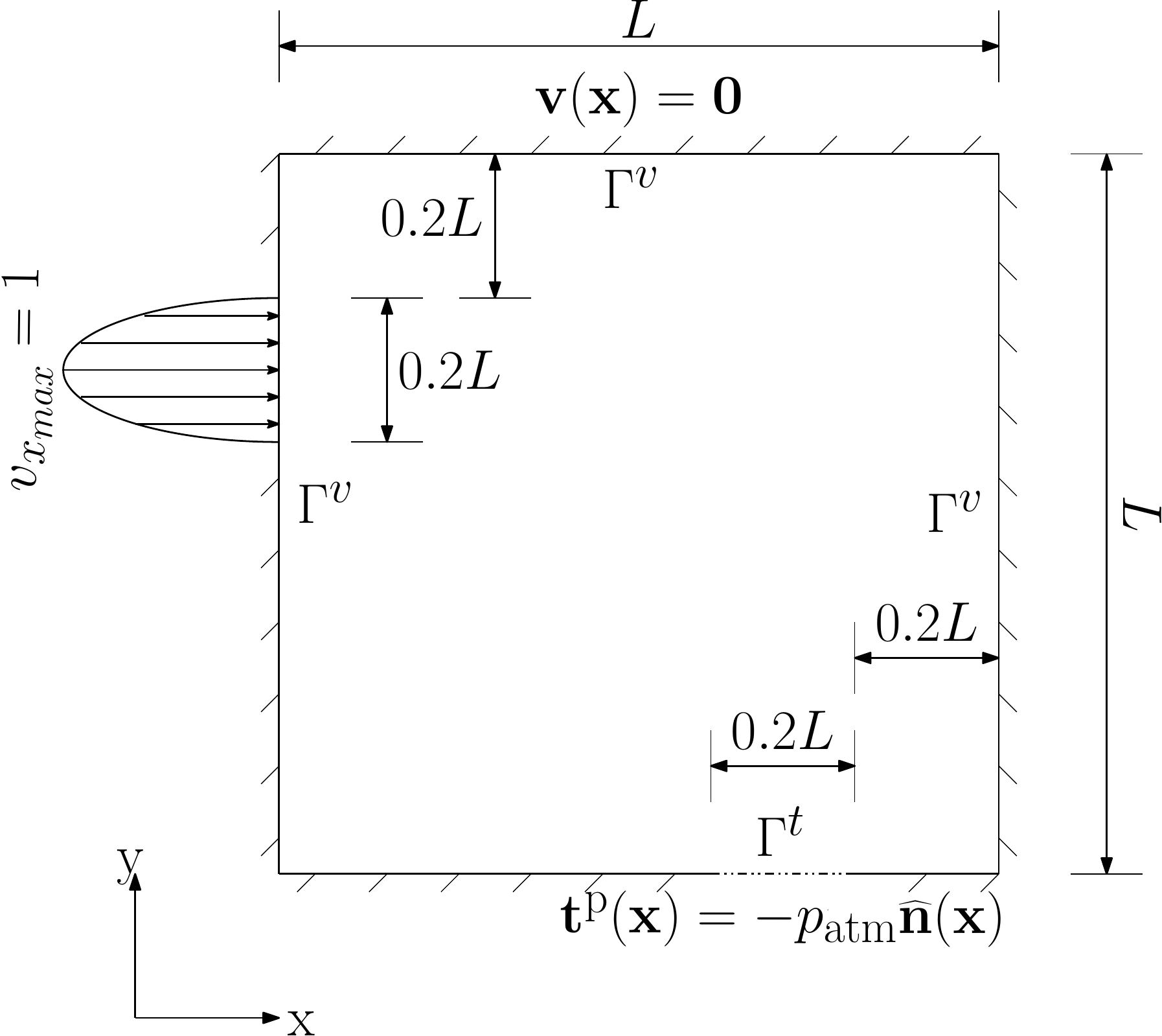} 
  \caption{\textsf{Pipe bend problem (velocity-pressure 
    boundary condition):}~This figure presents a pictorial 
    description of the problem. The computational 
    domain $\Omega$ is a square with $L=1$. 
    The traction boundary condition is $\mathbf{t}^\mathrm{p}
    (\mathbf{x}) = -p_{\mathrm{atm}} \widehat{\mathbf{n}}
    (\mathbf{x}) \; \mathrm{on} \; \Gamma^{t}$. 
    A parabolic velocity profile with ${v}_{{x}_{max}}=1$ 
    is prescribed on a segment of the left side of the 
    boundary, as indicated in the figure. Elsewhere, 
    the velocity is assumed to be zero (i.e., 
    $\mathbf{v}^{\mathrm{p}}(\mathbf{x}) = \mathbf{0}$ for 
    Darcy-Brinkman equations and $v_{n}(\mathbf{x}) = 0$ 
    for Darcy equations). Note that the corners at the 
    outlet are re-entrant corners. 
    \label{Fig:2D_press_pipe_bend}}
\end{figure}

\begin{figure}
  \centering
  \subfigure[Uniform structured mesh with $1/h=15$.]
  {\includegraphics[clip,scale=0.465]
  {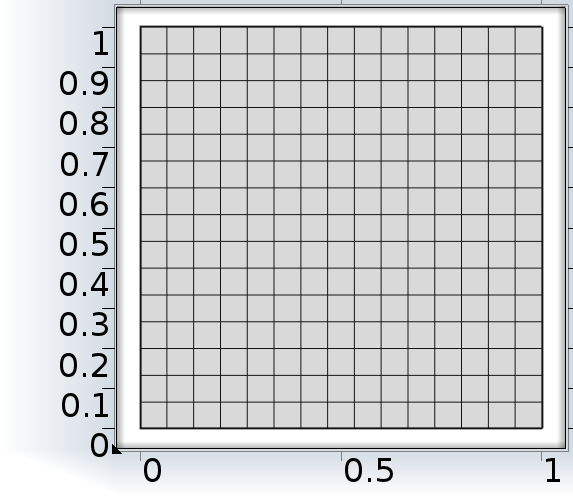}
    \label{Fig:press_pipe_uni_Q9_15}}
  \hspace{1.75cm}
  \subfigure[Adaptive mesh near the outlet with $1/h=15$ 
    elsewhere.]{\includegraphics[clip,scale=0.37]
    {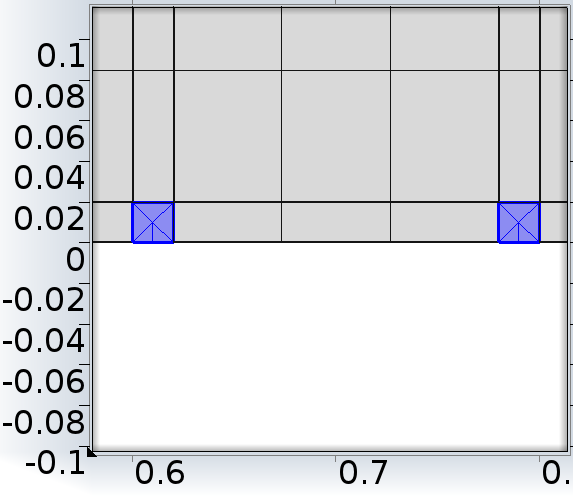}
    \label{Fig:press_pipe_adapt_Q9_15}}
  %
    %
  %
  \subfigure[Darcy-Brinkman model under the uniform structured mesh, 
  $\mu=0.001$.]
  {\includegraphics[scale=0.265]
  {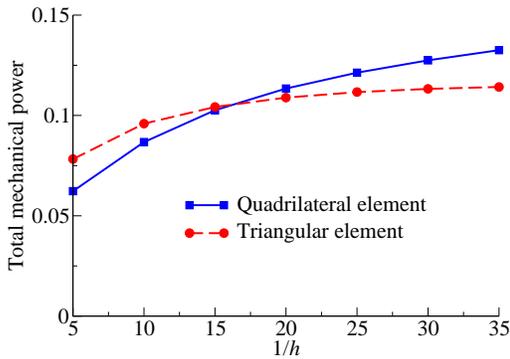} 
    \label{Fig:Br_press_pipe_min_mech_pw}}
  \hspace{.25cm}
  \subfigure[Darcy-Brinkman model under the adaptive mesh, 
  $\mu=0.001$.]
  {\includegraphics[scale=0.265]
  {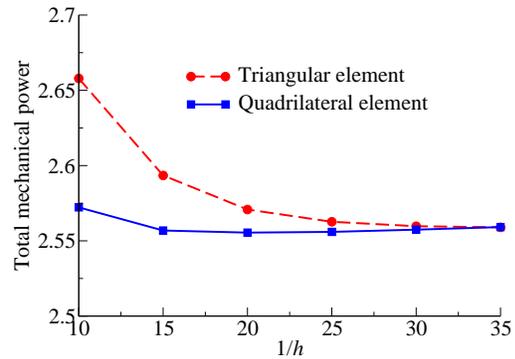} 
    \label{Fig:Br_press_pipe_Adapt_mesh_mech_pw}}
  \caption{\textsf{Pipe bend problem (velocity-pressure 
      boundary condition):}~The top figures show 
    the uniform structured mesh and the adaptive 
    mesh near the corners of the outlet (i.e., 
    $\Gamma^{t}$).
    The bottom figures show the variation of the 
    total mechanical power with mesh refinement 
    under quadrilateral and triangular elements.
    The parameters used in this problem are provided in 
    Table \ref{Tab:Input_to_COMSOL_for_2D_pipe_bend}. 
    The total mechanical power increased uniformly with 
    mesh refinement under the structured mesh, while the 
    total mechanical power decreased uniformly and reached 
    a plateau with mesh refinement under the adaptive mesh.
    The numerical results clearly demonstrate that the 
    total mechanical theorem can be utilized to identify 
    pollution errors due to singularities 
    and to assess whether a particular type of 
    computational mesh is suitable for a given 
    problem. 
    \label{Fig:press_pipe_min_mech_pw}}
\end{figure}

\let\thefootnote\relax\footnote{Results for 
\textsf{Pressure slab problem}}

\begin{figure}
  \includegraphics[scale=0.375]
  {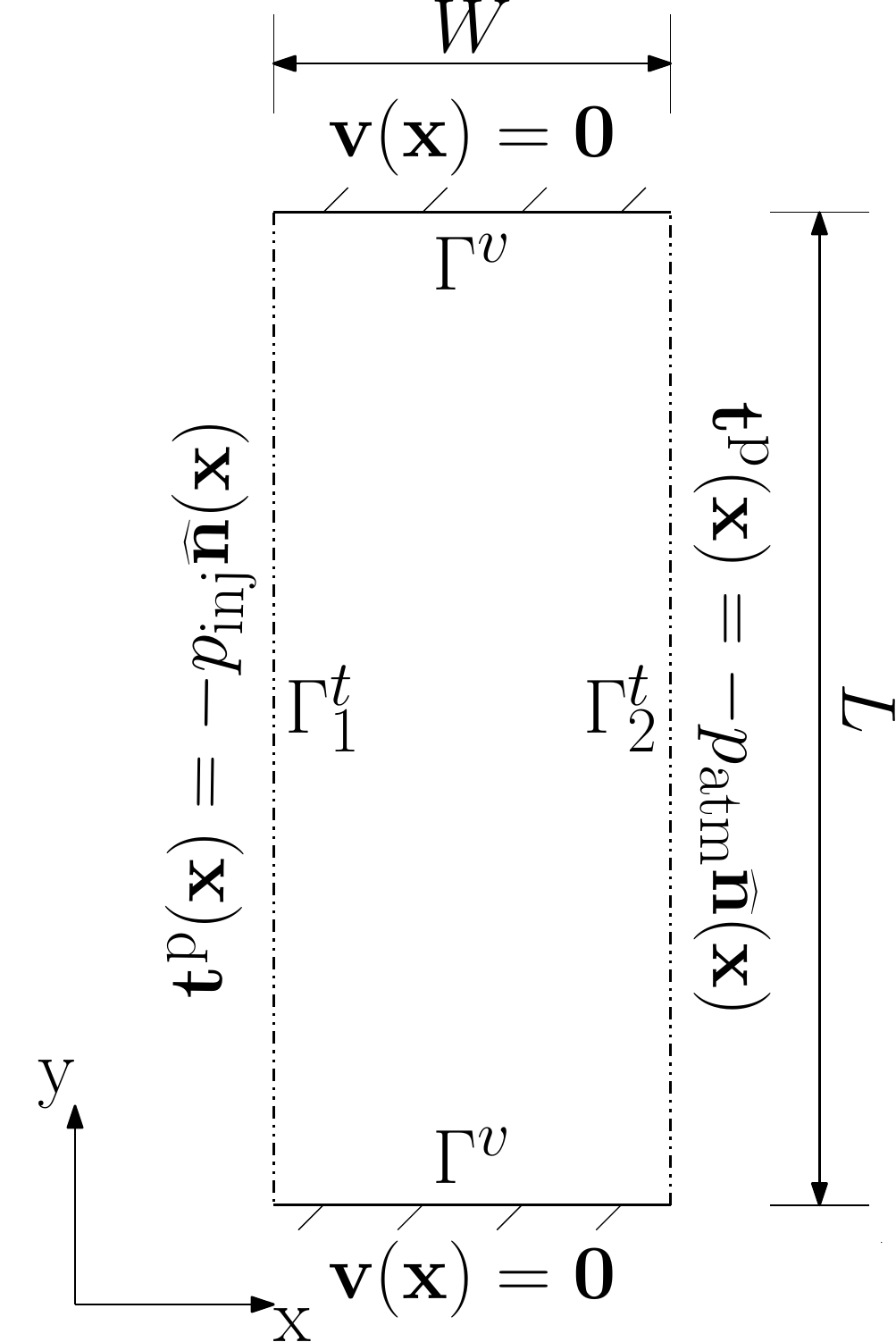}
  \caption{\textsf{Pressure slab problem:}~A pictorial 
    description of the problem. The computational domain 
    is a $W \times L$ rectangle. 
    The traction is prescribed on the left side of the 
    boundary (i.e., $\mathbf{t}^\mathrm{p}(\mathbf{x}) = 
    -p_{\mathrm{inj}} \widehat{\mathbf{n}}(\mathbf{x})$ 
    on $\Gamma^t_1$) and on the right side (i.e., 
    $\mathbf{t}^\mathrm{p}(\mathbf{x}) = -p_{\mathrm{atm}} 
    \widehat{\mathbf{n}}(\mathbf{x})$ on $\Gamma^t_2$).
    Elsewhere, homogeneous velocity is enforced.        
    \label{Fig:press_slab}}
\end{figure}

\begin{figure}
  \includegraphics[scale=0.375]
  {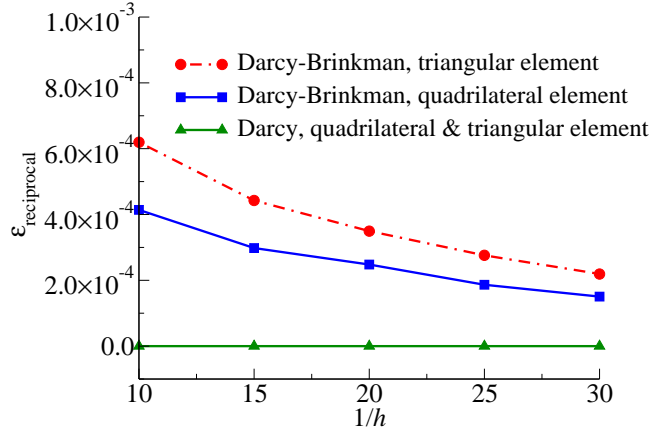}
  \caption{\textsf{Pressure slab problem:}~The figure 
    shows the variation of $\varepsilon_{\mathrm{reciprocal}}$ 
    with mesh refinement for Darcy and Darcy-Brinkman 
    equations using quadrilateral and triangular grids. 
    The parameters in this problem are provided in Table 
    \ref{Tab:Input_to_COMSOL_for_slab}. 
    One can see the error in the numerical results with 
    respect to the reciprocal relation for Darcy equations 
    is very close to zero for all meshes. The corresponding 
    error under Darcy-Brinkman equations decreases uniformly 
    with mesh refinement. 
    \label{Fig:Br_Betti_slab_no_bodyf}}
\end{figure}

\clearpage 
\newpage 

\let\thefootnote\relax\footnote{Results for 
\textsf{Pressure-driven problem}}

\begin{figure}
  \includegraphics[scale=0.35]
  {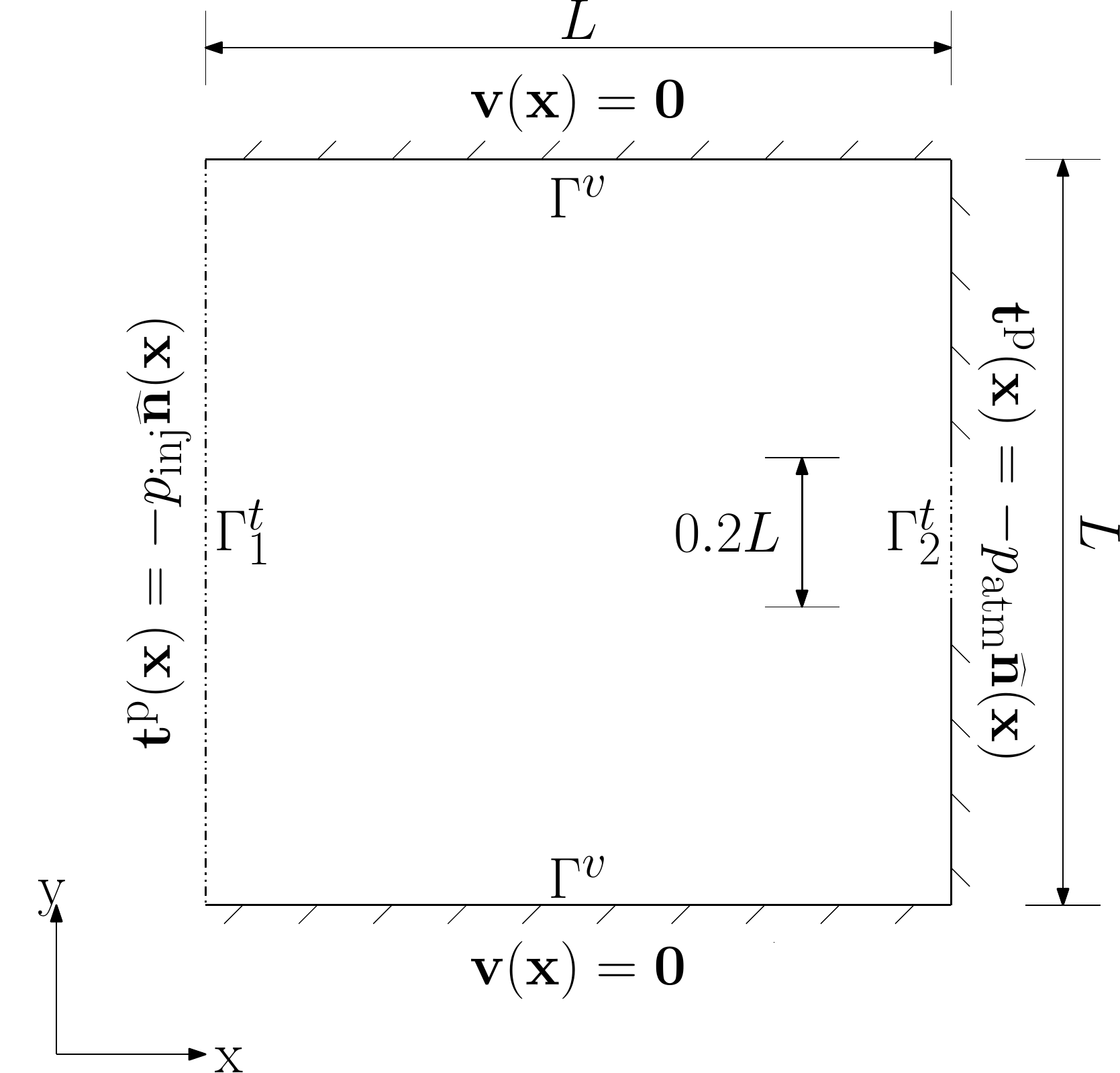}
  \caption{\textsf{Pressure-driven problem:}
  ~A pictorial description of the problem. The 
    computational domain $\Omega$ is a square 
    with $L=1$.
    The traction is prescribed on the left side of the 
    boundary (i.e., $\mathbf{t}^\mathrm{p}(\mathbf{x}) 
    = -p_{\mathrm{inj}} \widehat{\mathbf{n}}(\mathbf{x})$ 
    on $\Gamma^t_1$) and on the middle of right side 
    (i.e., $\mathbf{t}^\mathrm{p}(\mathbf{x}) = 
    -p_{\mathrm{atm}} \widehat{\mathbf{n}}(\mathbf{x})$
    on $\Gamma^t_2$).
    Elsewhere, homogeneous velocity is enforced.
    \label{Fig:press_driven}}
\end{figure}
%
\begin{figure}
	\includegraphics[scale=0.3]
	{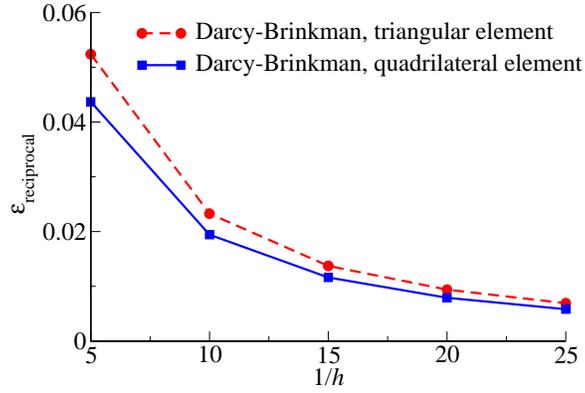}
    \caption{\textsf{Pressure-driven problem:}
    ~Figure shows the variation of $\varepsilon_
    {\mathrm{reciprocal}}$ under the Darcy-Brinkman 
    models using quadrilateral and triangular 
    finite elements. 
    The parameters used in this problem are 
    provided in Table  
    \ref{Tab:Input_to_COMSOL_for_press_driven}. 
    Clearly, the reciprocal theorem can be 
    utilized to obtain information on the 
    numerical performance of the problem that 
    does not possess an analytical solution. 
    \label{Fig:Betti_press_driven}}
\end{figure}
\begin{figure}
  \subfigure[Lid-driven cavity, regular mesh.
  $\omega_{max}=35.641$ and $\omega_{min}=0$.]{
    \includegraphics[scale=0.3,clip]{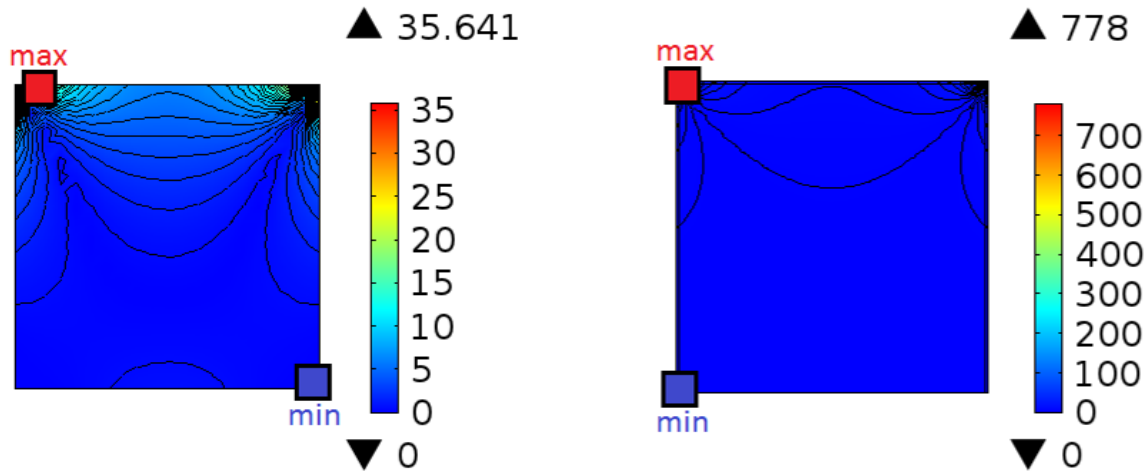}
    \label{Fig:vort_lid_cavity_regular_mesh}}
  \hspace{1.25 cm}
  \subfigure[Lid-driven cavity, adaptive mesh.
  $\omega_{max}=778$ and $\omega_{min}=0$.]{
    \includegraphics[scale=0.3,clip]{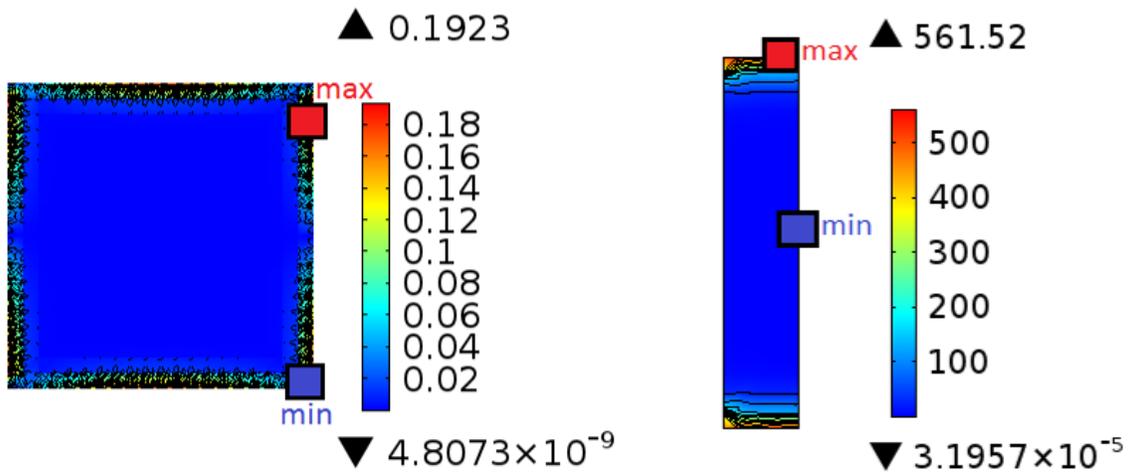}}
  \subfigure[Body force problem.
  $\omega_{max}=0.1923$ and $\omega_{min}=4.8073\times10^{-9}$.]{
    \includegraphics[scale=0.3,clip]{Figures/Vorticity/20x20_Vort_body_f.pdf}}
  \hspace{1. cm}
  \subfigure[Slab problem.
  $\omega_{max}=561.52$ and $\omega_{min}=3.1957\times10^{-5}$.]{
    \includegraphics[scale=0.292,clip]{Figures/Vorticity/20x4_Vort_slab.pdf}
    \label{Fig:vort_slab_prob}}
  \caption{The figure verifies the maximum principle 
    for the vorticity for various two-dimensional 
    problems under the Darcy-Brinkman model. We 
    employed quadrilateral elements with size 
    $1/h=20$. The numerical results corroborate 
    the theoretical predictions given in Theorem 
    \ref{Theorem:Vorticity} for all the test 
    problems we have considered. 
    \label{Fig:Metrics_vorticity_Brinkman}}
\end{figure}

\clearpage
\newpage 

\let\thefootnote\relax\footnote{Results for \textsf{Synthetic reservoir problem}}

\begin{figure}
  \includegraphics[scale=0.5,clip]{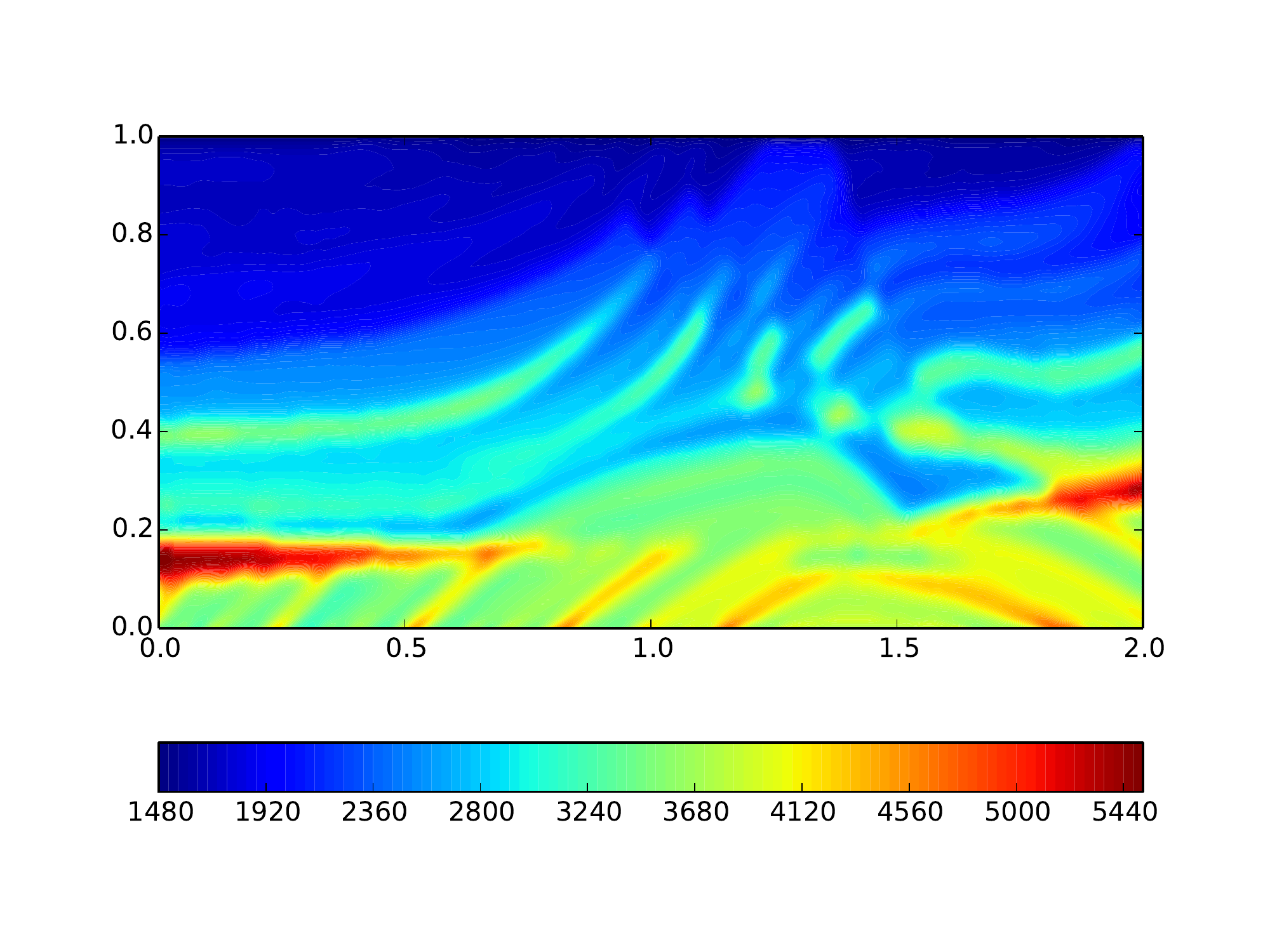}
  \caption{\textsf{Synthetic reservoir problem:}~This 
    figure shows the contours of (smooth) Marmousi 
    dataset \citep{Marmousi_Rice}. The dataset 
    provides values on a $384 \times 122$ grid 
    which we have scaled to our rectangular 
    computational domain of $L = 2$ and $H 
    = 1$. This spatially heterogeneous dataset 
    is widely used as a benchmark dataset in 
    reservoir modeling. \label{Fig:Brinkman_Marmousi}}
\end{figure}
%
\begin{figure}
  \includegraphics[scale=.75]{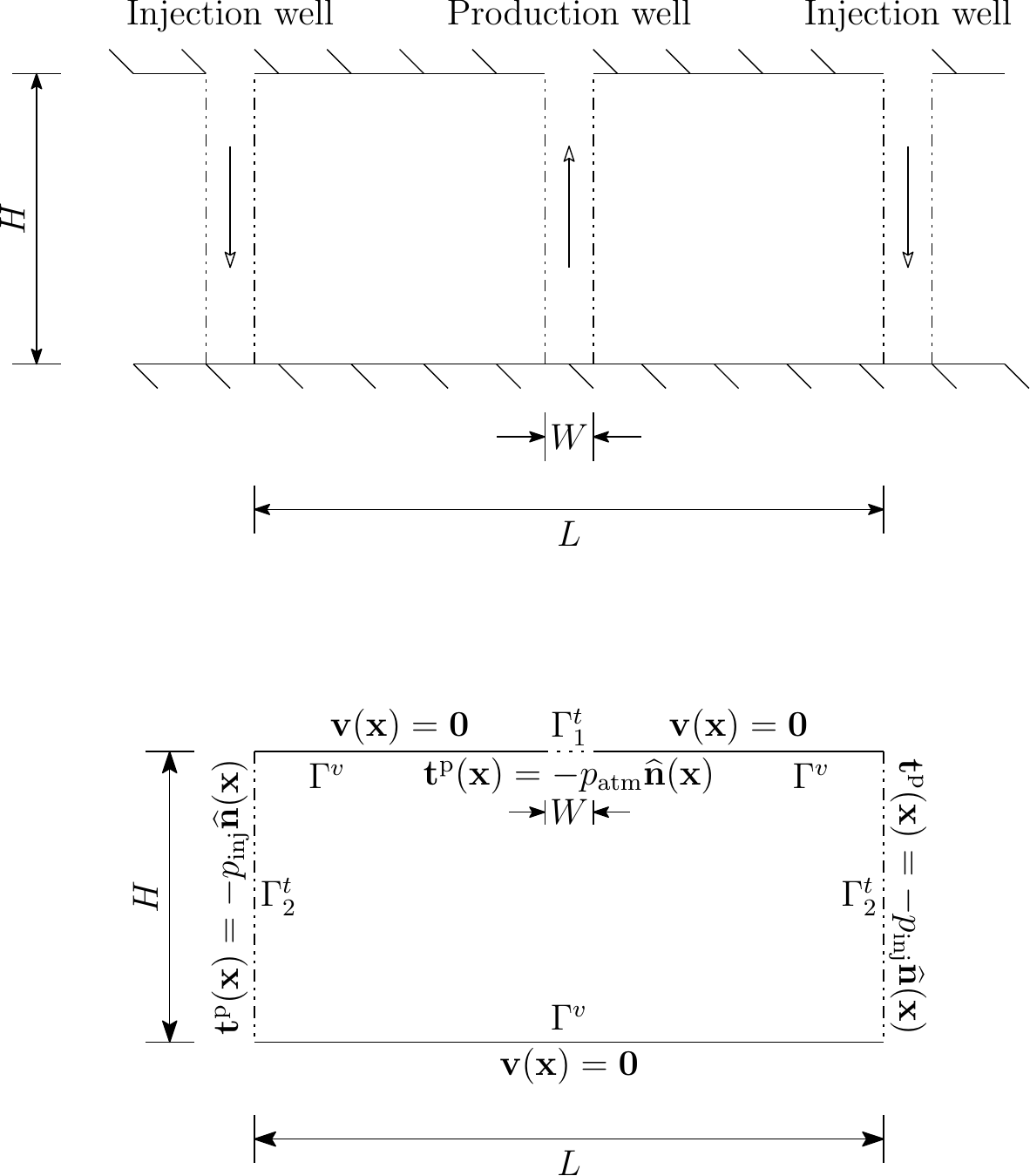}
  \caption{\textsf{Synthetic reservoir problem:}~This 
    figure provides a pictorial description of the test 
    problem. The domain of the problem is a rectangle 
    of size $H \times L$. The injection pressure is 
    prescribed on the left and right boundaries (i.e., 
    $\mathbf{t}^\mathrm{p}(\mathbf{x}) = -p_{\mathrm{inj}} 
    \widehat{\mathbf{n}}(\mathbf{x}) \; \mathrm{on} 
    \; \Gamma_2^{t}$), and the atmosphere pressure is 
    prescribed on the middle of top side (i.e., 
    $\mathbf{t}^\mathrm{p}(\mathbf{x}) = -p_{\mathrm{atm}} 
    \widehat{\mathbf{n}}(\mathbf{x}) \; \mathrm{on} 
    \; \Gamma_{1}^{t}$). The prescribed velocity on 
    the remaining parts of the boundary is zero (i.e., 
    $\mathbf{v}^{\mathrm{p}}(\mathbf{x}) = \mathbf{0}$ for 
    the Darcy-Brinkman model and $v_{n}(\mathbf{x}) = 0$ 
    for the Darcy model).\label{Fig:reservoir}}
\end{figure}
%
\begin{figure}
  \subfigure[Uniform structured mesh with $1/h=20$.]
            {\includegraphics[clip,scale=0.375]
              {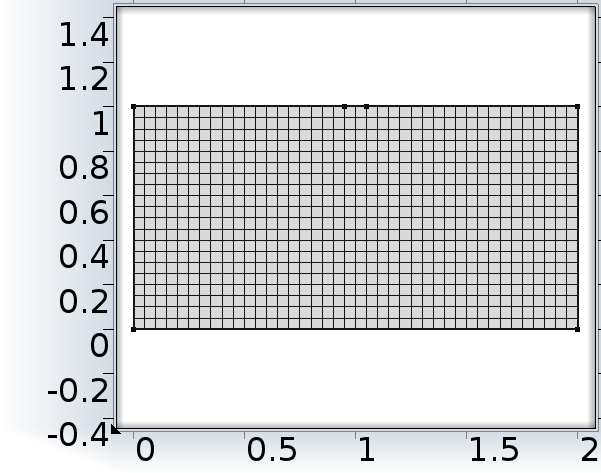}
              \label{Fig:reservoir_uni_Q9_40x20}}
            \hspace{2.cm}
            \subfigure[Adaptive mesh near the production well with $1/h=20$ 
              elsewhere.]{\includegraphics[clip,scale=0.375]
              {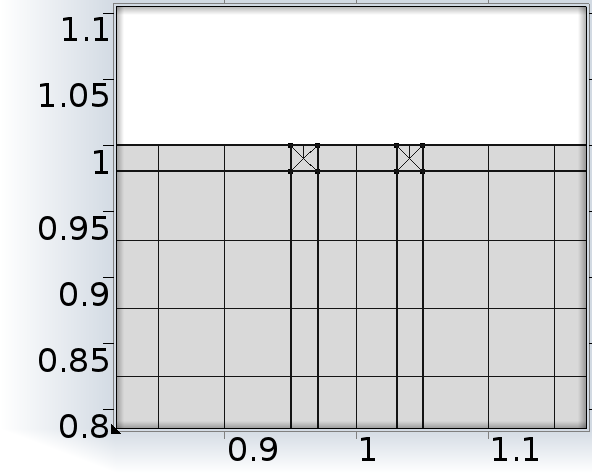}
              \label{Fig:reservoir_adapt_Q9_40x20}}
            \subfigure[Variation of $\varepsilon_{\mathrm{reciprocal}}$ under 
              a hierarchy of uniform structured meshes.]
                      {\includegraphics[scale=0.35]
                        {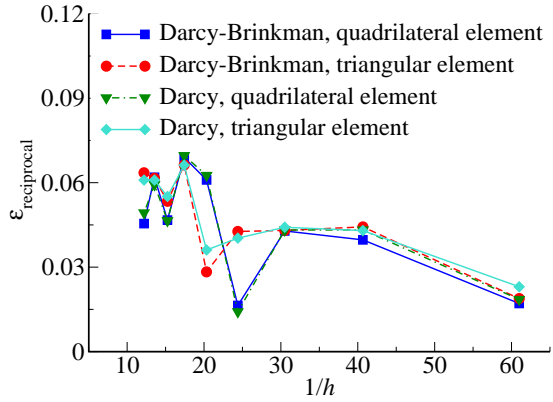}
                        \label{Fig:Betti_regular_mesh_Marm_smooth_reservoir}}
                      \subfigure[Variation of $\varepsilon_{\mathrm{reciprocal}}$ 
                        under a hierarchy of adaptive meshes.]
                                {\includegraphics[scale=0.43]
                                  {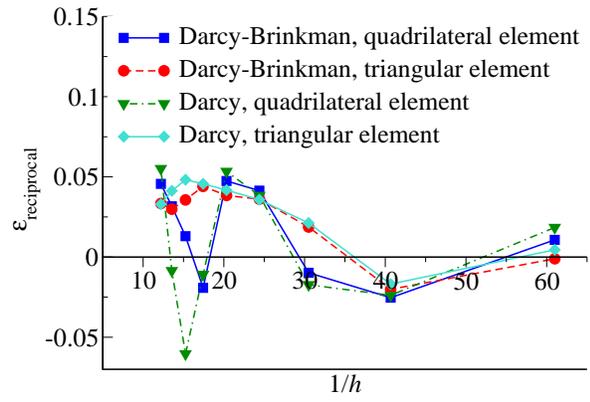}
                                  \label{Fig:Betti_adapt_mesh_Marm_smooth_reservoir}}
  \caption{\textsf{Synthetic reservoir problem:} 	
    The figures show the variation of $\varepsilon_
    {\mathrm{reciprocal}}$ with $h$ for the Darcy and 
    Darcy-Brinkman models using structured and 
    adaptive meshes.
    The parameters in this problem are provided in 
    Table \ref{Tab:Input_to_COMSOL_for_Marmousi}.
    The figures clearly show that the errors in the 
    reciprocal relation are larger under the uniform 
    structured meshes. This can be attributed to 
    pollution errors due to singularities at the 
    corners of the production wells. 
    \label{Fig:Betti_Marm_smooth_reservoir}}
\end{figure}

\begin{figure}
  {\includegraphics[scale=0.4]
    {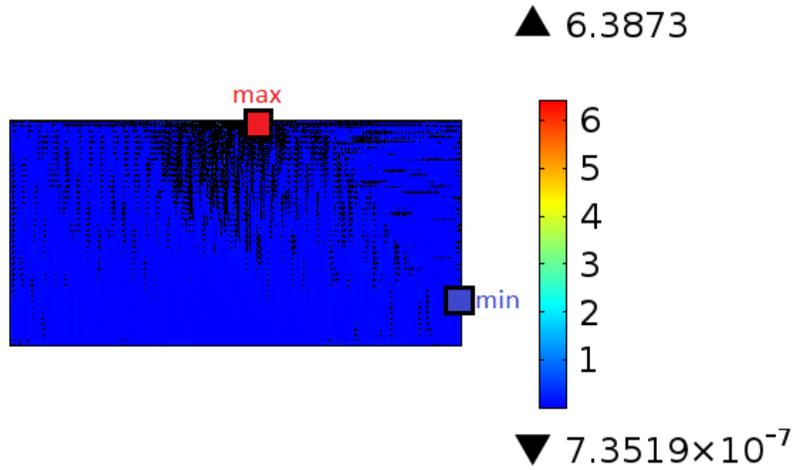}}
  \caption{\textsf{Synthetic reservoir problem:}~The 
    figure shows the magnitude of vorticity for 
    two-dimensional Darcy-Brinkman equations using 
    quadrilateral elements. The parameters used in this problem 
    are provided in Table 
    \ref{Tab:Input_to_COMSOL_for_Marmousi}.
    This figure verifies the maximum principle 
    for the vorticity for the synthetic reservoir 
    problem using Marmousi dataset.     
    \label{Fig:Vort_Marm_smooth_reservoir}}
\end{figure}

\clearpage
\newpage


\begin{figure}
  \subfigure[Quadrilateral element size of $1/h=20$ for $dt=0.01$.]{
    \includegraphics[scale=0.375,clip]{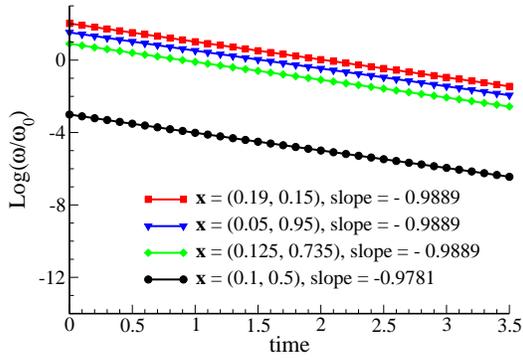}
    \label{Fig:vort_press_slab}}
  \hspace{2.0cm}
  \subfigure[Initial vorticity for quadrilateral element size of $1/h=20$.]{
    \includegraphics[scale=0.25,clip]{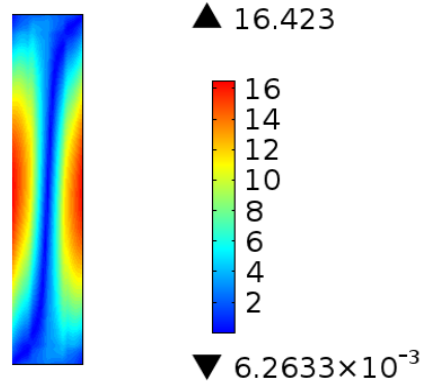}
    \label{Fig:vort_Body_F}}
  \subfigure[Mesh refinement for quadrilateral elements, at 
  $\mathbf{x}=(0.05, 0.95)$ and $dt=0.01$.]{
    \includegraphics[scale=0.325,clip]{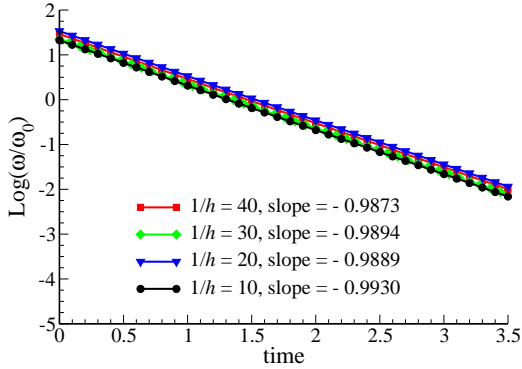}
    \label{Fig:vort_press_slab_mesh_refinement}}
    \quad
    \subfigure[Time refinement for $1/h=20$ at $\mathbf{x}=(0.15, 0.35)$.]{
    \includegraphics[scale=0.325,clip]{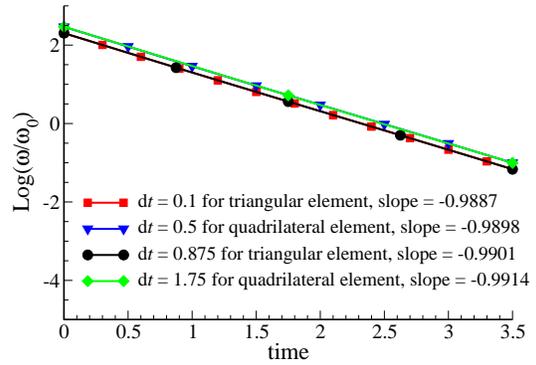}
    \label{Fig:vort_press_slab_time_step}}  
    \caption{\textsf{Pressure slab problem:}~The figure 
      verifies the theoretical results for the vorticity 
      under transient Darcy equations. The results show 
      that the slope of $\log(\frac{\omega}{\omega_0})$ 
      for various spatial points in the computational 
      domain are close to $-\frac{\alpha}{\rho}=-1$.
      The figure also shows the slope is follows 
      the theoretical prediction for various mesh 
      refinements $h$ and time-steps $dt$. The 
      non-dimensional parameters used in this 
      numerical experiment are provided in 
      Tables \ref{Tab:Input_to_COMSOL_for_slab} 
      and \ref{Tab:IC_for_slab}.
      \label{Fig:Vorticity_trans_Darcy_slab}}
\end{figure}

\begin{figure}
	\subfigure[Lid-driven cavity, structured mesh.]{
   \includegraphics[scale=0.3,clip]{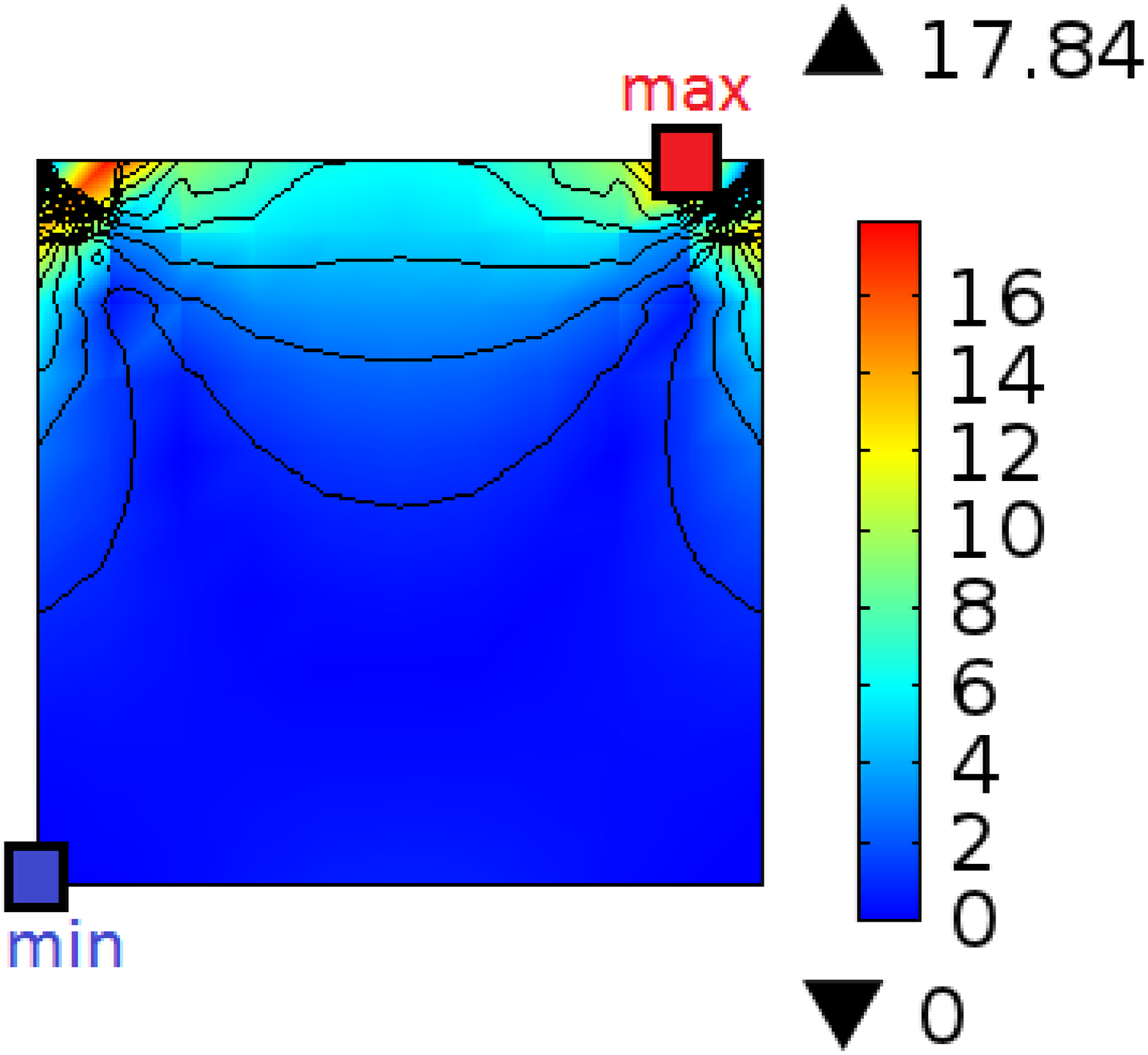}}
  \hspace{.5cm}
  \subfigure[Lid-driven cavity, adaptive mesh.]{
   \includegraphics[scale=0.3,clip]{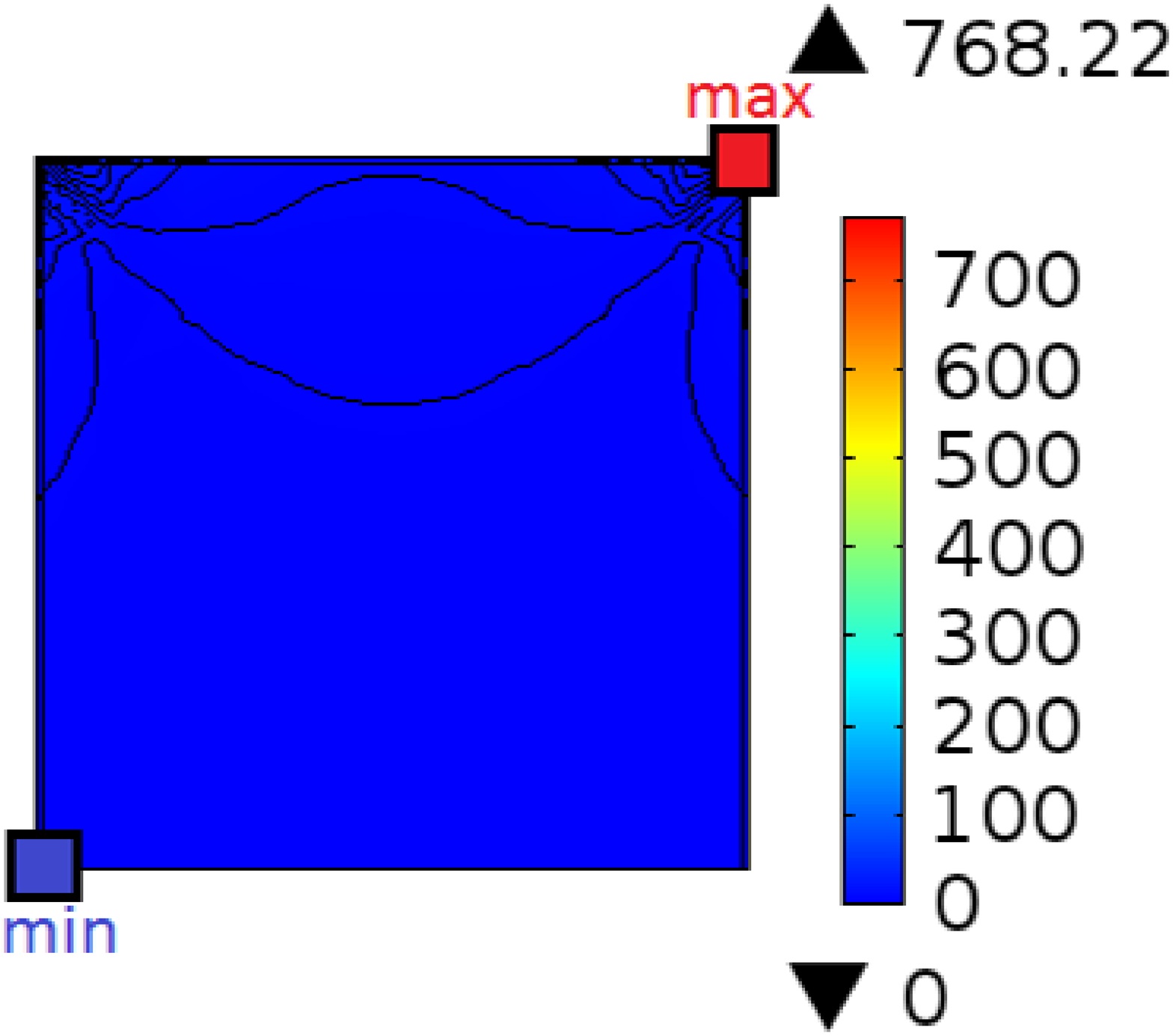}}
  \subfigure[Body force problem.]{
    \includegraphics[scale=0.3,clip]{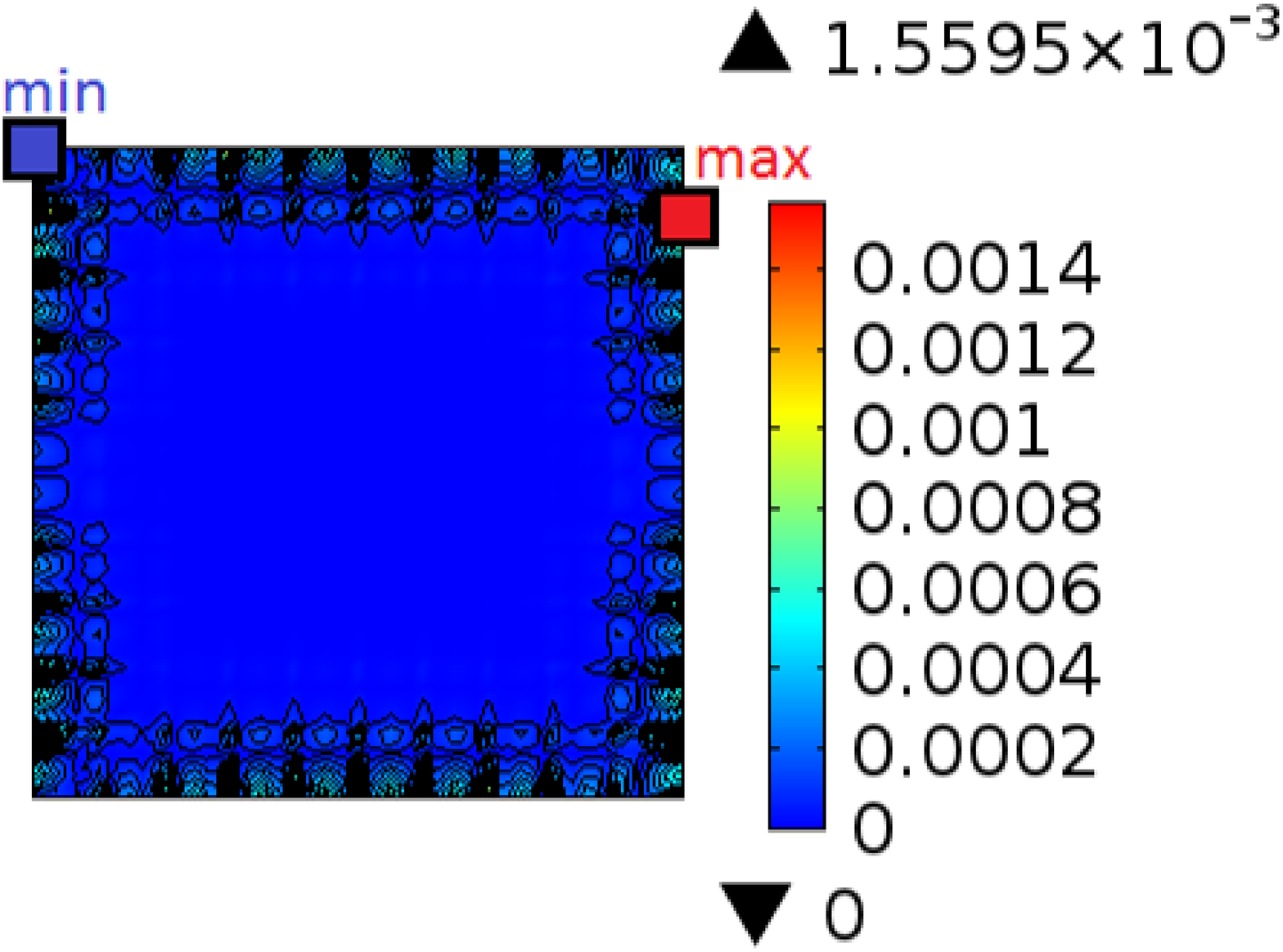}}
  \hspace{1.65cm}
  \subfigure[Pressure slab problem.]{
    \includegraphics[scale=0.24,clip]{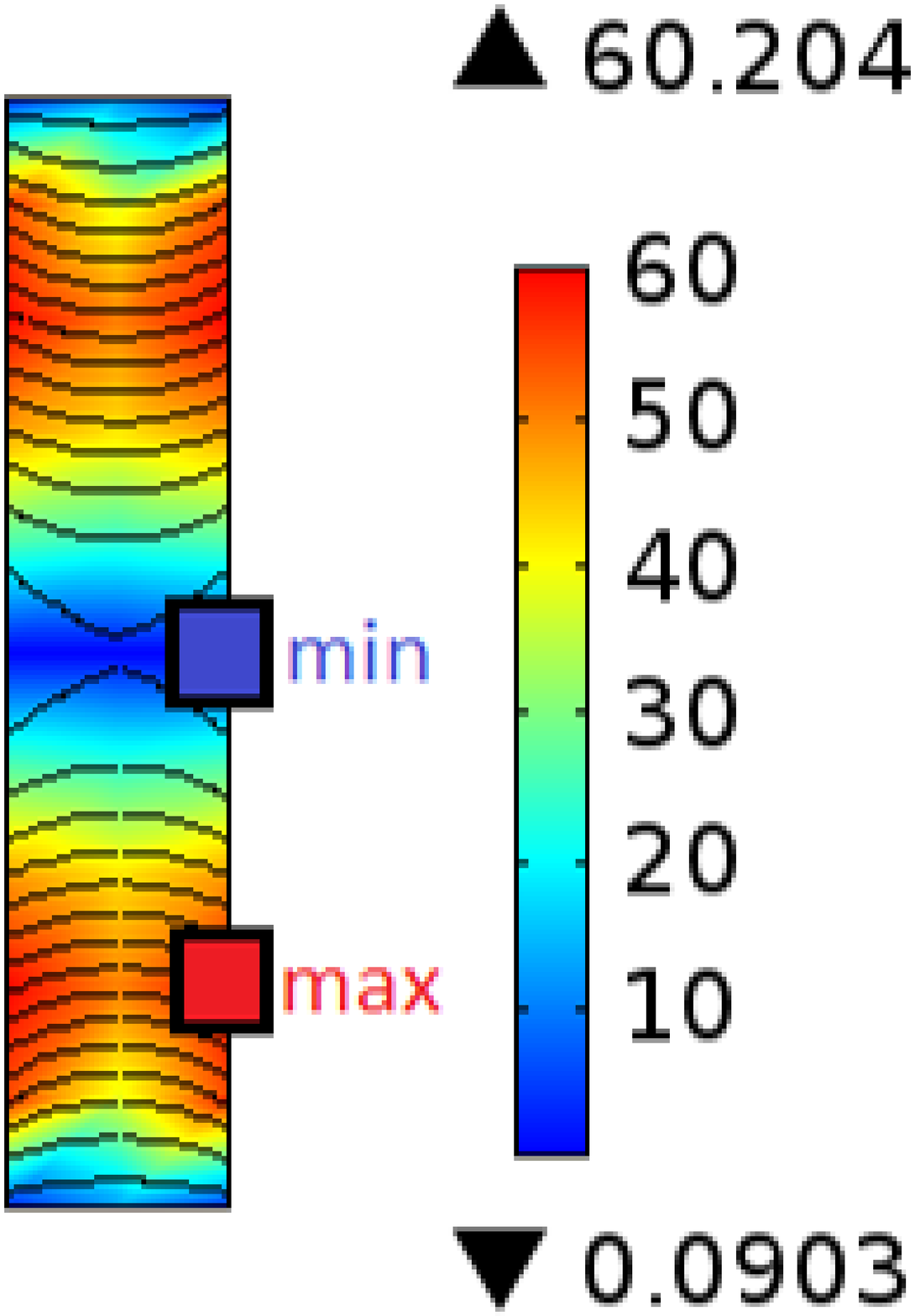}}
  \caption{The figure verifies the maximum principle 
    for the vorticity under transient Darcy-Brinkman 
    equations for various two-dimensional problems. 
    The numerical results satisfy the maximum principle 
    for all the test problems. The results are reported 
    at $t = 1$ and for time-step $dt=0.5$ (which is 
    chosen arbitrarily). The results are presented 
    for the adaptive and structured meshes based on 
    quadrilateral elements with size $1/h = 10$. We 
    employed the backward Euler time-stepping scheme 
    in the numerical experiment. 
    \label{Fig:Vorticity_trans_Brinkman}}
\end{figure}

\end{document}